\documentclass[reqno,12pt]{amsart}
\usepackage{amsmath,amssymb,amsthm}
\usepackage{bm,bbm,mathtools}
\usepackage{comment,enumitem,multicol}
\usepackage{expl3}

\usepackage{array,booktabs,longtable}
\usepackage{caption,graphicx}
\usepackage[table]{xcolor}

\newlength{\margins}
\usepackage[
  top=\margins,
  bottom=\margins,
  left=\margins,
  right=\margins
]{geometry}

\usepackage[
  setpagesize=false,
  bookmarks=true,
  bookmarksdepth=section,
  bookmarksnumbered=true,
  colorlinks=false,
  hidelinks,
  pdftitle={Harder's conjecture and Hermitian automorphic forms},
  pdfkeywords={Hermitian automorphic forms,
    Harder's conjecture,
    Galois representation}
]{hyperref}

\allowdisplaybreaks
\makeatletter
\@namedef{subjclassname@2020}{\textup{2020} Mathematics Subject Classification}
\makeatother

\title{Harder's conjecture and Hermitian automorphic forms}

\author[H. Katsurada]{Hidenori KATSURADA}
\address[Hidenori Katsurada]{Department of Mathematics, Hokkaido University, Kita 10, Nishi 8,
Kita-ku, Sapporo, Hokkaido 060-0810, Japan;
and Muroran Institute of Technology, Mizumoto 27-1, Muroran, Hokkaido 050-8585, Japan}
\email{hidenori@muroran-it.ac.jp}

\author[N. Takeda]{Nobuki TAKEDA}
\address[Nobuki Takeda]{Department of Mathematics, Graduate School of Science, Kyoto University, Kitashirakawa Oiwake-cho, Sakyo-ku, Kyoto 606-8502, Japan}
\email{takeda.nobuki.z04@kyoto-u.jp}

\subjclass[2020]{11F55, 11F33, 11F46, 11F67, 11F80}
\keywords{Hermitian automorphic forms, Harder's conjecture, Galois representations}

\theoremstyle{definition}
\newtheorem{dfn}{Definition}[section]
\newtheorem{rem}[dfn]{Remark}

\theoremstyle{plain}
\newtheorem{prop}[dfn]{Proposition}
\newtheorem{conj}[dfn]{Conjecture}
\newtheorem{lem}[dfn]{Lemma}
\newtheorem{thm}[dfn]{Theorem}
\newtheorem{cor}[dfn]{Corollary}

\DeclareMathOperator{\tr}{Tr}

\ExplSyntaxOn
\clist_const:Nn \c_alphabet_clist
  {
    A,B,C,D,E,F,G,H,I,J,K,L,M,N,O,P,Q,R,S,T,U,V,W,X,Y,Z,
    a,b,c,d,e,f,g,h,i,j,k,l,m,n,o,p,q,r,s,t,u,v,w,x,y,z
  }

\cs_new:Npn \makealphabet #1 #2
  {
    \clist_map_inline:Nn \c_alphabet_clist
      { \exp_args:Nc \newcommand { #1 ##1 } { #2 { ##1 } } }
  }

\makealphabet{bb}{\mathbb}
\makealphabet{bf}{\mathbf}
\makealphabet{cal}{\mathcal}
\makealphabet{frak}{\mathfrak}

\clist_map_inline:nn
  {
    alg,Asai,Aut,BC,Cent,Cl,cont,cris,diag,disc,dR,End,
    Ext,Fil,Frob,Gal,GL,GSp,GU,Her,hol,Hom,HT,id,Ind,Lie,Lift,M,
    ord,PGL,PGSp,pr,proj,PSL,rad,Res,Sat,sh,SL,SO,Sp,sph,
    Spin,St,Sym,Tame,U,ur,vol
  }
  { \exp_args:Nc \newcommand { #1 } { \mathrm { #1 } } }

\ExplSyntaxOff

\newcommand{\bsk}{\boldsymbol{k}}
\newcommand{\typeIII}{\mathrm{I\!I\!I}}
\newcommand{\leftexp}[2]{{\vphantom{#2}}^{#1}{#2}}
\providecommand{\adj}[1]{{#1^*}}
\providecommand{\GU}[1]{\mathrm{GU}_{#1}}

\renewcommand{\Im}{\mathrm{Im}}

\numberwithin{equation}{section}
\makeatletter
\def\@startsection#1#2#3#4#5#6{%
  \if@noskipsec \leavevmode \fi
  \par \@tempskipa #4\relax
  \@afterindenttrue
  \ifdim \@tempskipa <\z@
    \@tempskipa -\@tempskipa
    \@afterindentfalse
  \fi
  \if@nobreak
    \everypar{}%
  \else
    \addpenalty\@secpenalty
    \addvspace\@tempskipa
  \fi
  \@ifstar
    {\@dblarg{\@sect{#1}{\@M}{#3}{#4}{#5}{#6}}}%
    {\@dblarg{\@sect{#1}{#2}{#3}{#4}{#5}{#6}}}%
}
\makeatother

\begin{document}

\begin{abstract}
  Let \(k\geq 4\) and \(j\geq 2\) be integers with \(j\) even.
  Harder's conjecture predicts a congruence between a primitive elliptic Hecke cusp form of weight \(2k+j-2\) and a degree 2 vector-valued Siegel Hecke cusp form of weight \({\det}^{k}\Sym^{j}\).
  In this paper,
  we prove this conjecture under several explicit arithmetic hypotheses on a congruence prime and an auxiliary imaginary quadratic field.
  We construct Hermitian spin lifts from degree 2 Siegel cusp forms to Hermitian cusp forms on \(\U_{2,2}\),
  and use congruences between Hermitian Klingen--Eisenstein lifts and Hermitian cusp forms to obtain the congruence predicted by Harder's conjecture.
\end{abstract}

\maketitle

\tableofcontents

\section{Introduction}
\label{sec:intro}

Harder's conjecture predicts congruences between elliptic modular forms and Siegel modular forms of degree two.
Let \(k\geq 3\) and let \(j>0\) be even.
For a primitive elliptic cusp form \(f\in S_{2k+j-2}(\SL_2(\bbZ))\) and a prime ideal \(\frakp\) of \(\bbQ(f)\) dividing the algebraic critical value \(L^{\alg}(k+j,f)\),
the conjecture predicts a Siegel cusp eigenform \(F\in\calS_{{\det}^{k}\Sym^{j}}(\Sp_2)\) and a prime ideal \(\frakP\) above \(\frakp\) such that
\[
  L_\ell(X,F,\Spin) \equiv L_\ell(X,f)(1-\ell^{k-2}X)(1-\ell^{k+j-1}X)\pmod{\frakP}
\]
for every prime \(\ell\).
This conjecture goes back to Harder \cite{Harder2008Congruence}.
We use the formulation given in \cite[Conjecture~3.7]{Atobe2023HarderI}.
We also use the non-congruence method developed in \cite{Atobe2023HarderII},
in which residual extension classes arising from Galois representations are excluded by Selmer-group arguments.

There has been considerable work on Harder's conjecture and related congruence problems.
Ibukiyama related Harder's conjecture to Shimura-type correspondences and half-integral weight forms \cite{Ibukiyama2008ShimuraHarder,Ibukiyama2014ShimuraHarder}.
Chenevier--Lannes developed a different approach through automorphic forms arising from even unimodular lattices and Kneser neighbors of the Niemeier lattices \cite{ChenevierLannes2019}.
Using Arthur's classification,
they proved,
among other applications,
a congruence predicted by Harder.
More generally,
Bergstr\"om and Dummigan formulated Eisenstein congruences for split reductive groups in a general automorphic framework \cite{BergstromDummigan2016Eisenstein}.
At paramodular level,
Fretwell formulated an analogue of Harder's conjecture and provided numerical evidence at prime levels \cite{Fretwell2018Paramodular};
see also \cite{Fretwell2017Generic} for a representation-theoretic existence result.
Dummigan--Pacetti--Rama--Tornar\'ia subsequently proved several examples of these paramodular congruences by using quinary forms and a Jacquet--Langlands type correspondence \cite{Dummigan2024Quinary}.
Atobe--Chida--Ibukiyama--Katsurada--Yamauchi formulated a stronger version of Harder's conjecture in terms of a Klingen--Eisenstein lift of the Duke--Imamo\u{g}lu--Ikeda lift and a lift from \(\Sp_2\),
and proved it in several cases with even \(k\) \cite{Atobe2023HarderI}.
Their sequel \cite{Atobe2023HarderII} proves further cases by combining explicit congruence calculations with Galois-theoretic and Selmer-theoretic arguments.
For odd \(k\),
Katsurada--Lee proved Harder-type congruences in some cases by using Miyawaki lifts \cite{KatsuradaLee2026HarderMiyawaki}.

In \cite[\S~8]{Dummigan2014Eisenstein}, Dummigan formulated a Hermitian analogue of the Eisenstein congruence occurring in Harder's conjecture,
in terms of congruences between Hermitian Klingen--Eisenstein series and cusp forms on \(\U_{2,2}\).
He further observed that such congruences, potentially obtainable by means of differential operators and pullback formulas,
could provide an approach to Harder's conjecture.
He also isolated the main remaining issue: one must prove that the cuspidal automorphic representation of \(\U_{2,2}\) arising from the Hermitian congruence lies in the image of the functorial lift from \(\SO(3,2)\simeq\PGSp_2\).
The present paper carries out this approach under explicit arithmetic hypotheses.
The required congruence between Hermitian Klingen--Eisenstein series and cusp forms is provided by \cite{Takeda2026Congruences},
while Mok's endoscopic classification and Selmer-group arguments are used to establish the necessary descent and to identify the resulting Hermitian cusp form as a Hermitian spin lift.

In this paper we work in the range \(j\geq2\) and \(k\geq4\).
No parity condition is imposed on \(k\),
and the argument treats even and odd \(k\) uniformly in this range.

The restriction \(k\geq4\) is imposed only to avoid a boundary case in the use of Mok's classification \cite{Mok2015Endoscopic}.
When \(k=3\),
the relevant Hermitian weight becomes scalar-valued,
\({\det}^{j/2+2}\boxtimes{\det}^{j/2+2}\).
Although the infinitesimal character at infinity remains regular,
the corresponding Arthur parameters may involve a non-trivial \(\SL_2\)-factor.
Thus the case \(k=3\) requires a separate discussion in the classification and descent steps.
Since the Hermitian Klingen--Eisenstein congruence theorem used below also includes this scalar-valued boundary case,
the case \(k=3\) should be accessible by a minor modification of the arguments.
We impose \(k\geq4\) in order to avoid this additional case distinction.

The aim of this paper is to derive a conditional Harder-type congruence from congruences for Hermitian automorphic forms.
We fix an imaginary quadratic field \(E\) and consider the quasi-split unitary group \(\U_{2,2}\) attached to \(E/\bbQ\).
The field \(E\) is auxiliary.
It may be chosen flexibly,
subject to the explicit arithmetic conditions in the main theorem.
In \cite{Takeda2026Congruences},
congruences between Hermitian Klingen--Eisenstein lifts and Hermitian cusp forms were obtained from congruence primes of algebraic standard \(L\)-values.
In the case relevant to Harder's conjecture,
this gives a Hermitian cusp eigenform \(G\) on \(\U_{2,2}\) satisfying \(G\equiv_{ev}[f]^2_{j+4}\pmod{\frakP}\).
The main problem is to determine when \(G\) is the Hermitian spin lift of a Siegel cusp eigenform \(F\).
If \(G=\Lift_E(F)\),
then the Hermitian congruence can be translated into the spinor \(L\)-polynomial congruence predicted by Harder's conjecture.

Our main result is the following.
For an imaginary quadratic field \(E\),
let \(\calO_E\) and \(D_E\) denote its ring of integers and discriminant,
respectively,
and
put \(h_E=\#\Cl(E)\).
We regard an elliptic modular form as a Hermitian automorphic form on \(\U_{1,1}\) through the embedding \(\iota_E\) defined in Section~\ref{subsec:automorphic-forms}.
We also use the set \(\calG_0^{(2)}\) of ideal class representatives for \(\U_{2,2}\) introduced in Section~\ref{subsec:herm-cong}.

\begin{thm}[Main theorem]
  Let \(k \geq 4\) and \(j \geq 2\) be integers with \(j\) even.
  Let \(f\) be a primitive elliptic cusp form of weight \(2k+j-2\).
  Choose a sufficiently large number field \(K\) containing \(\bbQ(f)\) and the Hecke fields of all Hecke eigenforms in \(\calS_{{\det}^{k}\Sym^{j}}(\Sp_2)\).
  Assume that there exist a prime ideal \(\frakp\) of \(\bbQ(f)\),
  an imaginary quadratic field \(E\),
  and a prime ideal \(\frakP\) of \(K\) lying above \(\frakp\),
  such that the following conditions are satisfied.
  \begin{enumerate}[itemsep=3pt]
    \item The rational prime \(p=p_\frakp\) satisfies \(p>2k+j-2\).

    \item We have \(p\nmid D_E h_E\).

    \item \(f\) is not congruent modulo \(\frakp\) to any other Hecke cusp eigenform of weight \(2k+j-2\).

    \item \(\frakp\) divides \(L^{\alg}(k+j,f)\).

    \item There exist \(\gamma_0\in\calG_0^{(2)}\) and \(S_0\in\Her_2(E)_{>0}\) such that
          \[
            v_\frakp\!\left(A_{[f]^2_{j+4}}(\gamma_0,S_0)\right) = -v_\frakp\!\left(L^{\alg}(k+j,f)\right).
          \]
          Here the valuation of the vector-valued Fourier coefficient is understood as in Section~\ref{subsec:herm-cong}.

    \item After a suitable choice of a lattice,
          the image of \(\Gal_{\bbQ(\zeta_{p^\infty})}\) under the Galois representation over \(K_\frakP\) attached to \(f\) contains \(\SL_2(\calO_\frakP)\).

    \item For every rational prime \(q\mid D_E\), we have \(q^{j+2}\not\equiv 1\pmod{\frakp}\).

    \item \(\frakP\) divides neither \(\zeta(-j-1)\) nor \(L(-j,\chi_E)\).

    \item \(\frakP\) divides neither \(L^{\alg}(k+j,f\otimes\chi_E)\) nor \(L^{\alg}(k+j-1,f\otimes\chi_E)\).
  \end{enumerate}
  Then there exists a Hecke cusp eigenform \(F\in\calS_{{\det}^k\Sym^j}(\Sp_2)\) such that
  \[
    L_\ell(X,F,\Spin) \equiv L_\ell(X,f)(1-\ell^{k+j-1}X)(1-\ell^{k-2}X) \pmod{\frakP}
  \]
  for every rational prime \(\ell\).
\end{thm}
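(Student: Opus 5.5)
The plan has three stages: produce a Hermitian cusp form by congruence, descend it to a Siegel form, and read off the spinor congruence.

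\textbf{Step 1: the Hermitian congruence.} Conditions (1)--(5) are exactly the input for the Hermitian Klingen--Eisenstein congruence theorem of \cite{Takeda2026Congruences}. Condition (4) makes the normalized Klingen--Eisenstein lift \([f]^2_{j+4}\) have a \(\frakp\)-adically non-integral Fourier coefficient, and condition (5) says this non-integrality is exactly of size \(L^{\alg}(k+j,f)\). Conditions (2) and (3) give the integrality and multiplicity-one control needed to isolate an eigenform. The output is a Hermitian cusp eigenform \(G\) on \(\U_{2,2}\), of the weight attached to \({\det}^k\Sym^j\), with \(G\equiv_{ev}[f]^2_{j+4}\pmod{\frakP}\). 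On Satake parameters at split and inert primes this says the following. The semisimplified mod \(\frakP\) Galois representation \(\bar\rho_G\) of \(\Gal_E\) (dimension \(4\)) is
\[
  \bar\rho_f|_{\Gal_E}\oplus\bar\chi_1\oplus\bar\chi_2,
\]
with \(\bar\chi_1,\bar\chi_2\) the appropriate powers of the cyclotomic character, twisted by the base change of the Eisenstein part.

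\textbf{Step 2: descent via Mok's classification.} Let \(\pi\) be the cuspidal representation generated by \(G\). By \cite{Mok2015Endoscopic}, \(\pi\) has an Arthur parameter \(\psi=\boxplus\,\Pi_i\boxtimes\mathrm{Sp}_{d_i}\), whose shape I list and treat case by case.
\begin{itemize}
  \item Since \(k\geq4\), the infinitesimal character at infinity is regular and the weight is genuinely vector-valued. This excludes nontrivial \(\SL_2\)-factors.
  \item \(\psi\) is then a sum of conjugate self-dual cuspidal \(\Pi_i\) on \(\GL_{n_i}(\bbA_E)\).
  \item The endoscopic cases are \(\GL_2\boxplus\GL_2\), \(\GL_2\boxplus\GL_1\boxplus\GL_1\), \(\GL_3\boxplus\GL_1\), and so on. In each, \(\bar\rho_G\) would be realized as a sum of characteristic-zero pieces. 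Comparing with the residual decomposition of Step 1 forces some piece to be congruent to a character twist of \(\bar\rho_f\) or to a Hecke character.
  \item Each such congruence is ruled out by the hypotheses:
    \begin{itemize}
      \item by (3), for congruences with other weight-\((2k+j-2)\) forms;
      \item by (8), for congruences of characters with the Eisenstein part, since these are governed by \(\zeta(-j-1)\) and \(L(-j,\chi_E)\);
      \item by (7), for congruences at ramified primes \(q\mid D_E\).
    \end{itemize}
  \item The remaining option is that \(\Pi=\BC(\pi)\) is a cuspidal representation of \(\GL_4(\bbA_E)\).
\end{itemize}
Next I must show that \(\Pi\) is an Asai-type descent, i.e.\ \(\Pi\) is the base change of a cuspidal representation of \(\GL_4(\bbA_\bbQ)\) of symplectic type, equivalently a transfer from \(\PGSp_2=\SO(3,2)\). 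This is the main obstacle. The plan is a Selmer/non-congruence argument in the style of \cite{Atobe2023HarderII}:
\begin{enumerate}
  \item Using (6) and Ribet's lemma, choose a lattice so that \(\bar\rho_G\) is a non-split extension involving \(\bar\rho_f\) and a character. This produces a nonzero class in a Bloch--Kato Selmer group of \(\rho_f\) twisted by a power of the cyclotomic character.
  \item The class comes either from \(\Gal_\bbQ\) with trivial twist or from the \(\chi_E\)-twisted part.
  \item The \(\chi_E\)-twisted Selmer groups are controlled by \(L^{\alg}(k+j,f\otimes\chi_E)\) and \(L^{\alg}(k+j-1,f\otimes\chi_E)\). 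By (9) these Selmer groups vanish, via the Bloch--Kato bound available under the big image hypothesis (6).
  \item Hence the extension descends, i.e.\ \(\rho_G\) extends to \(\Gal_\bbQ\) compatibly. So \(\Pi\) is conjugation-invariant in the strong sense, namely \(\Pi\simeq\Pi^c\) together with \(\Pi^\vee\simeq\Pi^c\).
  \item Then \(\Pi\) is a base change from \(\GL_4/\bbQ\) of a self-dual form, and the regular weight forces it to be of symplectic type.
\end{enumerate}
This yields a cuspidal \(\sigma\) on \(\PGSp_2\) with \(\pi\) equal to its Hermitian spin lift. Matching archimedean weights and level one then gives a holomorphic Hecke eigenform \(F\in\calS_{{\det}^k\Sym^j}(\Sp_2)\) with \(G=\Lift_E(F)\).

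\textbf{Step 3: reading off the congruence.} Both sides of the Harder congruence are determined by their values on Frobenius classes, so it suffices to compare \(\bar\rho_F^{\Spin}\) and \(\bar\rho_f\oplus\varepsilon^{k-2}\oplus\varepsilon^{k+j-1}\) as representations of \(\Gal_\bbQ\).
\begin{itemize}
  \item Since \(G=\Lift_E(F)\), the Satake parameters at a split prime \(\ell\) give \(L_\ell(X,F,\Spin)^2\)-type information. At an inert prime they give \(L_\ell(X^2,F,\Spin)\)-type information, involving the Asai/base change relations.
  \item Combined with \(G\equiv_{ev}[f]^2_{j+4}\), these give \(\bar\rho_F^{\Spin}|_{\Gal_E}\simeq(\bar\rho_f\oplus\varepsilon^{k-2}\oplus\varepsilon^{k+j-1})|_{\Gal_E}\) after semisimplification.
  \item To pass from \(\Gal_E\) back to \(\Gal_\bbQ\), the two candidate descents differ by a twist by \(\chi_E\). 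The twisted possibility is excluded by \(p\nmid D_E\) together with (7) and (9), since it would force a congruence between \(\bar\rho_f\) and \(\bar\rho_f\otimes\chi_E\) or between characters.
  \item By Chebotarev and Brauer--Nesbitt, the characteristic polynomials of Frobenius then agree mod \(\frakP\) for every \(\ell\ne p\).
  \item The case \(\ell=p\) is handled using \(p>2k+j-2\) and Fontaine--Laffaille comparison of the crystalline Frobenius. Equivalently, the \(\ell=p\) coefficients are compared directly from the Hecke eigenvalue congruence, which holds for all \(\ell\).
\end{itemize}
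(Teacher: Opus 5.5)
Your three-stage architecture (Hermitian congruence, descent to \(\GL_4/\bbQ\) and \(\PGSp_2\), reading off the spinor congruence) matches the paper. However, the conjugate-invariance step has a genuine gap.

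In items 1--4 of Step~2 you do the following. You apply Ribet's lemma to \(\rho_G\) (the paper's \(R_G^\natural\)). You split the residual class between \(\overline\rho_f|_{\Gal_E}\) and a character into an untwisted and a \(\chi_E\)-twisted part. You kill the twisted part with (9), and conclude that \(\rho_G\) extends to \(\Gal_\bbQ\). That conclusion does not follow. The untwisted part is then necessarily non-zero; it is the extension that \(\frakp\mid L^{\alg}(k+j,f)\) predicts. Knowing that one residual extension class on one lattice is restricted from \(\Gal_\bbQ\) says nothing about whether \(\rho_G^c\simeq\rho_G\). Both have the same \(c\)-stable residual semisimplification \(\calB_{f,E}\), so residual data of this kind cannot distinguish them.

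The missing idea is to argue by contradiction through induction. If \(R_G^\natural\) is not conjugate invariant, then \(\Ind_{\Gal_E}^{\Gal_\bbQ}R_G^\natural\) is absolutely irreducible with residual semisimplification \(\calB_f\oplus(\calB_f\otimes\overline\chi_E)\). Some lattice then yields a non-zero extension crossing these two blocks. Every crossing class lies in a twisted group \(H^1_{\calF_E}(\bbQ,\Hom(S,T)\otimes\overline\chi_E)\) with \(S,T\in\{\overline\rho_f,\theta_1,\theta_2\}\), where \(\theta_1=\overline\omega^{-k+2}\) and \(\theta_2=\overline\omega^{-k-j+1}\).

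Hypothesis (9) only handles the four pairs in which exactly one of \(S,T\) is \(\overline\rho_f\). The remaining coefficient modules are \(\overline\chi_E\), \(\End(\overline\rho_f)\otimes\overline\chi_E\) and \(\overline\chi_E\overline\omega^{\pm(j+1)}\). The paper disposes of them with the Bella\"iche--Chenevier sign. The sign-\(+1\) conjugate self-duality of \(R_G^\natural\) induces a sign-\(+1\) self-duality on the induced representation, so the crossing class has sign \(+1\). The sign-\(+\) parts then either reduce to \(H^1_{\calF_E}(\bbQ,\overline\chi_E)\) or vanish (Corollary~\ref{cor:bc-plus-rho-selmer}, Lemma~\ref{lem:paired-character-bc-plus-selmer}). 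This is combined with Lemma~\ref{lem:chiE-selmer}, which is where \(p\nmid h_E\) and \(\Tame(0)\) are really used. Your proposal has no substitute for this argument.

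Several further points need repair.
\begin{enumerate}
\item Cyclic descent determines \(\Pi\) only up to \(\otimes\chi_E\), and a descent of \(\Pi_G\) may have inertia at \(q\mid D_E\) acting through \(\chi_{E,q}\). To get \(F\) of full level you must produce an everywhere unramified descent. The paper does this in Proposition~\ref{prop:unram-descent}: it uses the twisted Selmer vanishing to force a common \(\chi_E\)-twist on the three residual constituents, then local--global compatibility at \(q\). Your ``matching level one'' assumes this.
\item Your exclusion of endoscopic parameters misidentifies the mechanism. Hypothesis (3) concerns level-one forms over \(\bbQ\). It cannot rule out a conjugate self-dual \(\GL_2(\bbA_E)\)-piece congruent to \(\overline\rho_f|_{\Gal_E}\), and there is no need to.
\item In the \((2,2)\) case the contradiction comes from the other piece, whose residual is \(\overline\omega_E^{-k+2}+\overline\omega_E^{-k-j+1}\). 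It uses Ribet's lemma and the vanishing of \(H^1_{\calF_E}(E,\overline\omega_E^{-j-1})^{+}\): Herbrand with \(\zeta(-j-1)\) and \(\Tame(-j-1)\) handles the untwisted summand, and the sign argument handles the twisted one.
\item One-dimensional pieces are excluded by the parity constraint \((p-1)\nmid(j+1)\) (Lemma~\ref{lem:no-onedim}), not by \(L\)-values.
\item The absence of Arthur \(\SL_2\)-factors comes from the gaps \(k-2,j+1,k-2\) all exceeding \(1\), not from regularity alone.
\item In Step~3, the \(\chi_E\)-twisted constituents are excluded simply because \(\overline\rho_F\) is unramified at \(q\mid D_E\), while \(\chi_E\)-twists of constituents of \(\calB_f\) are ramified there. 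Hypotheses (7) and (9) play no role at that point.
\end{enumerate}
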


Except for the essential divisibility condition \(\frakp\mid L^{\alg}(k+j,f)\),
the hypotheses in Theorem~\ref{thm:harder-cong} are technical and can be checked explicitly in the examples considered below.

The proof has two main parts.

The first part takes place on the automorphic representation side.
We construct Hermitian spin lifts \(\Lift_E(F)\) by using Mok's endoscopic classification for quasi-split unitary groups \cite{Mok2015Endoscopic},
together with the functorial transfer from \(\GSp_4\) associated with the spin representation.
For these lifts,
the normalized standard \(L\)-polynomial of \(\Lift_E(F)\) factors as the product of the spinor \(L\)-polynomial of \(F\) and its \(\chi_E\)-twist.
Combining this factorization with the explicit standard \(L\)-polynomial of the Hermitian Klingen--Eisenstein lift,
we prove that
\[
  \Lift_E(F)\equiv_{ev}[f]^2_{j+4}\pmod{\frakP}
\]
implies
\[
  L_\ell(X,F,\Spin) \equiv L_\ell(X,f)(1-\ell^{k-2}X)(1-\ell^{k+j-1}X) \pmod{\frakP}
\]
for every prime \(\ell\).
This is Theorem~\ref{thm:harder}.

The second part gives sufficient conditions for the Hermitian cusp form \(G\) obtained from the Hermitian congruence to be of the form \(\Lift_E(F)\).
We use Galois representations attached to automorphic representations of unitary groups by Skinner \cite{Skinner2012Galois}.
From the congruence
\[
  G\equiv_{ev}[f]^2_{j+4}\pmod{\frakP},
\]
we obtain residual extension classes,
and we exclude the unwanted ones by Selmer-group vanishing.
The strategy follows the Galois-theoretic argument of \cite{Atobe2023HarderII}.
However,
the Selmer groups used here have weaker local conditions at the primes ramified in \(E/\bbQ\),
and this requires some modifications in the local arguments.

First,
we prove that the Galois representation \(R_G^\natural\) associated with \(G\) is absolutely irreducible under suitable hypotheses; this is Proposition~\ref{prop:RG-irred}.
Next,
we prove that the global Arthur parameter of \(G\) is invariant under conjugation; this is Theorem~\ref{thm:conj-inv}.
In this step,
we use the sign formalism for self-dual pseudocharacters due to Bella\"iche--Chenevier \cite{Bellaiche2009Families}.

Once conjugate invariance is known,
the associated cuspidal representation of \(\GL_4(\bbA_E)\) descends to \(\GL_4(\bbA_\bbQ)\).
The resulting automorphic representation of \(\GL_4(\bbA_\bbQ)\) is then identified with the transfer associated with a Siegel cusp eigenform.
This gives \(G=\Lift_E(F)\),
and hence the desired Harder-type congruence.

Combining these ingredients,
we obtain Theorem~\ref{thm:harder-cong}.

The paper is organized as follows.
In Section~\ref{sec:auto-forms},
we recall the automorphic preliminaries.
In Section~\ref{sec:spin-lifts},
we construct and study Hermitian spin lifts.
In Section~\ref{sec:galois-selmer},
we collect the Galois and Selmer preliminaries.
In Section~\ref{sec:main-thm},
we prove the results on residual extension classes and descent,
including Proposition~\ref{prop:RG-irred},
Theorem~\ref{thm:conj-inv},
and Theorem~\ref{thm:harder-cong}.
Finally,
Section~\ref{sec:examples} gives numerical examples.

\subsection*{Acknowledgements}
The authors are grateful to T.~Ikeda for his suggestions and support.
They also thank N.~Dummigan for helpful correspondence.

Takeda was supported by the Japan Science and Technology Agency (JST) SPRING Program,
Grant Number JPMJSP2110,
and by JSPS KAKENHI Grant Number JP26KJ1378.
Katsurada was supported by KAKENHI Grant Number JP24K06660.

\subsection*{Notation}
We denote by \(\M_{m,n}(R)\) the set of \(m\times n\) matrices with entries in \(R\).
In particular,
we put \(\M_n(R):=\M_{n,n}(R)\).
We denote by \(I_n\) the identity matrix in \(\M_n(R)\),
and by \(e_{ij}\) the matrix whose \((i,j)\)-entry is \(1\) and whose other entries are \(0\).
For \(X\in \M_n(R)\),
we denote by \(\det(X)\) and \(\tr(X)\) the determinant and the trace of \(X\),
respectively.
For \(X\in \M_{m,n}(R)\),
we denote by \({}^tX\) the transpose of \(X\).
We denote by \(\GL_n(R)\) the general linear group of degree \(n\) over \(R\).

Let \(E\) be a quadratic extension field of \(F\),
and let \(\rho\) be the non-trivial automorphism of \(E\) over \(F\).
For \(x\in E\),
we often put \(\overline{x}=\rho(x)\).
For \(X=(x_{ij})\in \M_{m,n}(E)\),
we put \(\overline{X}=(\overline{x_{ij}})\) and \(\adj{X}={}^t\overline{X}\).
When \(E\) is regarded as a subfield of \(\bbC\),
this convention agrees with complex conjugation.

Let \(K\) be an algebraic number field,
and let \(\frakp\) be a prime ideal of \(K\).
We denote by \(K_\frakp\) the \(\frakp\)-adic completion of \(K\),
and by \(\calO_K\) and \(\calO_\frakp\) the rings of integers of \(K\) and \(K_\frakp\),
respectively.
Let \(v_\frakp\) be the additive valuation of \(K_\frakp\) normalized by \(v_\frakp(\varpi_\frakp)=1\),
where \(\varpi_\frakp\) is a uniformizer of \(K_\frakp\).
We denote by \(\kappa_\frakp\) the residue field of \(K_\frakp\).
We put \(\calO_{(\frakp)}=K\cap \calO_\frakp\).
Let \(p_\frakp\) be the rational prime below \(\frakp\).

Let \(\Her_n(\bbC)\subset \M_n(\bbC)\) be the set of Hermitian matrices.
For \(X\in \Her_n(\bbC)\),
we write \(X>0\),
respectively \(X\geq 0\),
if \(X\) is positive definite,
respectively non-negative definite.
For a subset \(S\subset \Her_n(\bbC)\),
we denote by \(S_{>0}\),
respectively \(S_{\geq 0}\),
the subset of positive definite,
respectively non-negative definite,
matrices in \(S\).
If a group \(G\) acts on a vector space \(V\),
then we denote by \(V^G\) the subspace of \(G\)-invariant vectors in \(V\).

We denote by \({\det}^k\) the one-dimensional representation of \(\GL_n(\bbC)\) given by the \(k\)-th power of the determinant,
and by \(\Sym^l\) the \(l\)-th symmetric power representation of \(\GL_n(\bbC)\).
For a representation \((\rho,V)\),
we denote by \((\rho^*,V^*)\) the contragredient representation of \((\rho,V)\).

For a field \(K\),
we denote by \(\Gal_K\) the absolute Galois group of \(K\).
If \(K\) is a local or global field,
we denote by \(W_K\) its Weil group.
For a finite-length representation \(M\),
we denote by \(M^{ss}\) its semisimplification.

\section{Automorphic forms}
\label{sec:auto-forms}
\subsection{Unitary and symplectic groups}
\label{subsec:groups}

Let \(E\) be an imaginary quadratic extension of \(\bbQ\).
For a prime number \(p\),
we put
\[
  E_p=E\otimes_\bbQ \bbQ_p = \prod_{\frakp\mid p}E_\frakp,
\]
\[
  \calO_{E_p} = \prod_{\frakp\mid p}\calO_{E_\frakp}.
\]
We denote by \(\bbA_\bbQ\) the adele ring of \(\bbQ\),
and by \(\bbA_f\) and \(\bbA_\infty\) its finite and infinite parts.

\subsubsection{Unitary groups}
Let \(a\ge b\) be positive integers,
and put \(N=a+b\) and \(r=a-b\).
We define
\[
  J_{a,b} =
  \begin{pmatrix}
    0            & 0   & -\sqrt{-1}I_b \\
    0            & I_r & 0             \\
    \sqrt{-1}I_b & 0   & 0
  \end{pmatrix}.
\]
When \(r=0\),
the middle row and column are omitted.

The unitary group \(\U_{a,b}\) is the algebraic group over \(\bbQ\) defined by
\[
  \U_{a,b}(R) = \left\{ g\in \GL_N(E\otimes_\bbQ R) \ \middle|\ \adj{g}J_{a,b}g=J_{a,b} \right\}
\]
for any \(\bbQ\)-algebra \(R\).
We write \(\U(a,b)=\U_{a,b}(\bbR)\).

We fix the following \(L\)-group convention, which will be used throughout the paper.
With the above realization and \(N=a+b\),
\[
  \widehat{\U}_{a,b}=\GL_N(\bbC),
  \qquad
  \leftexp{L}{\U_{a,b}}=\GL_N(\bbC)\rtimes W_{\bbQ},
\]
where the action of \(W_{\bbQ}\) factors through \(\Gal(E/\bbQ)\),
and the non-trivial element acts by
\[xs
  h\longmapsto J_{a,b}\,{}^t h^{-1}J_{a,b}^{-1}.
\]
We also use
\[
  \leftexp{L}{(\Res_{E/\bbQ}\GL_N)}
  =
  (\GL_N(\bbC)\times\GL_N(\bbC))\rtimes W_{\bbQ},
\]
where \(W_E\) acts trivially on the dual group and the non-trivial element of \(\Gal(E/\bbQ)\) interchanges the two factors.

We define the Hermitian symmetric domain attached to \(\U(a,b)\) by
\[
  \calH^{(\typeIII)}_{a,b} = \left\{ Z=
  \begin{pmatrix}
    Z_1 \\
    Z_2
  \end{pmatrix}
  \in M_{a,b}(\bbC) \ \middle|\
  \begin{array}{l}
    Z_1\in M_b(\bbC),
    \ Z_2\in M_{r,b}(\bbC),
    \\
    \sqrt{-1}({}^t\overline{Z_1}-Z_1)-{}^t\overline{Z_2}Z_2>0
  \end{array}
  \right\}.
\]
Here \(M_{r,b}(\bbC)=0\) if \(r=0\).
For
\[
  g=
  \begin{pmatrix}
    A & B \\
    C & D
  \end{pmatrix}
  \in \U(a,b),
\]
with \(A\in M_a(\bbC), B\in M_{a,b}(\bbC), C\in M_{b,a}(\bbC), D\in M_b(\bbC)\),
the action of \(\U(a,b)\) on \(\calH^{(\typeIII)}_{a,b}\) is given by
\[
  gZ=(AZ+B)(CZ+D)^{-1}.
\]

For each finite place \(p\) of \(\bbQ\),
we define \(K_{a,b,p}=\U_{a,b}(\bbQ_p)\cap \GL_N(\calO_{E_p})\).
We put \(K_{a,b,f}=\prod_{p<\infty}K_{a,b,p}\).

Let \(K_{a,b,\infty}\) be the stabilizer in \(\U(a,b)\) of the element
\[
  \bfi_{a,b} =
  \begin{pmatrix}
    \sqrt{-1}I_b \\
    0_r
  \end{pmatrix}
  \in\calH^{(\typeIII)}_{a,b}.
\]
Then \(K_{a,b,\infty}\) is a maximal compact subgroup of \(\U(a,b)\),
and is isomorphic to \(\U(a)\times \U(b)\).

We put \(\frakg_{a,b}=\Lie(\U(a,b))\),
and \(\frakk_{a,b}=\Lie(K_{a,b,\infty})\).
We write
\[
  \frakg_{a,b} = \frakk_{a,b}\oplus \frakp_{a,b}
\]
for the Cartan decomposition.
After complexification,
we have
\[
  \frakp_{a,b}^{\bbC} = \frakp_{a,b}^+\oplus \frakp_{a,b}^-.
\]

\subsubsection{Symplectic groups}

Let \(n\) be a positive integer.
We put
\[
  J_n =
  \begin{pmatrix}
    0    & I_n \\
    -I_n & 0
  \end{pmatrix}.
\]
The symplectic group \(\Sp_{n}\) is the algebraic group over \(\bbQ\) defined by
\[
  \Sp_{n}(R) = \left\{ g\in \GL_{2n}(R) \ \middle|\ {}^t g J_n g=J_n \right\}
\]
for any \(\bbQ\)-algebra \(R\).

For each finite place \(p\) of \(\bbQ\),
we define \(K_{n,p}=\Sp_{n}(\bbZ_p)\).
We put \(K_{n,f}=\prod_{p<\infty}K_{n,p}\).

Let
\[
  \calH^{(\mathrm{I})}_n = \left\{ Z\in M_n(\bbC) \ \middle|\ {}^tZ=Z,\ \Im(Z)>0 \right\}
\]
be the Siegel upper half space.
The group \(\Sp_{n}(\bbR)\) acts on \(\calH^{(\mathrm{I})}_n\) by
\[
  \begin{pmatrix}
    A & B \\
    C & D
  \end{pmatrix}
  Z = (AZ+B)(CZ+D)^{-1}.
\]
Let \(K_{n,\infty}\) be the stabilizer in \(\Sp_{n}(\bbR)\) of the element \(\bfi_n=\sqrt{-1}I_n\in\calH^{(\mathrm{I})}_n\).
Then \(K_{n,\infty}\) is a maximal compact subgroup of \(\Sp_{n}(\bbR)\),
and is isomorphic to \(\U(n)\).

We put \(\frakg_n=\Lie(\Sp_{n}(\bbR))\),
\(\frakk_n=\Lie(K_{n,\infty})\).
We write
\[
  \frakg_n = \frakk_n\oplus \frakp_n
\]
for the Cartan decomposition.
After complexification,
we have
\[
  \frakp_n^{\bbC} = \frakp_n^+\oplus \frakp_n^-.
\]

\subsection{Hermitian and Siegel automorphic forms}
\label{subsec:automorphic-forms}

Let \(G\) be either \(\Sp_{n}\) or \(\U_{a,b}\).
Accordingly,
we write
\[
  (K_{G,f},K_{G,\infty},\frakg_G,\frakp_G^-) =
  \begin{cases}
    (K_{n,f},K_{n,\infty},\frakg_n,\frakp_n^-),
     & \text{if }G=\Sp_{n},
    \\
    (K_{a,b,f},K_{a,b,\infty},\frakg_{a,b},\frakp_{a,b}^-),
     & \text{if }G=\U_{a,b}.
  \end{cases}
\]
For each place \(v\) of \(\bbQ\),
we write \(K_{G,v}\) for the corresponding local factor of \(K_{G,f}K_{G,\infty}\).

\begin{dfn}
  Let \((\rho,V)\) be a polynomial representation of \(K_{G,\infty}^{\bbC}\).
  We fix a \(K_{G,\infty}\)-invariant Hermitian inner product \(\langle\, ,\,\rangle\) on \(V\).
  A holomorphic automorphic form on \(G(\bbA_\bbQ)\) of weight \((\rho,V)\) is a smooth \(V\)-valued function \(f:G(\bbA_\bbQ)\rightarrow V\) satisfying the following conditions:
  \begin{enumerate}
    \item For all \(\gamma\in G(\bbQ)\),
          \(g\in G(\bbA_\bbQ)\),
          and \(k=k_f k_\infty\in K_{G,f}K_{G,\infty}\),
          \[
            f(\gamma gk) = \rho(k_\infty)^{-1}f(g).
          \]
    \item \(f\) is annihilated by \(\frakp_G^-\) through right derivation.
    \item \(f\) is of moderate growth.
    \item \(f\) is \(Z(\frakg_G)\)-finite.
  \end{enumerate}
  We denote the space of such forms by \(\calA_\rho(G)\).
  When \(G=\Sp_{n}\),
  we call such a form a Siegel automorphic form of degree \(n\) and weight \((\rho,V)\).
  When \(G=\U_{a,b}\),
  we call such a form a Hermitian automorphic form of degree \((a,b)\) and weight \((\rho,V)\).
\end{dfn}

\begin{dfn}
  An automorphic form \(f\in\calA_\rho(G)\) is called a cusp form if
  \[
    \int_{N(\bbQ)\backslash N(\bbA_\bbQ)} f(ng)\,dn = 0
  \]
  for every \(g\in G(\bbA_\bbQ)\) and every unipotent radical \(N\) of a proper parabolic subgroup of \(G\).
  We denote the space of cusp forms by \(\calS_\rho(G)\).
\end{dfn}

We now define Fourier coefficients for Hermitian automorphic forms on \(\U_{n,n}\).
For \(X\in\Her_n(E)\),
put
\[
  \bfn(X)=
  \begin{pmatrix}
    I_n & X   \\
    0   & I_n
  \end{pmatrix}.
\]
The same formula defines an element \(\bfn(X)\in \U_{n,n}(\bbA_\bbQ)\) for \(X\in\Her_n(\bbA_E)\).
We fix the additive character
\[
  \bfe_{\bbA}(x)=\exp(2\pi\sqrt{-1}x_\infty) \prod_{\ell<\infty}\bfe_\ell(x_\ell)
\]
of \(\bbA_\bbQ/\bbQ\),
where \(\bfe_\ell\) is trivial on \(\bbZ_\ell\).
For \(f\in\calA_\rho(\U_{n,n})\),
\(S\in\Her_n(E)\),
and \(g\in\U_{n,n}(\bbA_\bbQ)\),
we define the \(S\)-th Fourier coefficient of \(f\) by
\[
  A_f(g,S) = \int_{\Her_n(E)\backslash\Her_n(\bbA_E)} f(\bfn(X)g)\, \overline{\bfe_{\bbA}(\tr(SX))}\,dX.
\]
Here \(\tr(SX)\in\bbA_\bbQ\),
and the Haar measure on \(\Her_n(\bbA_E)\) is normalized so that the finite integral lattice \(\Her_n(\calO_E\otimes\widehat{\bbZ})\) has volume \(1\).
Under the usual identification of adelic Hermitian automorphic forms with classical Hermitian modular forms on the Hermitian upper half space,
these coefficients agree with the classical Fourier coefficients.
By abuse of notation,
for the corresponding classical Hermitian modular form we write \(a(S;f)\) for its \(S\)-th Fourier coefficient.
See \cite[Section~2]{Takeda2026Congruences} for the precise comparison of the adelic and classical normalizations.

For each place \(v\) of \(\bbQ\),
we normalize the Haar measure \(dg_v\) on \(G(\bbQ_v)\) by \(\vol(K_{G,v})=1\).
The restricted product gives a Haar measure \(dg=\prod_v dg_v\) on \(G(\bbA_\bbQ)\).

For \(f,h\in\calA_\rho(G)\),
the Petersson inner product is
\[
  (f,h)_G = \int_{G(\bbQ)\backslash G(\bbA_\bbQ)} \langle f(g),h(g)\rangle\,dg.
\]
If either \(f\) or \(h\) is cuspidal,
this integral converges absolutely.

\begin{rem}
  The degree one Siegel case recovers the usual theory of elliptic modular forms.
  More precisely,
  we identify \(\calA_{{\det}^{2\nu}}(\Sp_1)\) with the space \(M_{2\nu}(\SL_2(\bbZ))\) of elliptic modular forms of weight \(2\nu\),
  and \(\calS_{{\det}^{2\nu}}(\Sp_1)\) with the space \(S_{2\nu}(\SL_2(\bbZ))\) of elliptic cusp forms of weight \(2\nu\).
  Under this identification,
  a form \(f\in\calS_{{\det}^{2\nu}}(\Sp_1)\) is regarded as an element of \(S_{2\nu}(\SL_2(\bbZ))\).
  We write its Fourier expansion at \(\infty\) as
  \[
    f(z) = \sum_{m=1}^{\infty}a(m;f)q^m,
  \]
  where \(q=e^{2\pi\sqrt{-1}z}\).
  When we say that \(f\) is a primitive elliptic cusp form,
  we mean that \(f\) is a Hecke eigenform in \(S_{2\nu}(\SL_2(\bbZ))\) with \(a(1;f)=1\).

  Put \(\sigma_\nu={\det}^{\nu}\boxtimes{\det}^{\nu}\).
  To regard an elliptic cusp form as a Hermitian automorphic form on \(\U_{1,1}\),
  choose integral ideals \(\fraka_1=\calO_E,\ldots,\fraka_{h_E}\) representing the ideal class group \(\Cl(E)\) of \(E\).
  Put
  \[
    r_i=
    \begin{pmatrix}
      \sqrt{N_{E/\bbQ}(\fraka_i)} & 0                             \\
      0                           & 1/\sqrt{N_{E/\bbQ}(\fraka_i)}
    \end{pmatrix}.
  \]
  For \(f\in S_{2\nu}(\SL_2(\bbZ))\),
  define
  \[
    f_i=f\big|_{2\nu}r_i^{-1}.
  \]
  The adelic--classical correspondence of \cite[Section~2.2]{Takeda2026Congruences} gives an injective map
  \begin{align*}
    \iota_E:S_{2\nu}(\SL_2(\bbZ)) & \rightarrow \calS_{\sigma_\nu}(\U_{1,1}),      \\
    f                             & \mapsto\iota_E(f)=(f_1,\ldots,f_{h_E})^\sharp.
  \end{align*}
  We write \(f\) also for \(\iota_E(f)\) when no confusion can arise.
\end{rem}

\subsection{\texorpdfstring{Algebraic \(L\)-values}{Algebraic L-values}}
\label{subsec:l-values}

Let \(F\in \calS_{\rho_{(n,\bsk)}}(\Sp_{n})\) be a Hecke cusp eigenform.
For a prime number \(p\),
let \(\alpha_0(p),\alpha_1(p),\ldots,\alpha_n(p)\) be the \(p\)-Satake parameters of \(F\),
normalized so that
\[
  \alpha_0(p)^2\alpha_1(p)\cdots\alpha_n(p) = p^{k_1+\cdots+k_n-n(n+1)/2}.
\]
For a subset \(I\subset\{1,\ldots,n\}\),
we put \(\alpha_I(p)=\prod_{i\in I}\alpha_i(p)\),
with the convention \(\alpha_\emptyset(p)=1\).
We define the local spinor \(L\)-polynomial by
\[
  L_p(X,F,\Spin) = \prod_{I\subset\{1,\ldots,n\}} \left(1-\alpha_0(p)\alpha_I(p)X\right),
\]
and the spinor \(L\)-function is given by
\[
  L(s,F,\Spin) = \prod_p L_p(p^{-s},F,\Spin)^{-1}.
\]
For a Dirichlet character \(\chi\),
we define
\[
  L(s,F,\Spin\otimes\chi) = \prod_p L_p(\chi(p)p^{-s},F,\Spin)^{-1},
\]
where we use the convention \(\chi(p)=0\) if \(p\) divides the conductor of \(\chi\).

For a primitive elliptic cusp form \(f=\sum_{m=1}^\infty a(m;f)q^m\in S_{2\nu}(\SL_2(\bbZ))\),
we define the local \(L\)-polynomial by
\[
  L_p(X,f)=1-a(p;f)X+p^{2\nu-1}X^2,
\]
and define
\[
  L(s,f)=\prod_p L_p(p^{-s},f)^{-1}.
\]

We shall also use the standard \(L\)-function of Hermitian cusp forms.
Let \(F \in \calS_\rho(\U_{n,n})\) be a Hecke cusp eigenform.
To fix its normalization,
we first specify the representation of the \(L\)-group that defines the local standard factor.
Throughout this paper,
\(\St\) denotes the composition
\[
  \St: \leftexp{L}{U_{n,n}} \xrightarrow{\ \xi_{\mathbf{1}}\ } \leftexp{L}{(\Res_{E/\bbQ}\GL_{2n})} =
  (\GL_{2n}(\bbC)\times\GL_{2n}(\bbC)) \rtimes W_{\bbQ} \rightarrow \GL_{4n}(\bbC),
\]
where the action of \(W_{\bbQ}\) factors through \(\Gal(E/\bbQ)\).
Here,
\(\xi_{\mathbf{1}}\) is defined in \eqref{eq:xi},
and the last arrow restricts on \(\GL_{2n}(\bbC)\times\GL_{2n}(\bbC)\) to the direct sum of the two standard representations.

For each prime \(p\) at which \(F\) is spherical,
let \(s_p(F)\) denote its Satake parameter.
We normalize \(s_p(F)\),
viewed as a semisimple conjugacy class in the \(L\)-group,
by requiring the identity
\[
  L_p(X,F,\St) = \det\left(1-\St(s_p(F))X\right).
\]

Choose representatives of \(s_p(F)\) as follows.
If \(p\) splits in \(E\),
write the corresponding parameters as \(\alpha_1(p),\ldots,\alpha_{2n}(p)\).
If \(p\) is non-split in \(E\),
write them as \(\alpha_1(p),\ldots,\alpha_n(p)\).

With these conventions,
the same local standard \(L\)-polynomial is given explicitly by
\[
  L_p(X,F,\St) =
  \begin{cases}
    \displaystyle \prod_{i=1}^{2n} (1-\alpha_i(p)X)(1-\alpha_i(p)^{-1}X),
     & p \text{ is split in } E,
    \\[6pt]
    \displaystyle \prod_{i=1}^{n}
    (1-\alpha_i(p)X^2)(1-\alpha_i(p)^{-1}X^2),
     & p \text{ is inert in } E,
    \\[6pt]
    \displaystyle \prod_{i=1}^{n} (1-\alpha_i(p)X)(1-\alpha_i(p)^{-1}X),
     & p \text{ is ramified in } E.
  \end{cases}
\]
The global standard \(L\)-function is then given by the Euler product
\[
  L(s,F,\St) = \prod_p L_p(p^{-s},F,\St)^{-1}.
\]

We recall the algebraicity results for the \(L\)-functions used below.
Let \(h\) be a normalized primitive newform of weight \(2\nu\),
level \(N_h\),
and nebentypus \(\psi_h\).
Let \(\bbQ(h)\) be its Hecke field.
Let \(K\) be a number field containing \(\bbQ(h)\),
the values of all Dirichlet characters which occur below,
and all period quotients which occur below.
Let \(\frakP\) be a prime of \(K\) above \(p\).
We assume that \(p\nmid N_h\).

We first recall the algebraicity theorem of Shimura.
Fix Shimura periods \(u^\pm(h)\) as in \cite{Shimura1976SpecialValues,Shimura1977Periods} so that
\[
  u^+(h)u^-(h)=(h,h)_{\SL_2(\bbZ)}.
\]
For a primitive Dirichlet character \(\eta\),
we denote by \(G(\eta)\) its Gauss sum.
For an integer \(s\) with \(1\leq s\leq 2\nu-1\),
put \(\alpha(s,\eta)=(-1)^s\eta(-1)\).
We put
\[
  L^{\sh}(s, h\otimes\eta) = \frac{L(s,h\otimes\eta)}{(2\pi i)^sG(\eta)u^{\alpha(s,\eta)}(h)}.
\]
Here \(u^{\alpha(s,\eta)}(h)\) means \(u^+(h)\) if \(\alpha(s,\eta)=1\),
and \(u^-(h)\) if \(\alpha(s,\eta)=-1\).

\begin{prop}[\cite{Shimura1976SpecialValues,Shimura1977Periods}]
  \label{prop:shimura-alg}
  Let \(\eta\) be a primitive Dirichlet character,
  and let \(s\) be an integer with \(1\leq s\leq 2\nu-1\).
  Then \(L^{\sh}(s,h\otimes\eta)\in\bbQ(h)(\eta)\).
\end{prop}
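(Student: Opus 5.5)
This is Shimura's algebraicity theorem, cited from the literature. The plan is to deduce it from the Rankin--Selberg method: pair \(h\) against the product of a holomorphic Eisenstein series and a (nearly) holomorphic theta-type Eisenstein series, then use rationality of Fourier coefficients and \(\Aut(\bbC)\)-equivariance.

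\textbf{Integral representation.} Fix \(s\) with \(1\le s\le 2\nu-1\).
\begin{itemize}
\item Write \(L(s,h\otimes\eta)\) (times an explicit Euler factor at primes dividing the conductor) as a Petersson inner product
\[
  (h^\rho, g_1\,\delta^{r}g_2).
\]
\item Here \(g_1,g_2\) are Eisenstein series of weights \(\lambda\) and \(\mu\) with \(\lambda+\mu+2r=2\nu\), attached to characters built from \(\eta\) and \(\psi_h\).
\item \(\delta^r\) is the Maass--Shimura differential operator.
\item This follows by unfolding against a Rankin convolution \(D(s,h,g)\), using the Fourier expansions of \(g_1,g_2\).
\end{itemize}

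\textbf{Rationality of the pieces.}
\begin{itemize}
\item The Fourier coefficients of \(g_1\) and \(g_2\), suitably normalized by powers of \(\pi\), Gauss sums and \(i\), are given by divisor sums. These lie in \(\bbQ(\eta,\psi_h)\) and transform \(\Aut(\bbC)\)-equivariantly.
\item Apply Shimura's holomorphic projection, which preserves rationality of nearly holomorphic forms of positive weight, to reduce \(\delta^r g_2\cdot g_1\) to a holomorphic form \(\Phi\) with Fourier coefficients in \(\bbQ(h)(\eta)\), up to a factor \(\pi^{r}\).
\end{itemize}

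\textbf{Rationality of the inner product.} Decompose \(\Phi\) in a Hecke eigenbasis. The map \(\Phi\mapsto (h,\Phi)/(h,h)\) is \(\Aut(\bbC)\)-equivariant for \(\Phi\) with algebraic coefficients, because the \(h\)-component of \(\Phi\) is cut out by a Hecke projector with coefficients in \(\bbQ(h)\).

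This shows that \(L(s,h\otimes\eta)\) divided by \((2\pi i)^s G(\eta)(h,h)\) lies in \(\bbQ(h)(\eta)\) when the pairing \(g_1\delta^r g_2\) is available. That holds for \(s\) in the range where one Eisenstein series can absorb the critical strip.

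\textbf{Main obstacle: splitting \((h,h)\) into \(u^+u^-\) by parity.} This step yields the product of two periods, not the individual periods \(u^\pm(h)\). To separate them:
\begin{itemize}
\item Use the Eichler--Shimura isomorphism together with the Manin--Shimura theorem. The period polynomial of \(h\) decomposes into even and odd parts. Its coefficients are the values \(L(s,h\otimes\eta)\) with \(\alpha(s,\eta)=\pm1\), via twisting by Birch's formula with additive twists and Gauss sums.
\item The \(\bbQ(h)\)-rational structure on cohomology defines \(u^\pm(h)\) as the scalars making the \(\pm\)-eigenparts rational.
\item The cup product pairing then gives \(u^+u^-\sim(h,h)\); normalize so that equality holds.
\end{itemize}

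This last step, making the sign bookkeeping \(\alpha(s,\eta)=(-1)^s\eta(-1)\) consistent with the \(\pm\) eigenspaces, is the delicate part. In the paper it is simply imported from \cite{Shimura1976SpecialValues,Shimura1977Periods}.
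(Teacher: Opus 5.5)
The paper gives no proof of this statement; it simply cites Shimura. Your reconstruction, however, fails at its central step.

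\textbf{The gap.} After unfolding the nearly holomorphic Eisenstein series in \((h^\rho,g_1\,\delta^r g_2)\), what remains is the Rankin convolution of \(h\) with the other Eisenstein series. That convolution is not a single twisted \(L\)-value. If \(g_1\) has weight \(\mu\) and \(a(n;g_1)=\sum_{d\mid n}\chi_1(n/d)\chi_2(d)d^{\mu-1}\), then
\[
  \sum_{n\geq1}a(n;h)a(n;g_1)n^{-s}
  =\frac{L(s,h\otimes\chi_1)\,L(s-\mu+1,h\otimes\chi_2)}{L(2s+2-2\nu-\mu,\psi_h\chi_1\chi_2)}.
\]
Since \(\chi_1\chi_2(-1)=(-1)^\mu\), one has \(\alpha(s-\mu+1,\chi_2)=-\alpha(s,\chi_1)\). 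So the method controls a product of two critical values of opposite sign type, and that is exactly why \((h,h)=u^+u^-\) appears.

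Your intermediate claim, \(L(s,h\otimes\eta)/((2\pi i)^sG(\eta)(h,h))\in\bbQ(h)(\eta)\), is therefore not what the method gives. Combined with the proposition itself and the non-vanishing of \(L(2\nu-1,h\otimes\eta)\) for \(\eta\) even and odd (by absolute convergence), it would force \(u^+\) and \(u^-\) to be individually algebraic, which is not expected. Your last step is thus aimed at the wrong problem. Factoring the denominator \((h,h)\) as \(u^+u^-\) cannot turn ``\(L/(u^+u^-)\) algebraic'' into ``\(L/u^{\alpha(s,\eta)}\) algebraic''.

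\textbf{The missing idea} is a non-vanishing reference value; this is how Shimura's 1976 argument actually runs.
Take \(\chi_2=\eta\) and \(\mu=2\nu-m\), and evaluate at \(s=2\nu-1\). Then:
\begin{itemize}
  \item the second factor is the target \(L(m,h\otimes\eta)\);
  \item its partner is \(L(2\nu-1,h\otimes\chi_1)\) with \(\chi_1(-1)=\alpha(m,\eta)\), which is non-zero because it lies in the region of absolute convergence;
  \item the Dirichlet factor is the critical value \(L(m,\chi_1\eta)\).
\end{itemize}
Fix once and for all characters \(\chi^{(\pm)}\) with \(\chi^{(\pm)}(-1)=\mp1\), and define \(u^{\pm}\) through \(L(2\nu-1,h\otimes\chi^{(\pm)})/((2\pi i)^{2\nu-1}G(\chi^{(\pm)}))\). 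Dividing the product identity by the appropriate reference value, and using \(\Aut(\bbC)\)-equivariance, gives the proposition. Applying the same identity to the pair of reference values (weight \(\mu=1\)) shows \(u^+u^-\in\bbQ(h)^\times(h,h)\), which is what justifies the normalization \(u^+u^-=(h,h)\).

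\textbf{Alternative.} Your final paragraph can be made into a complete, independent proof:
\begin{itemize}
  \item Manin's rationality theorem for the level-one period polynomial;
  \item Birch's formula together with Manin's continued-fraction trick, to reduce the additively twisted periods \(\int_{a/N}^{i\infty}\) to the basic ones (the period-polynomial coefficients themselves only give untwisted values);
  \item the Petersson--Haberland formula, to compare \(\omega^+\omega^-\) with \((h,h)\).
\end{itemize}
In that case the Rankin--Selberg part is superfluous rather than a first step.
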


We now replace the Shimura periods by the Kato periods.
Let \(\Omega^\pm(h)\) be the Kato periods associated with \(h\),
as in \cite{Kato2004HodgeZeta}.
After enlarging \(K\),
we may write \(\Omega^\pm(h)=\lambda^\pm(h)u^\pm(h)\) with \(\lambda^\pm(h)\in K^\times\).
From now on,
we use the Kato periods in the normalization of special values.
Thus we define
\[
  L^{\alg}(s, h\otimes\eta) = \frac{L(s,h\otimes\eta)}{(2\pi i)^sG(\eta)\Omega^{\alpha(s,\eta)}(h)} \in \calO_K.
\]

The following fact about the Kato periods is known (see,
for example,
\cite[Lemma~9.1]{Atobe2023HarderII}).

\begin{lem}
  \label{lem:kato-periods}
  Let \(h\) be a primitive form in \(S_{2\nu}(\SL_2(\bbZ))\).
  Fix a prime \(p\) satisfying \(p>2\nu-2\).
  Let \(\frakp\) be a prime ideal of the ring of integers of \(\bbQ(h)\) dividing \(p\).
  Assume that \(h\) is not congruent modulo \(\frakp\) to any other primitive form in \(S_{2\nu}(\SL_2(\bbZ))\).
  Then the product of the Kato periods \(\Omega^+(h)\Omega^-(h)\) coincides with the Petersson inner product \((h,h)\) up to a \(\frakp\)-adic unit.
\end{lem}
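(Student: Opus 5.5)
The idea is to compare two integral structures on the $h$-part of the cohomology of $\SL_2(\bbZ)$ with coefficients in $\Sym^{2\nu-2}$: the Betti lattice and the de Rham line spanned by $h$. Kato's periods $\Omega^\pm(h)$ are defined precisely as the comparison between them. The Petersson norm is then computed through the Poincaré (cup product) pairing. The obstruction to the two sides agreeing up to a unit is a congruence module, and hypothesis (3) on non-congruence makes it trivial.

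\emph{Step 1: the localized Betti lattice.} Let $\calO=\calO_\frakp$ and $V_\calO=\Sym^{2\nu-2}(\calO^2)$. Consider the parabolic cohomology $H^1_{\mathrm{par}}(\SL_2(\bbZ),V_\calO)$, localized at the maximal ideal $\frakm$ of the Hecke algebra $\bbT$ attached to $h$ modulo $\frakp$.
\begin{itemize}
\item Since $p>2\nu-2$ and $p\geq5$, the finite stabilizers in $\SL_2(\bbZ)$ have order prime to $p$. The module $V_\calO$ is also self-dual via the standard symplectic form, since the binomial coefficients involved are units.
\item Hence the cup product pairing on the $\frakm$-localized lattice $H^1_{\mathrm{par}}(V_\calO)_\frakm$ is perfect (integral Poincaré duality).
\item By hypothesis (3), $\bbT_\frakm=\calO$.
\item Consequently each sign component $H^1_{\mathrm{par}}(V_\calO)_\frakm^\pm$ is free of rank one over $\calO$. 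Let $\delta^\pm$ be generators.
\end{itemize}

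\emph{Step 2: the period relation.} By the Eichler–Shimura comparison, the differential $\omega_h$ attached to $h$ satisfies
\[
\omega_h=\Omega^+(h)\,\delta^+ + \Omega^-(h)\,\delta^-,
\]
up to a $\frakp$-adic unit. This is Kato's normalization, after the rescaling $\Omega^\pm=\lambda^\pm u^\pm$ fixed above. Pair $\omega_h$ with its complex conjugate, which exchanges the sign components. The classical formula expresses $\langle\omega_h,\overline{\omega_h}\rangle$ as an explicit power of $2\pi i$ (together with small integers that are $\frakp$-units for $p>2\nu-2$) times $(h,h)$. On the other hand, expanding in the basis gives $\Omega^+(h)\Omega^-(h)\langle\delta^+,\delta^-\rangle$ up to sign. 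Hence
\[
(h,h)\sim \Omega^+(h)\,\Omega^-(h)\,\langle\delta^+,\delta^-\rangle
\]
up to a $\frakp$-adic unit.

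\emph{Step 3: the pairing is a unit.} The main obstacle is to show that $\langle\delta^+,\delta^-\rangle$ is a $\frakp$-adic unit. In general this number measures the congruence module of $h$, following Hida. It can fail to be a unit exactly when the $h$-isotypic lines are not direct summands compatible with duality.

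Here the argument runs as follows.
\begin{itemize}
\item Since $\bbT_\frakm=\calO$, the localized lattice equals its $h$-eigenpart.
\item It decomposes as $\calO\delta^+\oplus\calO\delta^-$.
\item The pairing is Hecke-equivariant and exchanges the $\pm$ components, since complex conjugation is antilinear for the orientation. So it restricts to $\calO\delta^+\times\calO\delta^-$.
\item Perfectness of the pairing from Step 1 then forces $\langle\delta^+,\delta^-\rangle\in\calO^\times$.
\end{itemize}

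The care needed lies in checking two things: that localization at $\frakm$ is compatible with the duality, using the Atkin–Lehner/Hecke self-adjointness; and that no Eisenstein or boundary contribution enters $H^1_{\mathrm{par}}$ at $\frakm$, which holds because $\frakm$ is cuspidal and non-Eisenstein. This is where $p>2\nu-2$ is used. It also serves as the Fontaine–Laffaille range in which Kato's integral comparison holds.

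Combining Steps 2 and 3 shows that $\Omega^+(h)\Omega^-(h)$ and $(h,h)$ agree up to a $\frakp$-adic unit.
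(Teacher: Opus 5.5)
\textbf{Gap: you assume \(\frakm\) is non-Eisenstein, and Step~2 depends on it.}

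The paper does not prove this lemma; it quotes \cite[Lemma~9.1]{Atobe2023HarderII}. Your outline is the standard Hida-type argument.

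You assert that \(\frakm\) is non-Eisenstein, and that this is where \(p>2\nu-2\) is used. Neither hypothesis gives this. The bound on \(p\) says nothing about \(\zeta(1-2\nu)\), and the non-congruence hypothesis concerns only cusp forms. For instance, \(h=\Delta\) and \(p=691\) satisfy every hypothesis of the lemma, since \(\dim S_{12}(\SL_2(\bbZ))=1\). Yet \(\tau(\ell)\equiv 1+\ell^{11}\pmod{691}\), so \(\frakm\) is Eisenstein.

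This matters in Step~2, where you simply declare that Kato's \(\Omega^\pm\) are the periods relative to generators of \(H^1_{\mathrm{par}}(V_\calO)^\pm_\frakm\). As I recall Kato's normalization, his lattice is the image, in the \(h\)-quotient, of the integral cohomology of the \emph{open} modular curve. Identifying that lattice with the parabolic eigen-sublattice needs two inputs.
\begin{itemize}
\item \(\bbT_\frakm=\calO\), so that quotient equals sub. This is where non-congruence enters.
\item Vanishing of boundary cohomology at \(\frakm\), so that open equals parabolic. This is exactly non-Eisensteinness.
\end{itemize}
When \(\frakm\) is Eisenstein, one sign of \(H^1(\SL_2(\bbZ),V_\calO)_\frakm\) carries an extra Eisenstein line. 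The Eisenstein class with unit boundary value has a denominator governed by the numerator of \(\zeta(1-2\nu)\) (Harder); for \(\Delta\) this is \(691\). Hence the image of \(H^1\) in the \(h\)-quotient contains the parabolic line with that index, and \(\Omega^+\Omega^-\) moves by exactly that factor. So Step~2 is not bookkeeping: the comparison can fail, and nothing in your argument excludes this.

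\textbf{Steps~1 and~3 survive without non-Eisensteinness.}
\begin{itemize}
\item The torsion in \(\SL_2(\bbZ)\) has order prime to \(p\), so integral duality gives \(H^1_c\simeq\Hom(H^1,\calO)\).
\item Since \(2\nu-2<p\), the boundary terms \(H^0(\partial)\) and \(H^1(\partial)\) are free of rank one.
\item Together these make the cup product on \(H^1_{\mathrm{par}}(V_\calO)\) perfect.
\item Hecke self-adjointness makes the \(\frakm\)-part an orthogonal summand.
\end{itemize}
So \(\langle\delta^+,\delta^-\rangle\) is indeed a unit for the parabolic lattice.

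\textbf{What is missing.} You need an explicit non-Eisenstein hypothesis, for example absolute irreducibility of \(\overline\rho_h\), and then the lattice comparison carried out under it. In the paper's application this is available: hypothesis~(6) of the main theorem forces \(\overline\rho_f\) to be irreducible.

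\textbf{Two smaller points.}
\begin{itemize}
\item The power of \(2\pi i\) in Step~2 is not a \(\frakp\)-adic unit. It must be built into the normalization of the comparison map, not dropped.
\item Kato's periods come from the complex Betti--de~Rham comparison, so no Fontaine--Laffaille input is involved. In any case the Fontaine--Laffaille range would be \(p>2\nu\), not \(p>2\nu-2\).
\end{itemize}
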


We shall also use the following algebraicity result for the standard \(L\)-function of a Hermitian cusp form.
For \(\U_{n,n}\),
we identify \(K_{n,n,\infty}^{\bbC}\) with \(\GL_n(\bbC)\times\GL_n(\bbC)\) in the usual way.
By a dominant integral weight \((k_1,\ldots,k_n;l_1,\ldots,l_n)\),
we mean that \(k_1 \geq \cdots \geq k_n \geq 0\),
\(l_1 \geq \cdots \geq l_n \geq 0\),
\(k_i,l_i \in\bbZ\).
We use the same symbol \((k_1,\ldots,k_n;l_1,\ldots,l_n)\) for the irreducible polynomial representation of \(\GL_n(\bbC)\times\GL_n(\bbC)\) whose two highest weights are \((k_1,\ldots,k_n)\) and \((l_1,\ldots,l_n)\).

\begin{prop}[Corollary~5.14 in \cite{Takeda2026Congruences}]
  \label{prop:std-alg}
  Let \(F\) be a Hermitian cusp form of degree \(n\) and weight given by a dominant integral weight \((k_1,\ldots,k_n;l_1,\ldots,l_n)\).
  Let \(s\) be an integer with \(0\leq s\leq (k_n+l_n)/2-n\).
  Then
  \[
    L^{\alg}(s+1/2,F,\St) :=
    \frac{L(s+1/2,F,\St)}{|D_E|^{n/2}\cdot\pi^{\sum_{i=1}^n(k_i+l_i)+2n(s+1/2)-n}\cdot(F,F)_{\U_{n,n}}} \in
    \bbQ(F).
  \]
\end{prop}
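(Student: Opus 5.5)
The natural route is the doubling method (the Garrett--Shimura pullback formula) combined with the rationality of Fourier coefficients of holomorphic Eisenstein series. I would take the Siegel Eisenstein series \(\calE(\calZ,s)\) on \(\U_{2n,2n}\) of scalar weight \(\kappa\) with \(\kappa\) chosen from \((k_n,l_n)\), and restrict it along the embedding \(\U_{n,n}\times\U_{n,n}\hookrightarrow\U_{2n,2n}\). To reach the vector-valued weight \((k_1,\ldots,k_n;l_1,\ldots,l_n)\), I would apply a holomorphic differential operator \(\frakD\) of Ibukiyama type, mapping scalar-valued forms on \(\U_{2n,2n}\) to \(V\otimes V\)-valued forms on \(\U_{n,n}\times\U_{n,n}\) and commuting with restriction. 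The unfolding computation (local zeta integrals at unramified places, computed by Piatetski-Shapiro--Rallis and Li) gives
\[
  \bigl(\frakD\calE(\cdot,s)|_{\calZ=\mathrm{diag}(Z,W)},\,F(W)\bigr)_{\U_{n,n}}
  = c(s)\,\frac{L(s',F,\St)}{\text{normalizing }L\text{-factor}}\,F(Z),
\]
where \(s'\) is the shift of \(s\). The normalizing factor is a product of Dirichlet \(L\)-values \(\prod_i L(2s'+i,\chi_E^{i})\). The archimedean constant \(c(s)\) I would compute explicitly as a power of \(\pi\) times a rational number, together with \(|D_E|\)-powers coming from the volume normalizations.

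Next I would choose \(s\) so that \(s'=s_0+1/2\) with \(0\le s_0\le (k_n+l_n)/2-n\). In this range \(\calE(\cdot,s)\) is holomorphic, or nearly holomorphic after applying \(\frakD\). Its Fourier coefficients are products of Siegel series, which are polynomials in \(\ell^{-s}\) with rational coefficients, and confluent hypergeometric factors, which at these points are explicit powers of \(\pi\) times rationals. After dividing by the Dirichlet \(L\)-values (rational up to \(\pi\)-powers and \(|D_E|^{1/2}\)), this shows that the pulled-back kernel, divided by the appropriate power of \(\pi\), has Fourier coefficients in \(\bbQ\) (or in a cyclotomic field) on both variables.

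Finally, I would expand the kernel in an orthogonal Hecke eigenbasis \(\{F_i\}\) of \(\calS_\rho(\U_{n,n})\) as \(\sum_i c\,L(s',F_i,\St)\,(F_i,F_i)^{-1}F_i(Z)\otimes\overline{F_i(W)}\). Then I would use the fact that the space has a basis with algebraic Fourier coefficients stable under \(\Aut(\bbC)\) and the Hecke operators. This lets me isolate the \(F\)-component by a Hecke projector with coefficients in \(\bbQ(F)\), and compare the Fourier coefficients of both sides. The result is \(L(s',F,\St)/(\pi^{\ast}|D_E|^{n/2}(F,F))\in\bbQ(F)\), with \(\ast=\sum(k_i+l_i)+2n s'-n\).

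I expect the main obstacle to be the archimedean part. This means proving that \(\frakD\) applied to \(\calE\) and then restricted is holomorphic (rather than merely nearly holomorphic) at the relevant points, or else applying a holomorphic projection that preserves rationality. It also requires computing the archimedean zeta integral in the vector-valued case precisely enough to obtain exactly the stated power of \(\pi\) with a nonzero rational constant. I would attack this by reducing, via the \(K_\infty\)-equivariance of \(\frakD\), to a computation on a highest-weight vector, where the integral becomes a Hermitian matrix gamma-type integral.
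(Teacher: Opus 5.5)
The paper gives no proof of this statement; it quotes it from \cite[Corollary~5.14]{Takeda2026Congruences}. Judging from what it borrows from that work (the pullback formula, Shimura's formulas for Fourier coefficients of Hermitian Eisenstein series, and the differential operators of \cite{Takeda2025Differential}), the result rests on the same doubling machinery you propose.

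Your outline nonetheless has a real gap at the step you call the main obstacle, and the first remedy you suggest (proving that \(\mathfrak{D}\calE\) restricted is holomorphic) cannot work. If the scalar weight of \(\calE\) is fixed by \((k_n;l_n)\) and you move the complex parameter, then \(\calE(\cdot,s)\) is holomorphic at only one point, the one giving \(L\) at \((k_n+l_n)/2-n+1/2\). At the other points you need it is at best nearly holomorphic; where it is nearly holomorphic it is, up to a constant, a Maass--Shimura derivative of a holomorphic Eisenstein series of lower weight. Applying the holomorphic operator \(\mathfrak{D}\) and restricting does not make it holomorphic. In particular, the archimedean parts of its Fourier coefficients are not powers of \(\pi\) times rationals, but polynomials in the entries of \(Y^{-1}\) times exponentials. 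So your comparison of Fourier coefficients has no content unless you import Shimura's arithmetic theory of nearly holomorphic forms and the arithmeticity of holomorphic (or Hecke) projection, which is considerably heavier.

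The missing idea is to move the weight instead of \(s\). Take the holomorphic Eisenstein series on \(\U_{2n,2n}\) of scalar weight \((\kappa;\lambda)\), with \(\kappa\le k_n\), \(\lambda\le l_n\) and \(\kappa+\lambda=2(n+s)\), at its holomorphic point. Let \(\mathfrak{D}\) raise the weight to \((k_1,\ldots,k_n;l_1,\ldots,l_n)\). The doubling integral then produces \(L(s+1/2,F,\St)\), and as \(s\) runs over \(0\le s\le(k_n+l_n)/2-n\) the total weight runs over \(2n\le\kappa+\lambda\le k_n+l_n\), which matches the range in the statement. The restricted kernel is then holomorphic and no projection is needed. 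What remains is the analytic continuation and Shimura's explicit Fourier coefficients for the small weights below absolute convergence, and the non-vanishing of the archimedean constant attached to \(\mathfrak{D}\).

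Two further points need repair. First, the restricted kernel is not cuspidal. In the Garrett--B\"ocherer form of the pullback formula it in general also contains, for each \(r<n\), terms built in both variables from Klingen--Eisenstein series attached to cusp forms on \(\U_{r,r}\); these are the same kind of objects as the \([f]^2_{j+4}\) used in this paper. Expanding only over an eigenbasis of \(\calS_\rho(\U_{n,n})\) therefore misdescribes the kernel, and your Hecke projector must also annihilate this Eisenstein part. For that you need either that the Hecke eigensystem of \(F\) does not occur there, or the \(\Aut(\bbC)\)-equivariance of the cuspidal projection on the whole space of holomorphic forms.

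Second, the characters in your normalizing factor are off by one. For values at the half-integral points \(s'=s_0+1/2\), the factor is \(\prod_{i=0}^{2n-1}L(2s'+i,\chi_E^{i+1})\), that is, \(\zeta\) at even integers and \(L(\cdot,\chi_E)\) at odd integers, all critical. With \(\chi_E^{i}\) you would be dividing by \(\zeta(2s_0+1)\) and \(L(2s_0+2,\chi_E)\), which are not critical values, and the rationality argument would break down.
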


\begin{rem}
  \label{rem:degree-one-alg}
  Let \(h\in S_{2\nu}(\SL_2(\bbZ))\) be primitive.
  The quadratic base change of \(\iota_E(h)\) gives
  \[
    L(s_0+\tfrac{1}{2},\iota_E(h),\St) = L(s_0+\nu,h) L(s_0+\nu,h\otimes\chi_E).
  \]
  Since the adelic quotient has \(h_E\) components and each change of variables by \(r_i\) preserves the classical Petersson norm,
  \[
    (\iota_E(h),\iota_E(h))_{\U_{1,1}} =h_E(h,h)_{\SL_2(\bbZ)}.
  \]
  If \(p=p_\frakp\) satisfies \(p\nmid D_E h_E\) and the assumptions of Lemma~\ref{lem:kato-periods} hold,
  there is a \(\frakp\)-adic unit \(u\) such that
  \[
    L^{\alg}(s_0+\tfrac{1}{2},\iota_E(h),\St) = \frac{u}{h_E} L^{\alg}(s_0+\nu,h)
    L^{\alg}(s_0+\nu,h\otimes\chi_E).
  \]
\end{rem}

\subsection{Harder's conjecture}
\label{subsec:harder-cong}

We recall Harder's conjecture in the form used in this paper.
The original congruence was proposed by Harder in \cite{Harder2008Congruence}.
The form below is the corresponding congruence for the spinor \(L\)-polynomials,
as in \cite[Conjecture~3.7]{Atobe2023HarderI}.

Let \(k\geq 3\) and \(j\geq 2\) be integers with \(j\) even.
Let
\[
  f=\sum_{m=1}^{\infty}a(m;f)q^m \in S_{2k+j-2}(\SL_2(\bbZ))
\]
be a primitive elliptic cusp form.
Let \(\frakp\) be a prime ideal of \(\bbQ(f)\).
Denote by \(p_\frakp\) the rational prime below \(\frakp\).

For a Hecke eigenform \(F\in \calS_{{\det}^k\Sym^j}(\Sp_2)\) and a rational prime \(\ell\),
let \(T(\ell)\) denote the standard degree two Siegel Hecke operator at \(\ell\),
and write \(\lambda_F(T(\ell))\) for the \(T(\ell)\)-eigenvalue of \(F\).

\begin{conj}[Harder's conjecture]
  \label{conj:harder}
  Assume that \(p_\frakp>2k+j-2\) and \(\frakp\mid L^{\alg}(k+j,f)\).
  Then there exist a Hecke cusp eigenform \(F\in \calS_{{\det}^k\Sym^j}(\Sp_2)\) and a prime ideal \(\frakP\) of \(\bbQ(f)\bbQ(F)\) lying above \(\frakp\) such that,
  for every prime number \(\ell\),
  we have
  \[
    L_\ell(X,F,\Spin) \equiv L_\ell(X,f) \left(1-\ell^{k-2}X\right) \left(1-\ell^{k+j-1}X\right) \pmod{\frakP}.
  \]
\end{conj}

We call the congruence in Conjecture~\ref{conj:harder} a Harder-type congruence for \(f\).

Conjecture~\ref{conj:harder} implies,
in particular,
\[
  \lambda_F(T(\ell)) \equiv a(\ell;f)+\ell^{k-2}+\ell^{k+j-1} \pmod{\frakP}
\]
for every prime number \(\ell\).
This is the Hecke-eigenvalue form of the Harder-type congruence.

\begin{rem}
  In the scalar-valued case \(j=0\) with \(k\) even,
  the congruence in Conjecture~\ref{conj:harder} can be made an equality for a special choice of F.
  As noted in \cite[Remark~3.8(4)]{Atobe2023HarderI},
  this follows from the Saito--Kurokawa lift.
  More precisely,
  if \(F\in\calS_{{\det}^{k}}(\Sp_2)\) is the Saito--Kurokawa lift of the primitive elliptic cusp form \(f\in S_{2k-2}(\SL_2(\bbZ))\),
  then its spinor \(L\)-polynomial satisfies
  \[
    L_\ell(X,F,\Spin) = L_\ell(X,f)(1-\ell^{k-1}X)(1-\ell^{k-2}X)
  \]
  for every prime \(\ell\).
\end{rem}

\section{Hermitian spin lifts}
\label{sec:spin-lifts}
\subsection{Endoscopic classification for unitary groups}
\label{subsec:mok-class}

Let \(G_N\) be the quasi-split unitary group over \(\bbQ\) attached to \(E/\bbQ\) and of rank \(N\).
We write
\[
  (a_N,b_N) =
  \begin{cases}
    (n,n),
     & N=2n,
    \\
    (n+1,n),
     & N=2n+1.
  \end{cases}
\]
Thus \(G_N=\U_{a_N,b_N}\).
For each finite prime \(p\),
define \(K_{N,p}=K_{a_N,b_N,p}\) and \(K_{N,f}=\prod_{p<\infty}K_{N,p}\).

We fix unitary Hecke characters \(\chi_{\pm}:\bbA_E^\times/E^\times \rightarrow \bbC^\times\) such that \(\chi_+|_{\bbA_{\bbQ}^\times}=\mathbf{1}_{\bbA_{\bbQ}^\times}\),
and \(\chi_-|_{\bbA_{\bbQ}^\times}=\chi_E\).
Here \(\mathbf{1}_{\bbA_{\bbQ}^\times}\) is the trivial character,
and \(\chi_E\) is the quadratic character associated with \(E/\bbQ\).

Although \(\chi_+\) can in fact be chosen arbitrarily as long as it satisfies the above conditions,
for the sake of simplicity in the following discussion,
we set \(\chi_+\) to the trivial character \(\mathbf{1}_{\bbA_E^\times}\).

The classification due to Mok \cite{Mok2015Endoscopic} describes the discrete automorphic spectrum of \(G_N\) in terms of conjugate self-dual Arthur parameters for \(\GL_N(\bbA_E)\),
together with the \(L\)-embedding determined by \(\chi_\kappa\),
where \(\kappa\in\{\pm\}\).
We regard \(\chi_\kappa\) as a character of \(W_E\) by global class field theory.

Choose \(c\in W_{\bbQ}\setminus W_E\) such that \(\chi_\pm(c^2)=\pm1\).
Then we define the \(L\)-embedding
\[
  \xi_{\chi_\kappa}: \leftexp{L}{G_N} \rightarrow \leftexp{L}{(\Res_{E/\bbQ}\GL_N)} = (\GL_N(\bbC)\times
  \GL_N(\bbC))\rtimes W_{\bbQ}
\]
by
\begin{equation}\label{eq:xi}
  \begin{cases}
    g\rtimes 1
    \mapsto (g,{}^tg^{-1})\rtimes 1,
    \\[4pt]
    I_N\rtimes \gamma
    \mapsto (\chi_\kappa(\gamma)I_N,
    \chi_\kappa(\gamma)^{-1}I_N)\rtimes \gamma,
     & \gamma\in W_E,
    \\[4pt]
    I_N\rtimes c
    \mapsto (\kappa J,J^{-1})\rtimes c.
  \end{cases}
\end{equation}
Here the action of \(W_{\bbQ}\) on the dual group factors through \(\Gal(E/\bbQ)\).
The matrix \(J=J_{a_N,b_N}\) represents the Hermitian form defining the quasi-split unitary group \(G_N\).
We follow Mok \cite[\S\S~2.3--2.4]{Mok2015Endoscopic} for global parameters.
Let
\[
  \psi^N
  =
  \boxplus_{i=1}^r\mu_i\boxtimes\nu(d_i),
  \qquad
  \sum_{i=1}^r m_id_i=N,
\]
be an element of the set \(\widetilde{\Psi}_{\mathrm{ell}}(N)\)
of elliptic formal global parameters for \(\GL_N(\bbA_E)\).
Here \(\mu_i\) is a conjugate self-dual unitary cuspidal automorphic representation of \(\GL_{m_i}(\bbA_E)\),
\(\nu(d_i)\) is the \(d_i\)-dimensional irreducible algebraic representation of \(\SL_2(\bbC)\),
and the simple summands are mutually distinct.

Mok associates to \(\psi^N\) a group \(\calL_{\psi^N}\) and an \(L\)-homomorphism
\[
  \widetilde{\psi}^{\,N}
  :
  \calL_{\psi^N}\times\SL_2(\bbC) \rightarrow
  \leftexp{L}{(\Res_{E/\bbQ}\GL_N)}.
\]
An element \(\psi\in\Psi_2(G_N,\xi_{\chi_\kappa})\) is represented by a pair $\psi=(\psi^N,\widetilde{\psi})$,
where
\[
  \widetilde{\psi}: \calL_{\psi^N}\times\SL_2(\bbC) \rightarrow \leftexp{L}{G_N}
\]
is an \(L\)-homomorphism, considered up to \(\widehat{G}_N\)-conjugacy, satisfying
\begin{equation}\label{eq:mok-factorization}
  \widetilde{\psi}^{\,N}
  =
  \xi_{\chi_\kappa}\circ\widetilde{\psi}.
\end{equation}
This is the definition of \(\Psi_2(G_N,\xi_{\chi_\kappa})\);
see \cite[Definitions~2.4.3, 2.4.5, and~2.4.7]{Mok2015Endoscopic}.
For the datum considered here,
\(\widetilde{\psi}\) is determined by \(\psi^N\),
so we identify \(\psi\) with \(\psi^N\) when no confusion can arise.
We call \(\psi\) simple if \(r=1\),
and generic if \(d_i=1\) for every \(i\).
Although \(\nu(1)\) is the trivial representation of \(\SL_2(\bbC)\),
we retain the factor \(\nu(1)\) in the notation for a generic Arthur parameter in order to make the triviality of its Arthur \(\SL_2\)-factor explicit.

Since \(\chi_+=\mathbf1\),
we call \(\xi_{\chi_+}\) the standard \(L\)-embedding and \(\psi^N\) the standard parameter.
For each place \(v\) of \(\bbQ\),
write \(\psi_v\) and \(\psi_v^N\) for the localizations of \(\psi\) and \(\psi^N\).
Then
\[
  \psi_v^N
  \sim
  \xi_{\chi_\kappa,v}\circ\psi_v,
\]
where \(\sim\) denotes equivalence of local parameters (See \cite[\S~2.2]{Mok2015Endoscopic}).

The factorization condition \eqref{eq:mok-factorization} can be checked on each simple summand as follows.

Let \(\mu\) be a conjugate self-dual cuspidal automorphic representation of \(\GL_m(\bbA_E)\).
There are two Asai \(L\)-functions,
\(L(s,\mu,\Asai^+)\) and \(L(s,\mu,\Asai^-)\).
Exactly one of them has a pole at \(s=1\) \cite[Theorem~2.5.4(a)]{Mok2015Endoscopic}.
We denote by \(\eta(\mu)\in\{\pm1\}\) the sign characterized by the condition that \(L(s,\mu,\Asai^{\eta(\mu)})\) has a pole at \(s=1\).

We define \(\delta(\mu)\in\{\pm1\}\) by \(\eta(\mu)=(-1)^{m-1}\delta(\mu)\).
For the \(i\)-th summand of \(\psi^N\) write
\begin{align*}
  \eta_i   & = \eta(\mu_i),   \\
  \delta_i & = \delta(\mu_i), \\
  N_i      & = m_i d_i.
\end{align*}

Following Mok \cite[Remark~2.4.6 and Definition~2.4.7]{Mok2015Endoscopic},
define \(\kappa_i=\delta_i(-1)^{N_i-m_i-d_i+1}\).
Then \eqref{eq:mok-factorization} holds if and only if
\[
  \kappa_i(-1)^{N_i-1}=\kappa(-1)^{N-1}
\]
for \(1\leq i\leq r\).
This condition can be rewritten as
\begin{equation}\label{eq:mok-sign}
  \delta_i = \kappa(-1)^{N-1+m_i+d_i}.
\end{equation}

For each \(\psi\in\Psi_2(G_N,\xi_{\chi_\kappa})\),
Mok attaches local Arthur packets \(\Pi_{\psi_v}(G_N(\bbQ_v))\) and the global packet
\[
  \Pi_\psi = \left\{ \pi={\bigotimes_v}' \pi_v \;\middle|\; \pi_v\in \Pi_{\psi_v}(G_N(\bbQ_v)) \right\}.
\]
We denote by \(\epsilon_\psi:\calS_\psi\rightarrow \{\pm1\}\) the canonical sign character (\cite[Section~1.5]{Arthur2013Endoscopic}).

Let \(\psi=(\psi^N,\widetilde{\psi})\) be an element of \(\Psi_2(G_N,\xi_{\chi_\kappa})\).
Its centralizer in the dual group is
\[
  S_\psi = \Cent_{\widehat{G}_N}(\Im\widetilde{\psi}).
\]
The global component group attached to \(\psi\) is
\[
  \calS_\psi
  =
  S_\psi/S_\psi^\circ Z(\widehat{G}_N)^\Gamma,
\]
as in \cite[Definition~2.4.8]{Mok2015Endoscopic},
where \(S_\psi^\circ\) is the identity component of \(S_\psi\),
and \(Z(\widehat{G}_N)^\Gamma\) is the group of Galois-invariant elements in the center of \(\widehat{G}_N\),
with \(\Gamma=\Gal(\overline{\bbQ}/\bbQ)\).

For each place \(v\) of \(\bbQ\),
let \(\psi_v\) be the localization of \(\psi\),
and define the local component group
\[
  \calS_{\psi_v}
  =
  S_{\psi_v}/S_{\psi_v}^\circ Z(\widehat{G}_N)^{\Gamma_v},
\]
where \(\Gamma_v=\Gal(\overline{\bbQ}_v/\bbQ_v)\).
Localization induces a natural map \(\calS_\psi\rightarrow\calS_{\psi_v}\).

Mok \cite[\S~2.5]{Mok2015Endoscopic} attaches to \(\psi_v\) a pairing
\[
  \langle\,\cdot\,,\,\cdot\,\rangle_v: \calS_{\psi_v}\times\Pi_{\psi_v}\rightarrow\{\pm1\}.
\]
Thus each \(\pi_v\in\Pi_{\psi_v}\) determines a character \(\langle\cdot,\pi_v\rangle_v\) of \(\calS_{\psi_v}\).
For \(\pi\in\Pi_\psi\),
the product of these local packet characters gives a character
\[
  \langle\,\cdot\,,\pi\rangle_\psi: \calS_\psi\rightarrow\{\pm1\}.
\]

\begin{prop}[{\cite[Theorem~2.5.2]{Mok2015Endoscopic}}]
  \label{prop:mok-global-multiplicity}
  For \(\psi\in\Psi_2(G_N,\xi_{\chi_\kappa})\) define
  \[
    \Pi_\psi(\epsilon_\psi) = \left\{ \pi\in\Pi_\psi \;\middle|\; \langle\,\cdot\,,\pi\rangle_\psi = \epsilon_\psi\right\}.
  \]
  The multiplicity formula is
  \[
    L^2_{\disc} \left( G_N(\bbQ)\backslash G_N(\bbA_\bbQ) \right) \simeq
    \bigoplus_{\psi\in\Psi_2(G_N,\xi_{\chi_\kappa})} \; \bigoplus_{\pi\in\Pi_\psi(\epsilon_\psi)} \pi.
  \]
\end{prop}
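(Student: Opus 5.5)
This proposition is not proved afresh here: it is Mok's global multiplicity formula \cite[Theorem~2.5.2]{Mok2015Endoscopic}. The plan is therefore to check that every object in the statement, as set up in Section~\ref{subsec:mok-class}, agrees with Mok's, so that his theorem applies verbatim.

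\textbf{Step 1: the group.} First I would check that the group \(G_N=\U_{a_N,b_N}\), realized by the Hermitian matrix \(J_{a_N,b_N}\), is the quasi-split unitary group over \(\bbQ\) attached to \(E/\bbQ\).
\begin{itemize}
\item For \(N=2n\), the isotropic subspaces spanned by the first and the last \(n\) basis vectors exhibit a maximal isotropic subspace of dimension \(n\).
\item For \(N=2n+1\), the same holds with the one-dimensional anisotropic middle block \(I_1\).
\end{itemize}
Hence \(G_N\) is \(\bbQ\)-isomorphic to Mok's \(U_{E/\bbQ}(N)\). This isomorphism identifies the \(L\)-groups, with the Galois action \(h\mapsto J\,{}^th^{-1}J^{-1}\) fixed above; that action differs from Mok's pinned action only by an inner automorphism of \(\widehat{G}_N\).

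\textbf{Step 2: the \(L\)-embedding and the parameter set.} Next I would compare the embedding \(\xi_{\chi_\kappa}\) in \eqref{eq:xi} with Mok's twisted base-change embedding from \cite[\S~2.1]{Mok2015Endoscopic}.
\begin{itemize}
\item The character \(\chi_+\) may be taken trivial, and \(\chi_-\) restricts to \(\chi_E\); this is exactly Mok's requirement that \(\chi_\kappa\) lie in \(\calZ_E^\kappa\).
\item The image of \(c\) is \((\kappa J,J^{-1})\rtimes c\). Since \(\chi_\kappa(c^2)=\kappa\), this is compatible with the homomorphism property.
\item Any difference from Mok's choice of representing matrix is absorbed by \(\widehat{G}_N\)-conjugation.
\end{itemize}
It follows that \(\Psi_2(G_N,\xi_{\chi_\kappa})\) as defined via \eqref{eq:mok-factorization} is literally Mok's set. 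The sign criterion \eqref{eq:mok-sign} is a restatement of \cite[Remark~2.4.6, Definition~2.4.7]{Mok2015Endoscopic}, though it is not needed for the multiplicity formula itself.

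\textbf{Step 3: packets, component groups and the sign character.} I would then check these against Mok.
\begin{itemize}
\item The global packet \(\Pi_\psi\) is the restricted tensor product of the local packets, with almost all components unramified.
\item The component groups \(\calS_\psi\) and \(\calS_{\psi_v}\) are quotients by \(S_\psi^\circ Z(\widehat{G}_N)^\Gamma\) and its local analogue, exactly as in \cite[Definition~2.4.8]{Mok2015Endoscopic}.
\item The pairing \(\langle\cdot,\pi\rangle_\psi\) is the product of Mok's local pairings, which are normalized with respect to a fixed global Whittaker datum.
\item The sign \(\epsilon_\psi\) is Arthur's canonical character defined through symplectic root numbers of the pairs of summands \(\mu_i\boxtimes\nu(d_i)\).
\end{itemize}
With these identifications, Mok's Theorem~2.5.2 gives precisely the stated decomposition of \(L^2_{\disc}\), for each fixed \(\kappa\). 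The two choices of \(\kappa\) give two parametrizations of the same spectrum.

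\textbf{Main obstacle.} The only delicate point is the normalization of the local pairings. Their product is independent of the choice of global Whittaker datum, but individual local characters depend on it, and Mok's theorem is stated for his fixed data. I would address this by fixing the standard global Whittaker datum for the Siegel-parabolic realization of \(G_N\) and noting that any two global data are conjugate under the adjoint group. By the standard argument (Arthur \cite[\S~1.5]{Arthur2013Endoscopic}, adapted by Mok), the local characters then change by characters of \(\calS_{\psi_v}\) whose product over all \(v\) is trivial on \(\calS_\psi\). Hence the set \(\Pi_\psi(\epsilon_\psi)\) is independent of this choice, and the proposition follows.
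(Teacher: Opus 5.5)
Your proposal is correct and takes the same route as the paper, which gives no independent argument and simply invokes Mok's multiplicity formula \cite[Theorem~2.5.2]{Mok2015Endoscopic}, after setting up \(G_N\), \(\xi_{\chi_\kappa}\), \(\Psi_2(G_N,\xi_{\chi_\kappa})\), the packets, the component groups and \(\epsilon_\psi\) to follow Mok's conventions. Your identification checks and your remark on Whittaker normalizations are bookkeeping the paper leaves implicit; they are harmless, since Mok's theorem is already stated for his fixed data.
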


We recall the local spherical property.
We say that \(\psi_p\) is spherical if \(\Pi_{\psi_p}\) contains a \(K_{N,p}\)-spherical representation (see \cite[\S~7.1]{Mok2015Endoscopic}).
\begin{lem}[{\cite[Theorem~2.5.1 (a),\S~7.6]{Mok2015Endoscopic}, \cite{Shahidi1981Certain,Shahidi1990Plancherel}}]
  \label{lem:sphericalApacket}
  Let \(p\) be a finite prime,
  and let \(\psi_p\) be a local Arthur parameter for \(G_N(\bbQ_p)\).
  Assume that \(\Pi_{\psi_p}\) contains a \(K_{N,p}\)-spherical representation \(\pi_p^{\sph}\).
  Then \(\pi_p^{\sph}\) is the unique \(K_{N,p}\)-spherical member of \(\Pi_{\psi_p}\),
  and its character on the local component group is trivial.
\end{lem}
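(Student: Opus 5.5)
The plan is to deduce the lemma from the structure of Mok's local packets at an unramified prime, together with the Satake classification of spherical representations. Let \(\pi_p^{\sph}\in\Pi_{\psi_p}\) be \(K_{N,p}\)-spherical. Since \(\Pi_{\psi_p}\) contains an unramified member, the parameter \(\psi_p\) must itself be unramified. Concretely, its restriction to the Weil–Deligne \(\SL_2\) is trivial, and its restriction to \(W_{\bbQ_p}\) is trivial on inertia, up to the twist built into \(\xi_{\chi_\kappa}\). I will take this from the spherical analysis in \cite[\S~7.1, \S~7.6]{Mok2015Endoscopic}. An unramified \(\psi_p\) then has the form
\[
  \psi_p=\bigoplus_i \lambda_i\boxtimes\nu(b_i)\oplus(\text{conjugate-dual terms}),
\]
where each \(\lambda_i\) is an unramified character, and the packet is built by parabolic induction from a Levi subgroup \(M\) of \(G_N\). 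On \(M\), the parameter is a product of characters \(\lambda_i\circ\det\) of general linear factors, times a parameter for a smaller unitary group whose \(L\)-packet is tempered and unramified.

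The key steps are as follows. First, I would use \cite[Theorem~2.5.1(a)]{Mok2015Endoscopic} to write \(\Pi_{\psi_p}\) as the set of irreducible constituents (or Langlands quotients) of \(I_P^G(\sigma)\), where \(\sigma\) runs over the packet of the Levi parameter. Second, by Frobenius reciprocity and the Iwasawa decomposition \(G_N(\bbQ_p)=P(\bbQ_p)K_{N,p}\), at most one constituent of each induced representation is \(K_{N,p}\)-spherical, and one occurs only if \(\sigma\) is \(K_M\)-spherical. Third, the unramified tempered \(L\)-packet on the smaller unitary group contains exactly one spherical member. This is where \cite{Shahidi1981Certain,Shahidi1990Plancherel} enter: Shahidi's results identify the unique generic member of a tempered packet, and for unramified data it is the spherical one, with respect to a Whittaker datum compatible with \(K_{N,p}\). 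These steps give uniqueness, since two spherical members of \(\Pi_{\psi_p}\) would share the same Satake parameter, which is determined by \(\psi_p\), and hence would coincide.

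For the triviality of the character, I would use Mok's normalization of the pairing \(\langle\cdot,\cdot\rangle_v\). This pairing is fixed so that, in the generic tempered case, the generic member relative to the fixed Whittaker datum has trivial character. Mok then transfers it to nontempered \(\psi_p\) through the Aubert/Moeglin construction, or through the induction compatibility of \(\langle\cdot,\cdot\rangle\), which is built into the characterization of the packet by twisted endoscopic character identities. Combining this with the Shahidi identification of the spherical constituent as the base point gives \(\langle\cdot,\pi_p^{\sph}\rangle_p=1\) on \(\calS_{\psi_p}\).

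I expect the main obstacle to be the nontempered case, that is, when \(\psi_p\) has nontrivial Arthur \(\SL_2\)-factors. There the spherical member is a Langlands quotient rather than a generic representation, so Shahidi's genericity argument does not apply directly. To handle it, I would invoke Mok's \S~7.6, which treats unramified Arthur packets. That section shows that the packet is compatible with parabolic induction from a Levi subgroup on which the parameter is "anti-tempered" times tempered, and that the induced pairing restricts to the tempered pairing. The spherical member is then the one induced from the spherical, and hence trivial-character, member of the tempered packet on \(M\).
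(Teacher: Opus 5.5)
Your tempered case is essentially fine: the Iwasawa decomposition, a unique spherical constituent, spherical \(=\) generic for the \(K_{N,p}\)-compatible Whittaker datum, and the generic member having trivial character. The gap is in the reduction of the nontempered case to it. For an unramified \(\psi_p\), only pairs \(\lambda\boxtimes\nu(b)\oplus(\lambda^c)^\vee\boxtimes\nu(b)\) can be absorbed into a \(\GL\)-block of a Levi subgroup. A conjugate self-dual summand \(\lambda\boxtimes\nu(b)\) of multiplicity one with \(b>1\) stays on the unitary factor. For instance, let \(\psi_p\) be a sum of two or more distinct conjugate self-dual summands \(\lambda_i\boxtimes\nu(b_i)\) with some \(b_i>1\); this already happens for \(\U_{2,2}\) at an inert prime. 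Then \(\psi_p\) is discrete, so its packet is not induced from any proper Levi subgroup. Moreover \(\calS_{\psi_p}\neq 1\), so the triviality of \(\langle\cdot,\pi_p^{\sph}\rangle\) is a genuine assertion.

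Your last paragraph does not handle this. ``Anti-tempered times tempered on \(M\)'' still leaves an anti-tempered unitary factor, and its spherical member is nontempered, so it is not the spherical member of any tempered packet. One can identify it with the Aubert dual of the generic member of the tempered packet of \(\widehat{\psi}_p\) (the two \(\SL_2\)'s interchanged). However, Aubert duality is compatible with endoscopic transfer only up to explicit signs, so you must still show that the pairing is not twisted. That is the real content of the lemma, and Shahidi's genericity results do not supply it.

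Two further steps are also unsupported. Your uniqueness step needs all members of \(\Pi_{\psi_p}\) to share the infinitesimal character of \(\psi_p\). Your opening claim, that a spherical member forces \(\psi_p\) to be trivial on inertia and on the Deligne \(\SL_2\), is asserted rather than proved; an unramified infinitesimal character alone does not give it.

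The paper gives no argument beyond citing Mok (Theorem~2.5.1(a) and \S~7.6) and Shahidi. A standard proof that avoids your case analysis applies Mok's local character identities to \(f=\mathbf{1}_{K_{N,p}}\). Take \(\psi_p\) unramified and \(K_{N,p}\) hyperspecial. The ordinary and twisted fundamental lemmas turn the endoscopic side attached to each \(x\in\calS_{\psi_p}\) into Whittaker-normalized twisted traces of the unit element on unramified representations of general linear groups. Each of these equals \(1\), since the normalized intertwining operator acts by \(+1\) on the spherical line. Hence \(\sum_{\pi\in\Pi_{\psi_p}}\langle x,\pi\rangle\dim\pi^{K_{N,p}}=1\) for all \(x\), and orthogonality of characters gives uniqueness and triviality at once. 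If \(\psi_p\) is not unramified, essentially the same computation gives \(0\) for all \(x\), so no spherical member exists. This also settles your opening claim.

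Finally, these inputs, like yours, need \(p\) unramified in \(E\). The lemma as stated, and as used in constructing \(\Lift_E(F)\), also covers \(p\mid D_E\). There \(G_N\) is ramified and \(K_{N,p}\) is not hyperspecial, and your argument does not reach that case.
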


\subsection{\texorpdfstring{Spin \(L\)-homomorphism}{Spin L-homomorphism}}
\label{subsec:spin-map}

In this section we construct explicitly the \(L\)-homomorphism
\[
  \iota: \leftexp{L}{H} \rightarrow \leftexp{L}{G}
\]
associated with the Spin representation,
where \(H = \PGSp_2\) and \(G = \U_{2,2}\).

The character and cocharacter lattices of \(H\) are given by
\begin{align*}
  X_H      & = \{ a_1 e_1 + a_2 e_2 \mid a_1,a_2\in\bbZ,\ a_1+a_2\in2\bbZ \,\},        \\
  X_H^\vee & = \bbZ\frac{e_1^\vee+e_2^\vee}{2} \oplus \bbZ\frac{e_1^\vee-e_2^\vee}{2},
\end{align*}
equipped with the perfect pairing \(\langle e_i, e_j^\vee \rangle = \delta_{ij}\).
The root and coroot systems are
\[
  \Phi_H = \{ \pm e_1 \pm e_2,\ \pm 2e_1,\ \pm 2e_2 \},
\]
and
\[
  \Phi_H^\vee = \{ \pm e_1^\vee \pm e_2^\vee,\ \pm e_1^\vee,\ \pm e_2^\vee \}.
\]
We fix the simple roots
\[
  \{\alpha_1 = e_1 - e_2,\ \alpha_2 = 2e_2\}
\]
with corresponding simple coroots
\[
  \{\alpha_1^\vee = e_1^\vee - e_2^\vee,\ \alpha_2^\vee = e_2^\vee\}.
\]
The dual group is \(\hat H = \Spin_5(\bbC)\),
which is of type \(B_2\).

The character and cocharacter lattices of \(G\) are
\begin{align*}
  X_G      & = \bigoplus_{i=1}^4 \bbZ f_i,      \\
  X_G^\vee & = \bigoplus_{i=1}^4 \bbZ f_i^\vee.
\end{align*}
The root and coroot systems are
\begin{align*}
  \Phi_G      & = \{ f_i - f_j \mid i \ne j \},           \\
  \Phi_G^\vee & = \{ f_i^\vee - f_j^\vee \mid i \ne j \}.
\end{align*}
We choose the simple roots
\[
  \{\beta_1 = f_1 - f_2,\ \beta_2 = f_2 - f_3,\ \beta_3 = f_3 - f_4\}
\]
with corresponding simple coroots
\[
  \{\beta_1^\vee = f_1^\vee - f_2^\vee,\ \beta_2^\vee = f_2^\vee - f_3^\vee,\ \beta_3^\vee = f_3^\vee - f_4^\vee\}.
\]
The dual group is \(\hat G = \GL_4(\bbC)\).

Since \(H\) is split over \(\bbQ\),
the action of \(W_{\bbQ}\) on \(\hat{H}\) is trivial.
For \(G\),
the action of \(W_{\bbQ}\) factors through \(\Gal(E/\bbQ) = \{1,\sigma\}\),
and the non-trivial element \(\sigma\) acts on \(\hat G\) by the outer automorphism
\[
  h \mapsto J_{2,2}\,{}^t h^{-1} J_{2,2}^{-1}.
\]
On the based root datum this induces
\begin{align*}
  \sigma(\beta_1)      & = \beta_3,      & \sigma(\beta_2)      & = \beta_2,      & \sigma(\beta_3)      & = \beta_1,      \\
  \sigma(\beta_1^\vee) & = \beta_3^\vee, & \sigma(\beta_2^\vee) & = \beta_2^\vee, & \sigma(\beta_3^\vee) & = \beta_1^\vee.
\end{align*}
Recalling the convention fixed in Section~\ref{subsec:groups}, the \(L\)-groups of \(H\) and \(G\) are the semidirect products
\begin{align*}
  \leftexp{L}{H} & = \hat H \rtimes W_{\bbQ} = \Spin_5(\bbC) \times W_{\bbQ}, \\
  \leftexp{L}{G} & = \hat G \rtimes W_{\bbQ} = \GL_4(\bbC) \rtimes W_{\bbQ}.
\end{align*}

We define \(\iota: \leftexp{L}{H} \to \leftexp{L}{G}\) using the Spin representation \(\rho_{\Spin}\) of \(\hat H = \Spin_5(\bbC)\) by \(\iota(h,\gamma)=(\rho_{\Spin}(h),\gamma)\) for \(h\in\hat H\) and \(\gamma\in W_{\bbQ}\).
We note that \(\rho_{\Spin}\) identifies \(\Spin_5(\bbC)\) with the symplectic group \(\Sp_2(\bbC)\subset\GL_4(\bbC)\).

Identifying \(X_{\hat G} = X_{G}^\vee\) and \(X_{\hat H} = X_{H}^\vee\),
and ordering the weights compatibly with the chosen simple roots of \(G\),
we define \(\iota^*: X_G^\vee \to X_H^\vee\) by
\begin{align*}
  \iota^*(f_1^\vee) & = \frac{e_1^\vee + e_2^\vee}{2},  & \iota^*(f_2^\vee) & = \frac{e_1^\vee - e_2^\vee}{2},  \\
  \iota^*(f_3^\vee) & = \frac{-e_1^\vee + e_2^\vee}{2}, & \iota^*(f_4^\vee) & = \frac{-e_1^\vee - e_2^\vee}{2}.
\end{align*}
Evaluating on the simple coroots of \(G\) gives
\begin{align*}
  \iota^*(\beta_1^\vee) & = \alpha_2^\vee, \\
  \iota^*(\beta_2^\vee) & = \alpha_1^\vee, \\
  \iota^*(\beta_3^\vee) & = \alpha_2^\vee.
\end{align*}
Dually,
in the rational character spaces,
we have
\begin{align*}
  \iota_*(e_1) & = \frac12(f_1+f_2-f_3-f_4), \\
  \iota_*(e_2) & = \frac12(f_1-f_2+f_3-f_4).
\end{align*}
Although \(e_1\) and \(e_2\) need not themselves belong to \(X_H\),
these formulas determine the integral map on \(X_H\).
In particular,
\begin{align*}
  \iota_*(\alpha_1) & = \beta_2,         \\
  \iota_*(\alpha_2) & = \beta_1+\beta_3.
\end{align*}
Thus the two outer nodes of the \(A_3\) Dynkin diagram are identified under folding,
while the middle node corresponds to the remaining simple root of \(C_2\).

Since \(W_{\bbQ}\) acts trivially on \(\hat H\),
the map \(\iota:\leftexp{L}{H}\rightarrow\leftexp{L}{G}\) is \(W_{\bbQ}\)-equivariant and therefore defines an \(L\)-homomorphism.

\subsection{Construction of the Hermitian spin lift}
\label{subsec:spin-lift}
Let \(k\ge 4\) and let \(j>0\) be even.
The Hermitian spin lift will be defined by applying the classification of Mok to the parameter obtained from the Siegel parameter through the spin \(L\)-homomorphism
\[
  \iota:\leftexp{L}{\PGSp_2}\rightarrow\leftexp{L}{\U_{2,2}}
\]
constructed above.

For simplicity,
we put
\[
  \tau_{k,j} = {\det}^{j/2+2}\Sym^{k-3}\boxtimes{\det}^{j/2+2}\Sym^{k-3}.
\]
\begin{prop}\label{prop:spin-lift}
  For a Hecke cusp eigenform \(F\in\calS_{{\det}^{k}\Sym^{j}}(\Sp_2)\),
  there exists a Hecke cusp eigenform \(\Lift_E(F)\in\calS_{\tau_{k,j}}(\U_{2,2})\) such that
  \[
    L(s,\Lift_E(F),\St) = L\left(s+\frac{2k+j-3}{2},F,\Spin\right)
    L\left(s+\frac{2k+j-3}{2},F,\Spin\otimes\chi_E\right).
  \]
\end{prop}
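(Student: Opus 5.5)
Starting from \(F\), we build a discrete automorphic representation of \(\U_{2,2}\) via Mok's multiplicity formula (Proposition~\ref{prop:mok-global-multiplicity}) and then read off a holomorphic vector of weight \(\tau_{k,j}\). Let \(\Pi_F\) be the cuspidal automorphic representation of \(\PGSp_2(\bbA_\bbQ)\) generated by \(F\). The weight \({\det}^k\Sym^j\) with \(k\ge4\) makes \(\Pi_{F,\infty}\) a holomorphic discrete series with regular infinitesimal character. By Arthur's classification for \(\GSp_4\) (or the Gan--Takeda/Weissauer transfer along \(\Spin_5\simeq\Sp_2(\bbC)\subset\GL_4(\bbC)\)), \(\Pi_F\) has a transfer \(\Pi\) to \(\GL_4(\bbA_\bbQ)\) with \(L(s,\Pi)=L(s,F,\Spin)\) after unitary normalization. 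Since \(F\) has full level, it is not a Saito--Kurokawa lift when \(j>0\), and it cannot be Yoshida type either. Hence \(\Pi\) is cuspidal and symplectic, i.e.\ \(L(s,\Pi,\wedge^2)\) has a pole.

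The next step is base change. Let \(\mu=\BC_{E/\bbQ}(\Pi)\) on \(\GL_4(\bbA_E)\) (Arthur--Clozel). It is conjugate self-dual because \(\Pi\) is self-dual. We must check it is cuspidal: this fails only if \(\Pi\simeq\Pi\otimes\chi_E\). In that case \(\Pi\) would be automorphically induced from \(E\), which forces ramification at primes dividing \(D_E\); this contradicts \(\Pi\) being unramified everywhere finite. Its Asai sign follows from
\[
  L(s,\mu,\Asai^{+})L(s,\mu,\Asai^-)=L(s,\mu\times\mu^c)=L(s,\Pi\times\Pi)L(s,\Pi\times\Pi\otimes\chi_E),
\]
together with \(L(s,\Asai^{+})=L(s,\Pi,\Sym^2)L(s,\Pi,\wedge^2\otimes\chi_E)\) and \(L(s,\Asai^{-})=L(s,\Pi,\wedge^2)L(s,\Pi,\Sym^2\otimes\chi_E)\) (with the correct convention). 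The pole of \(L(s,\Pi,\wedge^2)\) then gives \(\eta(\mu)=-1\), hence \(\delta(\mu)=(-1)^{3}\eta(\mu)=+1\). Taking \(\psi=\mu\boxtimes\nu(1)\) with \(N=4\), \(m=4\), \(d=1\), condition \eqref{eq:mok-sign} reads \(\delta=\kappa(-1)^{3+4+1}=\kappa\). So \(\psi\in\Psi_2(\U_{2,2},\xi_{\chi_+})\) for the standard embedding. This matches the factorization through \(\iota\) of Section~\ref{subsec:spin-map}, since \(\iota\) lands in \(\Sp_2(\bbC)\) with trivial \(W_\bbQ\)-twist. The parameter is simple generic, so \(\calS_\psi\) is trivial and \(\epsilon_\psi=1\). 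Every member of \(\Pi_\psi\) therefore occurs discretely, and, being generic and simple, cuspidally.

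Now we choose the local components. At each finite \(p\), \(\psi_p\) is unramified, since \(\mu_p\) is unramified when \(p\nmid D_E\). At ramified \(p\) we need \(\Pi_{\psi_p}\) to contain a \(K_{2,2,p}\)-spherical member. The parameter is the restriction of an unramified \(\GL_4(\bbQ_p)\)-parameter into \(\Sp_4\), so it factors through the unramified \(L\)-group of \(\U_{2,2}\) for the special maximal compact \(K_{2,2,p}\), and Lemma~\ref{lem:sphericalApacket} supplies the unique spherical \(\pi_p\). At infinity, \(\psi_\infty\) is a discrete-series \(L\)-parameter for \(\U(2,2)\), and we pick the holomorphic member. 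Its Harish-Chandra parameter is read off from the Hodge weights of \(\Pi_\infty\), \(\pm\tfrac{2k+j-3}{2},\pm\tfrac{j+1}{2}\). The main obstacle is verifying that the minimal \(K\)-type of this member is exactly \(\tau_{k,j}={\det}^{j/2+2}\Sym^{k-3}\boxtimes{\det}^{j/2+2}\Sym^{k-3}\). This is a Blattner-formula bookkeeping step: we match \(\rho\)-shifts for \(\U(2,2)\), with the ordering of characters given by \(\iota^*\), and check that the resulting highest weights \((j/2+k-1,j/2+2;j/2+k-1,j/2+2)\) are dominant. We also need \(k\ge4\), which makes the parameter regular enough for the holomorphic member to be non-degenerate.

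Once these components are fixed, \(\pi=\otimes'\pi_v\) lies in \(\Pi_\psi(\epsilon_\psi)\) and occurs in \(L^2_{\disc}\), and it is cuspidal because \(\psi\) is generic. Its lowest-weight vector, fixed by \(K_{2,2,f}\), gives a Hecke eigenform \(\Lift_E(F)\in\calS_{\tau_{k,j}}(\U_{2,2})\). For the \(L\)-function, the Satake parameter of \(\pi_p\) corresponds to \(\mu_p\) under \(\xi_{\mathbf 1}\). Hence \(L(s,\Lift_E(F),\St)=L(s,\mu)L(s,\mu^{\vee})=L(s,\mu)^2\) up to conjugation. Since \(\mu\) is conjugate self-dual, \(\St\) of \(\pi\) is the Asai-free standard factor \(L(s,\mu)=L(s,\Pi)L(s,\Pi\otimes\chi_E)\) (the doubling of \(\GL_{4}\times\GL_4\) collapses to \(L(s,\mu)\) as a degree-8 function over \(\bbQ\)). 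This must be checked against the explicit split, inert and ramified formulas, noting that at ramified \(p\) the factor \(L_p(s,\Pi\otimes\chi_E)=1\) matches the degree-4 local polynomial. Finally, converting to the arithmetic normalization \(L(s,F,\Spin)=L(s-\tfrac{2k+j-3}{2},\Pi)\) yields the stated shift.
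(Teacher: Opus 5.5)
Your proposal is correct and follows the paper's proof essentially step for step: spin transfer of \(\pi_F\) to a cuspidal symplectic \(\Pi\) on \(\GL_4(\bbA_\bbQ)\), cuspidality of the base change \(\mu\) (forced by the ramification of \(\chi_E\) at \(q\mid D_E\)), the Asai computation giving \(\eta(\mu)=-1\), \(\delta(\mu)=+1\), \(\kappa=+\), triviality of \(\calS_\psi\), and the choice of spherical finite members together with the holomorphic discrete series member of lowest \(K\)-type \(\tau_{k,j}\). The one slip is the line \(L(s,\Lift_E(F),\St)=L(s,\mu)L(s,\mu^{\vee})=L(s,\mu)^2\), which is false and should be deleted: by definition \(\St\) is \(\xi_{\mathbf 1}\) followed by the representation induced from the standard one, so \(L(s,\pi,\St)\) is just the \(L\)-function of \(\mu\) over \(E\) (at split \(\ell=ww^c\) the factor \(\det(1-sX)\det(1-s^{-1}X)\) is \(L_w(s,\mu)L_{w^c}(s,\mu)\) with \(\mu_{w^c}\simeq\mu_w^\vee\)), and this equals \(L(s,\Pi)L(s,\Pi\otimes\chi_E)\), as you ultimately conclude.
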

\begin{proof}
  Let \(\pi_F\) be the cuspidal automorphic representation of \(\PGSp_2(\bbA_\bbQ)\) generated by \(F\).
  Let \(\psi_F^{\PGSp}\) be its global Arthur parameter for \(\PGSp_2\).
  In this level-one setting,
  the condition \(j>0\) forces \(\psi_F^{\PGSp}\) to be simple.

  For each place \(v\) of \(\bbQ\),
  let \(\varphi_{F,v}:W_{\bbQ_v}'\rightarrow\leftexp{L}{\PGSp_2}\) be the local Langlands parameter of \(\pi_{F,v}\),
  where \(W_{\bbQ_v}'\) denotes the Weil--Deligne group.
  Let \(\varphi_{F,v}^{\Spin}:W_{\bbQ_v}' \rightarrow\GL_4(\bbC)\) be the four-dimensional parameter obtained from \(\varphi_{F,v}\) by applying \(\rho_{\Spin}\) to its dual-group component.
  We denote by \(\Pi_{F,v}^{\Spin}\) the representation of \(\GL_4(\bbQ_v)\) corresponding to \(\varphi_{F,v}^{\Spin}\).

  Since the global Arthur parameter \(\psi_F^{\PGSp}\) is simple,
  its spin transfer is represented by a cuspidal automorphic
  representation
  \(\Pi_F^{\Spin} = \bigotimes_v'\Pi_{F,v}^{\Spin}\) of \(\GL_4(\bbA_\bbQ)\).
  With our normalization,
  \[
    L(s,\Pi_F^{\Spin},\St) = L\left( s+\frac{2k+j-3}{2}, F, \Spin \right).
  \]

  Put \(\Pi_{F,E}^{\Spin} = \BC_{E/\bbQ}(\Pi_F^{\Spin})\),
  where \(\BC_{E/\bbQ}\) denotes quadratic base change from \(\GL_4(\bbA_\bbQ)\) to \(\GL_4(\bbA_E)\).
  The representation \(\Pi_{F,E}^{\Spin}\) is cuspidal.
  Indeed,
  if it were not cuspidal,
  the quadratic base-change criterion would give
  \[
    \Pi_F^{\Spin} \simeq \Pi_F^{\Spin}\otimes\chi_E.
  \]
  Let \(q\mid D_E\).
  Since \(F\) has level one,
  \(\Pi_{F,q}^{\Spin}\) is unramified.
  On the other hand,
  \(\chi_{E,q}\) is ramified,
  so \(\Pi_{F,q}^{\Spin}\otimes\chi_{E,q}\) is ramified.
  This is a contradiction.

  We next determine the parity of \(\Pi_{F,E}^{\Spin}\) in the notation of Mok.
  Since the spin representation \(\rho_{\Spin}\) preserves a non-degenerate alternating form,
  \(\Pi_F^{\Spin}\) is self-dual of symplectic type.
  Its quadratic base change \(\Pi_{F,E}^{\Spin}\) is therefore conjugate self-dual.

  The Asai \(L\)-functions of \(\Pi_{F,E}^{\Spin}\) satisfy
  \begin{align*}
    L(s,\Pi_{F,E}^{\Spin},\Asai^+) & = L(s,\Pi_F^{\Spin},\Sym^2) L(s,\Pi_F^{\Spin}\otimes\chi_E,\wedge^2), \\
    L(s,\Pi_{F,E}^{\Spin},\Asai^-) & = L(s,\Pi_F^{\Spin},\wedge^2) L(s,\Pi_F^{\Spin}\otimes\chi_E,\Sym^2),
  \end{align*}
  as in \cite{Krishnamurthy2003Asai}.
  Since \(\wedge^2\rho_{\Spin}\simeq\mathbf{1}_{\bbA_E}\oplus\St\),
  we have
  \[
    L(s,\Pi_F^{\Spin},\wedge^2) = \zeta(s)L(s,\pi_F,\St).
  \]
  Hence \(L(s,\Pi_F^{\Spin},\wedge^2)\) has a pole at \(s=1\).
  Moreover,
  \(L(s,\Pi_F^{\Spin}\otimes\chi_E,\Sym^2)\) is holomorphic and non-zero at \(s=1\).
  It follows that \(L(s,\Pi_{F,E}^{\Spin},\Asai^-)\) has a pole at \(s=1\).
  Thus
  \[
    \eta(\Pi_{F,E}^{\Spin})=-1.
  \]
  Since \(m=4\),
  this gives \(\delta(\Pi_{F,E}^{\Spin})=+1\).
  Applying \eqref{eq:mok-sign} with \(N=m=4\) and \(d=1\),
  we obtain \(\kappa=+\).

  Put
  \[
    \psi_{F,E}^{N} = \Pi_{F,E}^{\Spin}\boxtimes\nu(1).
  \]
  This is a simple generic global parameter of degree \(4\) on the general linear side.
  The preceding parity computation shows that \(\psi_{F,E}^{N}\) is compatible with \(\xi_{\chi_+}\).
  Hence it determines a global Arthur parameter
  \[
    \psi_{F,E} \in \Psi_2(\U_{2,2},\xi_{\chi_+})
  \]
  whose associated standard parameter is \(\psi_{F,E}^{N}\).

  We finally identify its localizations.
  For each place \(v\) of \(\bbQ\),
  let \(\psi_{F,E,v}\) and \(\psi_{F,E,v}^{N}\) denote the localizations of \(\psi_{F,E}\) and \(\psi_{F,E}^{N}\),
  respectively.
  By the compatibility of localization with the standard \(L\)-embedding,
  \[
    \psi_{F,E,v}^{N} \sim \xi_{\chi_+,v}\circ\psi_{F,E,v},
  \]
  where \(\sim\) denotes equivalence of local parameters.

  Since \(\chi_+\) is trivial,
  the local parameter \(\psi_{F,E,v}^{N}\) is obtained from \(\varphi_{F,v}^{\Spin}\) by local base change to the places of \(E\) above \(v\).
  By definition,
  this parameter is represented by \(\xi_{\chi_+,v}\circ\iota\circ\varphi_{F,v}\).
  Consequently,
  \(\psi_{F,E,v} \sim \iota\circ\varphi_{F,v}\) as \(\widehat{\U}_{2,2}\)-conjugacy classes of local parameters.

  We now choose the archimedean member of the packet.
  The Harish--Chandra parameter of the holomorphic discrete series representation \(\pi_{F,\infty}\) is
  \[
    \lambda_F = (k+j-1)e_1+(k-2)e_2.
  \]
  Under the spin embedding \(\iota: \leftexp{L}{\PGSp_2} \rightarrow \leftexp{L}{\U_{2,2}}\),
  the corresponding infinitesimal character is
  \[
    \frac{2k+j-3}{2}(f_1-f_4) + \frac{j+1}{2}(f_2-f_3).
  \]
  Since this infinitesimal character is regular,
  the real packet \(\Pi_{\psi_{F,E,\infty}}\) contains a holomorphic discrete series representation.
  We denote this member by \(\pi_{U,\infty}^{\hol} \in \Pi_{\psi_{F,E,\infty}}\).
  Its lowest \(K_{2,2,\infty}\)-type has highest weight
  \[
    \left( \frac{2k+j}{2}-1,\, \frac{j}{2}+2;\, \frac{2k+j}{2}-1,\, \frac{j}{2}+2 \right),
  \]
  which is precisely the highest weight defining \(\tau_{k,j}\).

  We next choose the local members at the finite places.
  Since \(F\) has full level,
  \(\pi_{F,p}\) is spherical for every finite prime \(p\).
  Hence its local parameter \(\varphi_{F,p}\) is unramified.
  After choosing an unramified representative,
  \(\varphi_{F,p}\) factors through the \(L\)-group of a maximal torus of \(\PGSp_2\) and is trivial on inertia.
  Its image under \(\iota\) factors through the \(L\)-group of the standard minimal Levi subgroup \(M_0\) of \(\U_{2,2}\),
  and the corresponding character of \(M_0(\bbQ_p)\) is trivial on \(M_0(\bbQ_p)\cap K_{2,2,p}\).
  Since \(\psi_{F,E,p} \sim \iota\circ\varphi_{F,p}\),
  it follows that \(\psi_{F,E,p}\) is spherical in the sense of \cite[\S~7.1]{Mok2015Endoscopic}.

  The local packet \(\Pi_{\psi_{F,E,p}}\) therefore contains a \(K_{2,2,p}\)-spherical representation.
  By Lemma~\ref{lem:sphericalApacket},
  this representation is unique,
  and its local packet character is trivial.
  We denote it by \(\pi_{U,p}^{\sph} \in \Pi_{\psi_{F,E,p}}.\)

  Define
  \[
    \pi_U = \left( {\bigotimes_{p<\infty}}' \pi_{U,p}^{\sph} \right) \otimes \pi_{U,\infty}^{\hol}.
  \]
  By construction,
  \(\pi_U\) is a member of the global packet \(\Pi_{\psi_{F,E}}\).

  Since \(\psi_{F,E}\) is a simple generic parameter,
  consisting of a single cuspidal constituent with trivial Arthur \(\SL_2\)-factor,
  its global component group is trivial.
  Thus both the canonical character \(\epsilon_{\psi_{F,E}}\) and the global packet character \(\langle\,\cdot\,,\pi_U\rangle_{\psi_{F,E}}\) are trivial.
  Consequently,
  \[
    \pi_U \in \Pi_{\psi_{F,E}} (\epsilon_{\psi_{F,E}}).
  \]
  Proposition~\ref{prop:mok-global-multiplicity} shows that \(\pi_U\) occurs with multiplicity one in \(L^2_{\disc} \left( \U_{2,2}(\bbQ) \backslash \U_{2,2}(\bbA_\bbQ) \right)\).

  Choose a non-zero vector in the lowest \(K_{2,2,\infty}\)-type of \(\pi_{U,\infty}^{\hol}\) and the normalized spherical vector in \(\pi_{U,p}^{\sph}\) at every finite prime \(p\).
  The resulting automorphic form is a holomorphic Hermitian Hecke cusp eigenform \(\Lift_E(F) \in \calS_{\tau_{k,j}}(\U_{2,2})\) generating \(\pi_U\).

  Finally,
  the standard parameter of \(\pi_U\) is \(\psi_{F,E}^{N} = \Pi_{F,E}^{\Spin}\boxtimes\nu(1)\).
  Hence
  \begin{align*}
    L(s,\Lift_E(F),\St) & = L(s,\pi_U,\St)                                                                                  \\
                        & = L(s,\Pi_{F,E}^{\Spin},\St)                                                                      \\
                        & = L(s,\Pi_F^{\Spin},\St) L(s,\Pi_F^{\Spin}\otimes\chi_E,\St)                                      \\
                        & = L\left(s+\frac{2k+j-3}{2},F,\Spin\right) L\left(s+\frac{2k+j-3}{2},F,\Spin\otimes\chi_E\right).
  \end{align*}
  This proves the proposition.
\end{proof}

We call the resulting form \(\Lift_E(F)\) the Hermitian spin lift of \(F\) with respect to the quadratic extension \(E/\bbQ\).
\begin{rem}
  Dummigan conjectured the existence of the lift \(\Lift_E(F)\) in \cite[\S~6]{Dummigan2014Eisenstein},
  in the more general framework of a functorial lift from \(\SO(n+1,n)\) to \(\U(n,n)\).
\end{rem}

\begin{prop}
  \label{prop:spin-surj}
  Let \(G \in \calS_{\tau_{k,j}}(\U_{2,2})\) be a Hecke cusp eigenform on \(\U_{2,2}\),
  and let \(\pi_G\) be the automorphic representation generated by \(G\).
  Assume the following conditions.
  \begin{enumerate}
    \item
          The global Arthur parameter \(\psi_G\) of \(\pi_G\) is simple generic,
          with standard parameter
          \[
            \psi_G^N=\Pi_G\boxtimes\nu(1)
          \]
          for a conjugate self-dual cuspidal automorphic representation \(\Pi_G\) of \(\GL_4(\bbA_E)\).

    \item
          There is a cuspidal automorphic representation \(\Pi\) of \(\GL_4(\bbA_\bbQ)\) whose base change to \(E\) is \(\Pi_G\).
          We assume that \(\Pi_q\) is unramified for every finite rational prime \(q\).
  \end{enumerate}
  Then there exists a Hecke cusp eigenform \(F\in \calS_{{\det}^k\Sym^j}(\Sp_2)\) such that
  \[
    G=\Lift_E(F).
  \]
\end{prop}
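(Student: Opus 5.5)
We descend \(\Pi\) from \(\GL_4(\bbA_\bbQ)\) to \(\PGSp_2\), produce a level-one holomorphic Siegel form \(F\) of weight \({\det}^k\Sym^j\), and then use multiplicity one on \(\U_{2,2}\) to identify \(\pi_G\) with the representation \(\pi_U\) constructed in the proof of Proposition~\ref{prop:spin-lift}.

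\textbf{Step 1: \(\Pi\) is self-dual of symplectic type with trivial central character.} Since \(\Pi_G\) is conjugate self-dual and \(\BC_{E/\bbQ}(\Pi)=\Pi_G\), the base changes of \(\Pi^\vee\) and \(\Pi\) coincide. By the quadratic base-change fibres, \(\Pi^\vee\simeq\Pi\) or \(\Pi^\vee\simeq\Pi\otimes\chi_E\). The second case is impossible: at \(q\mid D_E\), \(\Pi_q\) is unramified while \(\Pi_q\otimes\chi_{E,q}\) is ramified. The same argument also shows that \(\Pi\) is uniquely determined by \(\Pi_G\).

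To get the type, we use the Asai factorisation of Proposition~\ref{prop:spin-lift} in reverse. Mok's sign condition \eqref{eq:mok-sign} with \(N=m=4\), \(d=1\) and \(\kappa=+\) gives \(\delta(\Pi_G)=+1\), hence \(\eta(\Pi_G)=-1\). So
\[
  L(s,\Pi,\wedge^2)\,L(s,\Pi\otimes\chi_E,\Sym^2)
\]
has a pole at \(s=1\). The factor \(L(s,\Pi\otimes\chi_E,\Sym^2)\) cannot have the pole, since that would force \(\Pi\simeq\Pi^\vee\otimes\chi_E\), which was just excluded. Hence \(L(s,\Pi,\wedge^2)\) has a pole and \(\Pi\) is symplectic.

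The central character \(\omega_\Pi\) is then quadratic and unramified everywhere, so it is trivial, because \(\bbQ\) has no unramified quadratic extensions. I also expect the sign \(\kappa\) to be dictated by the weight \(\tau_{k,j}\) at infinity; this should be checked from the archimedean parameter.

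\textbf{Step 2: descent to \(\PGSp_2\).} By Arthur's classification for \(\SO_5\simeq\PGSp_2\) (equivalently, the Gan--Takeda/Asgari--Shahidi descent), \(\Pi\) is the spin transfer of a generic cuspidal representation \(\pi^{\mathrm{gen}}\) of \(\PGSp_2(\bbA_\bbQ)\), with stable global \(L\)-packet \(\Pi_\psi\) for \(\psi=\Pi\boxtimes\nu(1)\). This parameter is simple generic, so its component group is trivial. Every member of the packet with multiplicity-formula character equal to \(\epsilon_\psi=1\) is therefore automorphic.

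At finite \(p\) we take the spherical member, which exists because \(\Pi_p\) is unramified (Lemma~\ref{lem:sphericalApacket} in its \(\Sp\) version). At infinity we need the archimedean parameter. Base change identifies \(\Pi_{G,\infty}\), whose infinitesimal character is read off from the holomorphic discrete series of lowest \(K\)-type \(\tau_{k,j}\). This gives \(\Pi_\infty\) regular with Hodge-type weights \(\pm\frac{2k+j-3}{2},\pm\frac{j+1}{2}\), which corresponds to Harish-Chandra parameter \((k+j-1,k-2)\) on \(\PGSp_2\). The archimedean packet is a discrete series \(L\)-packet of size two, consisting of the holomorphic and generic members.

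We then take \(\pi=\pi^{\hol}_\infty\otimes\bigotimes'_p\pi_p^{\sph}\). Its local characters are trivial at all finite places. At infinity the character is nontrivial, while the global component group is trivial, so the global packet character is still trivial and the multiplicity formula holds. \textbf{This archimedean step is the main obstacle}: one must check carefully that the holomorphic member is allowed, i.e.\ that the localisation map kills the nontrivial archimedean character.

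\textbf{Step 3: the Siegel form \(F\) and the identification \(G=\Lift_E(F)\).} Taking a lowest \(K\)-type vector in \(\pi_\infty^{\hol}\) and spherical vectors at all finite places gives a Hecke eigenform \(F\in\calS_{{\det}^k\Sym^j}(\Sp_2)\) with \(\Pi_F^{\Spin}=\Pi\), so that \(\Pi_{F,E}^{\Spin}=\Pi_G\).

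Now \(\pi_G\) and \(\pi_U=\pi_{\Lift_E(F)}\) both lie in \(\Pi_{\psi_G}\). Both are spherical at every finite \(p\), which is unique by Lemma~\ref{lem:sphericalApacket}, and both are holomorphic of weight \(\tau_{k,j}\) at infinity, which is unique in the discrete series packet. Hence \(\pi_G\simeq\pi_U\). By multiplicity one from Proposition~\ref{prop:mok-global-multiplicity}, together with one-dimensionality of the spherical and lowest-\(K\)-type vectors, \(G\) equals \(\Lift_E(F)\) up to a scalar.
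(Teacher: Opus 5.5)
Your proof is correct and has the same skeleton as the paper's. Self-duality of \(\Pi\) comes from the ramification of \(\chi_{E,q}\) at \(q\mid D_E\). Then the construction of Proposition~\ref{prop:spin-lift} is reversed through Arthur's multiplicity formula for \(\PGSp_2\). Finally \(G=\Lift_E(F)\) follows from multiplicity one and one-dimensionality of the \(\tau_{k,j}\)-isotypic \(K_{2,2,f}\)-fixed vectors.

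The one genuinely different step is the proof that \(\Pi\) is of symplectic type.

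\textbf{The paper's argument} is archimedean. The parameter of \(\Pi_\infty\) is \(I_{2k+j-3}\oplus I_{j+1}\) with both indices odd. So it preserves no non-degenerate symmetric form, and \(\Pi\) cannot be orthogonal.

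\textbf{Your argument} is global. Since \(\psi_G\in\Psi_2(\U_{2,2},\xi_{\chi_+})\), Mok's compatibility \eqref{eq:mok-sign} forces \(\eta(\Pi_G)=-1\). You then run the Asai factorization of Proposition~\ref{prop:spin-lift} backwards, reusing the exclusion of \(\Pi^\vee\simeq\Pi\otimes\chi_E\).

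Your route needs no archimedean input for the parity; it uses only the fixed embedding \(\xi_{\chi_+}\) and the unramifiedness at \(D_E\). The paper's route avoids any analysis of \(L\)-functions at \(s=1\). Two points about your step:
\begin{itemize}
\item The second factor must be read as \(L(s,\Pi,\Sym^2\otimes\chi_E)\). Literally, \(\Sym^2(\Pi\otimes\chi_E)=\Sym^2\Pi\), which would make the argument vacuous.
\item The implication from a pole of that factor to \(\Pi^\vee\simeq\Pi\otimes\chi_E\) comes from \(L(s,\Pi\times(\Pi\otimes\chi_E))=L(s,\Pi,\Sym^2\otimes\chi_E)\,L(s,\Pi,\wedge^2\otimes\chi_E)\) together with non-vanishing of the twisted exterior square at \(s=1\). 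This is the same input the paper uses in Proposition~\ref{prop:spin-lift}.
\end{itemize}
Your side remark that \(\kappa\) should be ``dictated by the weight'' is misplaced. \(\kappa=+\) is the fixed choice of \(L\)-embedding; what the weight dictates is the parity of \(\Pi\), and that is exactly the paper's argument.

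The step you flag as the main obstacle is not one. Since \(\Pi\) is cuspidal, the spin parameter has irreducible image in \(\GL_4(\bbC)\). By Schur's lemma its centralizer in \(\hat H=\Spin_5(\bbC)\) is the centre \(\{\pm1\}=Z(\hat H)\), so \(\calS_\psi\) is trivial. The multiplicity formula therefore imposes no condition, and the holomorphic member occurs with multiplicity one even though its archimedean character is non-trivial. You had in fact already written this down; the paper compresses it into the phrase ``the Arthur multiplicity formula yields''.
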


\begin{proof}
  We first show that \(\Pi\) is self-dual and of symplectic type.
  Since \(\Pi_G=\BC_{E/\bbQ}(\Pi)\),
  we have \(\Pi_G^c\simeq\Pi_G\).
  Together with the conjugate self-duality of \(\Pi_G\),
  this gives \(\Pi_G^\vee\simeq\Pi_G\).
  By the uniqueness of quadratic base change,
  \(\Pi^\vee\simeq\Pi\) or \(\Pi^\vee\simeq\Pi\otimes\chi_E\).
  The second possibility cannot occur.
  Indeed,
  if \(q\mid D_E\),
  then \(\chi_{E,q}\) is ramified.
  Since \(\Pi_q\) is unramified by assumption,
  \(\Pi_q\otimes\chi_{E,q}\) is ramified,
  and hence cannot be isomorphic to the unramified representation \(\Pi_q^\vee\).
  Hence \(\Pi\) is self-dual.
  Put
  \[
    I_m=\Ind_{W_{\bbC}}^{W_{\bbR}}(z\mapsto(z/|z|)^m).
  \]
  Since \(G\) has weight \(\tau_{k,j}\),
  the archimedean parameter of \(\Pi\) is \(I_{2k+j-3}\oplus I_{j+1}\).
  Both indices are odd,
  so this parameter has symplectic sign.
  Hence \(\Pi\) is of symplectic type.

  We may therefore reverse the construction in the proof of Proposition~\ref{prop:spin-lift}.
  The representation \(\Pi\) determines a simple global Arthur parameter for \(\PGSp_2\),
  and the Arthur multiplicity formula yields a Hecke cusp eigenform \(F\in\calS_{{\det}^k\Sym^j}(\Sp_2)\).
  By construction,
  \(G\) and \(\Lift_E(F)\) generate the same automorphic representation.
  Since this representation occurs with multiplicity one and its \(\tau_{k,j}\)-isotypic \(K_{2,2,f}\)-fixed subspace is one-dimensional,
  the two forms differ by a non-zero scalar.
  After rescaling \(\Lift_E(F)\),
  we obtain \(G=\Lift_E(F)\).
\end{proof}

To simplify notation,
for a Hermitian automorphic form \(h\in \calS_{\tau_{k,j}}(\U_{2,2})\),
we put
\begin{align*}
  L_p^\natural(X,h,\St) & = L_p(p^{(2k+j-3)/2}X,h,\St), \\
  L^\natural(s,h,\St)   & = L(s-(2k+j-3)/2,h,\St).
\end{align*}
With this normalization,
Proposition~\ref{prop:spin-lift} gives
\[
  L_\ell^\natural(X,\Lift_E(F),\St) =L_\ell(X,F,\Spin)L_\ell(\chi_E(\ell)X,F,\Spin).
\]

\subsection{Hermitian congruences}
\label{subsec:herm-cong}

We regard each elliptic cusp form \(f\) as the Hermitian automorphic form \(\iota_E(f)\).
The ideal representatives in this construction may be chosen prime to \(p_\frakp\),
so their \(p_\frakp\)-components are integral and satisfy the hypothesis in \cite[Theorem~6.2(4e)]{Takeda2026Congruences}.

For integers \(\nu>\mu\geq2\),
put
\[
  \sigma_{\mu,\nu} = {\det}^{\mu}\Sym^{\nu-\mu}\boxtimes {\det}^{\mu}\Sym^{\nu-\mu}.
\]
Put \(V_{\mu,\nu}=V_{\sigma_{\mu,\nu}}\).
For \(n=1,2\) and
\[
  g_\infty
  =
  \begin{pmatrix}
    A & B \\
    C & D
  \end{pmatrix}
  \in\U(n,n),
\]
put
\[
  M_n(g_\infty)
  =
  \left(
  C(\sqrt{-1}I_n)+D,
  \overline C(\sqrt{-1}I_n)+\overline D
  \right).
\]
Let \(P_{2,1}\subset\U_{2,2}\) be the standard Klingen parabolic subgroup,
whose Levi factor is isomorphic to \(\Res_{E/\bbQ}\GL_1\times\U_{1,1}\).
Write \(N_{2,1}\) for its unipotent radical,
and use the Levi embeddings
\begin{align*}
  t_{2,1}(a)
   & =
  \begin{pmatrix}
    a & 0 & 0                & 0 \\
    0 & 1 & 0                & 0 \\
    0 & 0 & \overline a^{-1} & 0 \\
    0 & 0 & 0                & 1
  \end{pmatrix}, \\
  s_{2,1}\!\left(
  \begin{pmatrix}
      \alpha & \beta  \\
      \gamma & \delta
    \end{pmatrix}
  \right)
   & =
  \begin{pmatrix}
    1 & 0      & 0 & 0      \\
    0 & \alpha & 0 & \beta  \\
    0 & 0      & 1 & 0      \\
    0 & \gamma & 0 & \delta
  \end{pmatrix}.
\end{align*}
As in \cite[Definition~4.7]{Takeda2026Congruences},
we regard \(V_{\sigma_\nu}\) as a subspace of \(V_{\mu,\nu}\) through the standard inclusion associated with the lower-right Levi embedding.
For \(f\in\calS_{\sigma_\nu}(\U_{1,1})\),
choose an Iwasawa decomposition
\[
  g
  =
  t_{2,1}(a)u\,s_{2,1}(h)k,
\]
where \(a\in\bbA_E^\times\),
\(u\in N_{2,1}(\bbA_\bbQ)\),
\(h\in\U_{1,1}(\bbA_\bbQ)\),
and \(k\in K_{2,2,f}K_{2,2,\infty}\).
Define the normalized Klingen--Eisenstein section \(\varepsilon(f)\in\calA_{\sigma_{\mu,\nu}}(P_{2,1})\) by
\[
  \varepsilon(f)(g)
  =
  |a^*a|_{\bbA_f}^{\mu}
  \sigma_{\mu,\nu}\!\left(M_2(g_\infty)\right)^{-1}
  \sigma_\nu\!\left(M_1(h_\infty)\right)f(h).
\]
This is the specialization to \(s=\mu\) of the section in \cite[Definition~4.7]{Takeda2026Congruences}.
The associated Hermitian Klingen--Eisenstein series is
\[
  [f]_{2\mu}^{2}(g)
  =
  \sum_{\gamma\in P_{2,1}(\bbQ)\backslash\U_{2,2}(\bbQ)}
  \varepsilon(f)(\gamma g)
  \in\calA_{\sigma_{\mu,\nu}}(\U_{2,2}).
\]
Let \(\calG_0^{(2)}\subset\U_{2,2}(\bbA_f)\) be the set of ideal class representatives fixed in \cite[Section~2.2]{Takeda2026Congruences}.
For \(\gamma\in\calG_0^{(2)}\) and \(S\in\Her_2(E)_{\geq0}\),
write \(A_{[f]_{2\mu}^{2}}(\gamma,S)\) for the corresponding adelic Fourier coefficient.

Put \(V_{\mu,\nu}=V_{\sigma_{\mu,\nu}}\)
and fix the natural \(\bbZ\)-lattice \(V_{\mu,\nu,\bbZ}\) in \(V_{\mu,\nu}\),
stable under the action of \(U_2(\bbZ)\times U_2(\bbZ)\),
with a \(\bbZ\)-basis \(e_1,\ldots,e_t\).
For
\[
  a=\sum_{i=1}^t a_i e_i\in V_{\mu,\nu,\bbZ}\otimes_\bbZ K,
\]
we put
\[
  v_\frakp(a)=\min_{1\le i\le t}v_\frakp(a_i).
\]
This value is independent of the choice of the \(\bbZ\)-basis of \(V_{\mu,\nu,\bbZ}\).

For Hecke eigenforms \(H_1,H_2\) of the same weight on \(\U_{2,2}\),
we write
\[
  H_1\equiv_{ev}H_2\pmod{\frakP}
\]
if their eigenvalues are congruent modulo \(\frakP\) for all operators in the
Hecke algebra associated with \(\bigotimes'_{p<\infty}\calH\left(\GU_{2,2}(\bbQ_p)\cap \M_4(\bbZ_p),\GU_{2,2}(\bbZ_p)\right)\).
(See \cite[Sections~3 and~6]{Takeda2026Congruences} for details.)

\begin{prop}[{\cite[Corollary~6.5]{Takeda2026Congruences}}]\label{prop:main_n2_r1}
  Let \(\mu\) and \(\nu\) be integers with \(\nu>\mu\ge 2\),
  and let \(f\) be a primitive elliptic cusp form of weight \(2\nu\).
  Assume that there exists a prime ideal \(\frakp\) of \(\bbQ(f)\) satisfying the following conditions.
  \begin{enumerate}
    \item The form \(f\) is not congruent modulo \(\frakp\) to any other Hecke cusp eigenform of weight \(2\nu\).

    \item \(\zeta(3-2\mu) L(4-2\mu,\chi_E)\) is a \(\frakp\)-unit.

    \item There exist \(\gamma_0\in\calG_0^{(2)}\) and \(S_0\in \Her_2(E)_{>0}\) such that
          \[
            v_\frakp\!\left(L^\alg((\mu-1)+1/2,f,\St)\right) =-v_\frakp\!\left(A_{[f]^2_{2\mu}}(\gamma_0,S_0)\right)>0.
          \]
    \item The rational prime \(p_\frakp\) underlying \(\frakp\) satisfies \(p_\frakp>2\mu+1\) and \(p_\frakp\nmid D_E\).
  \end{enumerate}
  Then there exists a Hecke cusp eigenform \(G\in \calS_{\sigma_{\mu,\nu}}(\U_{2,2})\) such that
  \[
    G\equiv_{ev}[f]_{2\mu}^{2} \pmod{\frakP}
  \]
  for some prime ideal \(\frakP\) of \(K=\bbQ(f,G)\) lying above \(\frakp\).
\end{prop}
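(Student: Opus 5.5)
The plan is the standard Eisenstein-congruence argument: realize \([f]^2_{2\mu}\) inside a \(\frakp\)-integral pullback, show that its Fourier coefficient at \((\gamma_0,S_0)\) forces a non-trivial cuspidal contribution modulo \(\frakP\), and then pass from Fourier coefficients to Hecke eigenvalues.

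\textbf{Step 1 (pullback formula).} Start from the holomorphic Siegel Eisenstein series \(\calE\) on \(\U_{4,4}\) of scalar weight \(2\mu\). Normalize it by \(\zeta(3-2\mu)L(4-2\mu,\chi_E)\) together with the other Dirichlet factors. Its Fourier coefficients are products of local Siegel series, so after this normalization they lie in \(\calO_{(\frakp)}\).

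\textbf{Step 2 (restriction).} Apply a \(\frakp\)-integral holomorphic differential operator \(\frakD\) of Ibukiyama type, projecting onto \(V_{\mu,\nu}\otimes V_{\mu,\nu}\), and restrict to \(\calH^{(\typeIII)}_{2,2}\times\calH^{(\typeIII)}_{2,2}\). The hypothesis \(p_\frakp>2\mu+1\) is used here, to keep the coefficients of \(\frakD\) integral. The pullback formula should express \((\frakD\calE)|_{\calH\times\calH}\) as
\[
  \sum_{G}c\,\frac{L^{\alg}(\mu-\tfrac32+\cdots,G,\St)}{(G,G)}\,G\otimes G
  +\sum_{h}c'\,\frac{L^{\alg}((\mu-1)+1/2,h,\St)}{h_E}\,[h]^2_{2\mu}\otimes[h]^2_{2\mu}
  +(\text{Siegel Eisenstein terms}).
\]
In this expression \(G\) runs over a Hecke eigenbasis of \(\calS_{\sigma_{\mu,\nu}}(\U_{2,2})\), and \(h\) runs over the primitive forms of weight \(2\nu\) regarded on \(\U_{1,1}\) via \(\iota_E\). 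The normalizations of Proposition~\ref{prop:std-alg} and Remark~\ref{rem:degree-one-alg} are used for the \(L\)-values, and \(p\nmid D_E\) controls the \(|D_E|\)-factors.

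\textbf{Step 3 (isolating \(f\)).} Apply a Hecke operator \(T\) in the second variable with \(T\equiv 1\) on \(f\)'s eigensystem and \(T\equiv 0\) modulo \(\frakp\) on all other eigensystems that are not congruent to \([f]^2_{2\mu}\); hypothesis (1) makes this possible. Then take the \((\gamma_0,S_0)\)-th Fourier coefficient in the second variable. The result is a \(\frakp\)-integral form in the first variable, equal to
\[
  L^{\alg}\,A_{[f]^2_{2\mu}}(\gamma_0,S_0)\,[f]^2_{2\mu}+\sum_G d_G\,G .
\]
By hypothesis (3), the scalar in front of \([f]^2_{2\mu}\) is a \(\frakp\)-unit.

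\textbf{Step 4 (forcing a cuspidal term).} Evaluate the \((\gamma_0,S_0)\)-th Fourier coefficient again, now in the first variable. The \(f\)-term contributes valuation \(-v_\frakp(L^{\alg})<0\), while the left-hand side is integral. Hence some \(d_G\,a(S_0;G)\) must be non-integral, so the cuspidal part is non-zero modulo \(\frakp\), and the cusp forms nearly cancel the Eisenstein term: \([f]^2_{2\mu}\equiv-\sum d_GG\) modulo \(\frakP\), with \(\frakP\) a prime above \(\frakp\) in \(K=\bbQ(f,G)\).

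\textbf{Step 5 (eigenvalue congruence).} Apply a Hecke operator \(T-\lambda_{[f]}(T)\) to the congruence of Step 4. The standard linear-algebra lemma (Katsurada, Brown) then yields a single eigenform \(G\) whose eigenvalues are congruent to those of \([f]^2_{2\mu}\) modulo \(\frakP\), for all operators in the Hecke algebra of \(\GU_{2,2}\).

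\textbf{Main obstacle.} I expect the hard part to be Step 2: the exact pullback formula in the vector-valued Klingen setting. One must identify the \([h]^2_{2\mu}\) contribution with precisely the standard \(L\)-value \(L((\mu-1)+1/2,h,\St)\), together with a unit Petersson factor, and verify that the differential operator and the Siegel Eisenstein normalization introduce no extra \(\frakp\)-denominators. This is where hypotheses (2) and (4) enter. I would first try to adapt Kozima's and Shimura's pullback computations to \(\U_{n,n}\), following the section \(\varepsilon(f)\) defined above at \(s=\mu\).
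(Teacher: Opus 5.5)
Your outline is the standard pullback-formula argument, and it follows essentially the same route as the source of this statement. The paper gives no proof of its own but imports the result from \cite[Corollary~6.5]{Takeda2026Congruences}; as the paper's remarks on the pullback formula, on Hermitian Eisenstein coefficients and on the differential operators of \cite{Takeda2025Differential} indicate, the congruence there comes from isolating the \([f]^2_{2\mu}\otimes[f]^2_{2\mu}\) term, whose coefficient is governed by \(L^{\alg}((\mu-1)+1/2,f,\St)\), and then comparing \(\frakP\)-adic valuations of Fourier coefficients at \((\gamma_0,S_0)\). The one step to tighten is Step~3: an operator that is merely \(\equiv 0\pmod{\frakP}\) on the other eigensystems does not remove terms whose coefficients carry \(\frakP\)-denominators, so use instead the idempotent of the \(\frakP\)-adically completed Hecke algebra attached to the eigensystem of \([f]^2_{2\mu}\), which acts by exactly \(1\) or \(0\) on eigenforms (hypothesis~(1) then ensures that no \([h]^2_{2\mu}\) with \(h\ne f\) survives, and every surviving cusp form is already congruent to \([f]^2_{2\mu}\)).
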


\begin{rem}
  \label{rem:rankin-selberg-description}
  The pullback formula,
  together with the formula of B\"ocherer--Satoh--Yamazaki \cite{BoechererSatohYamazaki1992} for Fourier coefficients,
  shows that,
  after unfolding,
  the coefficient \(A_{[f]^2_{2\mu}}(\gamma_0,S_0)\) can be expressed in terms of Rankin--Selberg-type Dirichlet series involving the Fourier coefficients of \(f\),
  local Hermitian Siegel series,
  and explicit archimedean factors.
  Thus condition (3) can in principle be reformulated as a valuation condition for these Rankin--Selberg-type quantities.
  We retain the Fourier-coefficient formulation,
  since it is better suited to the explicit computations carried out below.
\end{rem}

Let \(k\geq 4\) and \(j\geq 2\) be integers with \(j\) even.
Put \(\mu=j/2+2\),
\(\nu=k+j/2-1\).
Then Proposition~\ref{prop:main_n2_r1} may apply to a primitive elliptic cusp form \(f\) of weight \(2k+j-2\).
For this choice of \(\mu\) and \(\nu\),
we have \(\sigma_{\mu,\nu}=\tau_{k,j}\) and \([f]_{2\mu}^{2}=[f]_{j+4}^{2}\).

If the cusp form \(G\) obtained in Proposition~\ref{prop:main_n2_r1} is the lift \(\Lift_E(F)\) of a Siegel cusp form constructed in the previous section,
then the congruence can be translated into the Harder-type congruence.

\begin{thm}
  \label{thm:harder}
  Let \(f\in S_{2k+j-2}(\SL_2(\bbZ))\) be a primitive elliptic cusp form,
  regarded as the Hermitian automorphic form \(\iota_E(f)\),
  and let \(F\in\calS_{{\det}^{k}\Sym^{j}}(\Sp_2)\) be a Hecke cusp eigenform.
  Assume that \(p_\frakP>2k+j-2\) and \(p_\frakP\nmid D_E\).
  Assume also that
  \[
    \Lift_E(F)\equiv_{ev}[f]^2_{j+4}\pmod{\frakP}.
  \]
  Then we have
  \[
    L_\ell(X,F,\Spin) \equiv L_\ell(X,f)(1-\ell^{k+j-1}X)(1-\ell^{k-2}X) \pmod{\frakP}
  \]
  for every prime \(\ell\).
\end{thm}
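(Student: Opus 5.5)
The plan is to compare the local standard \(L\)-polynomials of \(\Lift_E(F)\) and of \([f]^2_{j+4}\) at every prime \(\ell\), and then extract the spinor polynomial of \(F\) from the product.

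First I would make the Hecke-eigenvalue congruence concrete. The coefficients of \(L_\ell^\natural(X,h,\St)\) are, up to the normalizing powers of \(\ell\), elements of the image of the Satake transform of the local Hecke algebra \(\calH(\GU_{2,2}(\bbQ_\ell)\cap\M_4(\bbZ_\ell),\GU_{2,2}(\bbZ_\ell))\). Here I use \(p_\frakP>2k+j-2\) so that the denominators and half-integral powers of \(\ell\) entering the normalization are \(\frakP\)-units. Hence \(\Lift_E(F)\equiv_{ev}[f]^2_{j+4}\pmod{\frakP}\) gives
\[
L_\ell^\natural(X,\Lift_E(F),\St)\equiv L_\ell^\natural(X,[f]^2_{j+4},\St)\pmod{\frakP}
\]
for every prime \(\ell\), including ramified and inert \(\ell\). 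At inert primes this is a polynomial in \(X^2\).

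Next I need the explicit standard polynomial of the Klingen--Eisenstein series. Its Satake parameters are those of \(\iota_E(f)\) together with the pair coming from the character \(|\cdot|^{\mu}\)-type section on \(\Res_{E/\bbQ}\GL_1\) at \(s=\mu=j/2+2\). I would recall this from \cite{Takeda2026Congruences} or compute it from the parabolic induction. By Remark~\ref{rem:degree-one-alg}, base change gives the \(f\)-part \(L_\ell(X,f)L_\ell(\chi_E(\ell)X,f)\). The Eisenstein part contributes \((1-\ell^{a}X)(1-\ell^{b}X)\) times its \(\chi_E\)-twist. After the \(\natural\)-normalization, the exponents should be \(\{a,b\}=\{k+j-1,k-2\}\), which are symmetric about \((2k+j-3)/2\) with offset \((j+1)/2\). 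This gives
\[
L_\ell^\natural(X,[f]^2_{j+4},\St)=P_\ell(X)P_\ell(\chi_E(\ell)X),\qquad P_\ell(X)=L_\ell(X,f)(1-\ell^{k+j-1}X)(1-\ell^{k-2}X).
\]
By Proposition~\ref{prop:spin-lift}, the lift side is \(Q_\ell(X)Q_\ell(\chi_E(\ell)X)\) with \(Q_\ell=L_\ell(X,F,\Spin)\). So \(Q_\ell(X)Q_\ell(\chi_E(\ell)X)\equiv P_\ell(X)P_\ell(\chi_E(\ell)X)\) modulo \(\frakP\).

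The main obstacle is the descent from this product congruence to \(Q_\ell\equiv P_\ell\). I would split into three cases.

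\textbf{Ramified \(\ell\).} Here \(\chi_E(\ell)=0\), so the product is just \(Q_\ell\equiv P_\ell\) and we are done.

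\textbf{Split \(\ell\).} Here \(\chi_E(\ell)=1\), so \(Q_\ell^2\equiv P_\ell^2\) in \(\kappa_\frakP[X]\). Both polynomials have constant term \(1\) and the ring \(\kappa_\frakP[X]\) is a UFD. Hence \(Q_\ell\equiv\pm P_\ell\), and comparing constant terms forces the sign \(+\).

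\textbf{Inert \(\ell\).} Here we only know \(Q_\ell(X)Q_\ell(-X)\equiv P_\ell(X)P_\ell(-X)\), which determines the multiset of reduced roots only up to sign. I would not try to resolve this from \(E\) alone. Instead I would apply Chebotarev/Dirichlet to pass to another auxiliary field. Alternatively, and more cleanly, I would use the Galois-side argument. The mod \(\frakP\) Galois representations \(\bar\rho_F^{\Spin}|_{\Gal_E}\) and \(\bar\rho_f\oplus\varepsilon^{k+j-1}\oplus\varepsilon^{k-2}\) restricted to \(\Gal_E\) have equal semisimplifications, by the split-prime case and Chebotarev density over \(E\). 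Then \(\bar\rho_F^{\Spin,ss}\) equals either the desired representation or its twist by \(\chi_E\). The twist is excluded by looking at a prime \(q\mid D_E\). The representation \(\bar\rho_F^{\Spin}\) is unramified there, since \(F\) has level one. By contrast, \((\bar\rho_f\oplus\varepsilon^{k+j-1}\oplus\varepsilon^{k-2})\otimes\chi_E\) has nontrivial inertia action at \(q\), because \(\chi_E\) has order \(2\) prime to \(p\) and stays ramified mod \(\frakP\). Note this step uses Galois representations for Siegel forms (Weissauer), which I would simply invoke. Once the semisimplifications agree, comparing characteristic polynomials of Frobenius at all \(\ell\ne p\), together with the direct computation at \(\ell=p\) from the eigenvalue congruence (or a crystalline argument), yields the theorem.
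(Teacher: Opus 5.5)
For \(\ell\ne p\), your argument is essentially the paper's. You obtain the product congruence for \(L^\natural_\ell\), and from it the identity \(\overline\rho_F^{ss}+\overline\chi_E\overline\rho_F^{ss}=\calB_f+\overline\chi_E\calB_f\) in the Grothendieck ring; this is exactly what your equality of restrictions to \(\Gal_E\) gives after inducing back to \(\Gal_\bbQ\). You then exclude the \(\overline\chi_E\)-twisted constituents by ramification at some \(q\mid D_E\). Handling ramified and split \(\ell\) directly and saving the Galois argument for inert \(\ell\) is a harmless reorganization; the paper just runs the Galois argument for all \(\ell\ne p\).

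One step needs tightening. Equal restrictions to \(\Gal_E\) do not force \(\overline\rho_F^{ss}\) to be either \(\calB_f\) or \(\calB_f\otimes\overline\chi_E\), because a priori each constituent can be twisted independently (e.g.\ \(\overline\rho_f\oplus\overline\omega^{-k+2}\overline\chi_E\oplus\overline\omega^{-k-j+1}\)). You need the constituent-wise version of your observation, as in the paper. Every constituent of \(\calB_f\otimes\overline\chi_E\) is ramified at \(q\), while every constituent of \(\overline\rho_F^{ss}\) is unramified there. Counting multiplicities then gives \(\overline\rho_F^{ss}=\calB_f\).

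The genuine gap is the prime \(\ell=p\), which the theorem includes. Your first suggestion, reading off the \(L^\natural_p\) congruence from the eigenvalue congruence, is not justified. The hypothesis \(p>2k+j-2\) has nothing to do with powers of \(\ell\) being \(\frakP\)-units: they are units automatically for \(\ell\ne p\) and never for \(\ell=p\). The paper accordingly uses the \(L^\natural_\ell\) congruence only for \(\ell\ne p\). Even granting it, if \(p\) is inert in \(E\) you only get \(Q_p(X)Q_p(-X)\equiv P_p(X)P_p(-X)\), which does not determine \(Q_p\) modulo \(\frakP\). Your inert-prime device is also unavailable at \(p\), since \(\rho_F\) is ramified there and there is no Frobenius to feed into Chebotarev.

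So the parenthetical crystalline argument is not an alternative; it is the proof at \(p\), and it is where \(p>2k+j-2\) is actually used.
\begin{enumerate}
\item By Barnet-Lamb--Gee--Geraghty--Taylor, \(\rho_F|_{\Gal_{\bbQ_p}}\) is crystalline with \(L_p(X,F,\Spin)=\det(1-X\varphi_p\mid D_{\cris})\).
\item Its Hodge--Tate weights \(0,-(k-2),-(k+j-1),-(2k+j-3)\) lie in \([-(p-2),0]\), so Fontaine--Laffaille gives \(L_p(X,F,\Spin)\equiv\det(1-X\Frob_p\mid(\overline\rho_F^{ss})^{I_p})\pmod{\frakP}\).
\item Use \(\overline\rho_F^{ss}=\calB_f\) from the \(\ell\ne p\) step. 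The characters \(\overline\omega^{-k+2}\) and \(\overline\omega^{-k-j+1}\) are non-trivial on \(I_p\) because \(0<k-2<k+j-1<p-1\), so the inertia invariants reduce to \((\overline\rho_f^{ss})^{I_p}\).
\item By Fontaine--Laffaille for \(\rho_f\), the Frobenius polynomial on \((\overline\rho_f^{ss})^{I_p}\) is \(L_p(X,f)\) modulo \(\frakP\).
\item Finally, \((1-p^{k+j-1}X)(1-p^{k-2}X)\equiv1\pmod{\frakP}\), which gives the congruence at \(\ell=p\).
\end{enumerate}
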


To prove the theorem,
we recall the Galois representations attached to \(f\) and \(F\).
Put \(p=p_\frakP\).
We use geometric Frobenius elements \(\Frob_\ell\) throughout.

By the work of Deligne \cite{Deligne1971Formes},
there is a \(2\)-dimensional continuous semisimple Galois representation
\[
  \rho_f: \Gal_\bbQ \rightarrow \GL_2(K_\frakP)
\]
associated with the elliptic modular form \(f\),
such that,
for every prime \(\ell\ne p\),
the representation \(\rho_f\) is unramified at \(\ell\) and
\[
  L_\ell(X,f) = \det \left( 1-X\rho_f(\Frob_\ell) \right).
\]
Similarly,
by the work of Weissauer \cite{Weissauer2005Four},
the cuspidal automorphic representation of \(\GSp_4\) associated with \(F\)
admits a \(4\)-dimensional continuous semisimple Galois representation
\[
  \rho_F: \Gal_\bbQ \rightarrow \GL_4(K_\frakP)
\]
such that,
for every prime \(\ell\ne p\),
the representation \(\rho_F\) is unramified at \(\ell\) and
\[
  L_\ell(X,F,\Spin) = \det \left( 1-X\rho_F(\Frob_\ell) \right).
\]

At the prime \(p\),
both representations are crystalline.
For \(\rho_f\),
this follows from the usual \(p\)-adic local-global compatibility for elliptic modular forms of level prime to \(p\).
For \(\rho_F\),
it follows from the \(l=p\) local-global compatibility theorem of Barnet-Lamb--Gee--Geraghty--Taylor \cite[Corollary~1.2]{BLGGT2014LocalGlobal},
applied to \(\Pi_F^{\Spin}\),
Their Hodge--Tate weights are \(\HT(\rho_f)=\{-(2k+j-3),0\}\) and \(\HT(\rho_F) = \{ -(2k+j-3), -(k+j-1), -(k-2), 0 \}\).
Moreover,
their local Euler polynomials at \(p\) are given by crystalline Frobenius:
\[
  L_p(X,f) = \det \left( 1-X\varphi_p \mid D_{\cris} \left( \rho_f|_{\Gal_{\bbQ_p}} \right) \right)
\]
and
\[
  L_p(X,F,\Spin) = \det \left( 1-X\varphi_p \mid D_{\cris} \left( \rho_F|_{\Gal_{\bbQ_p}} \right) \right),
\]
where \(\varphi_p\) denotes crystalline Frobenius.

We denote by \(\overline\rho_f^{ss}\) and \(\overline\rho_F^{ss}\) the semisimplifications of the residual representations of \(\rho_f\) and \(\rho_F\),
respectively.

We denote by \(\frakA_\frakP(\Gal_\bbQ)\) the Grothendieck ring of finite-dimensional continuous representations of \(\Gal_\bbQ\) over \(\kappa_\frakP\).
By abuse of notation,
we use the same symbol for a residual representation and its class in \(\frakA_\frakP(\Gal_\bbQ)\).
Addition is induced by direct sum,
and multiplication is induced by tensor product.
Throughout this section,
all decompositions and equalities of residual Galois representations are understood in \(\frakA_\frakP(\Gal_\bbQ)\),
unless otherwise stated.

Put
\[
  \calB_f = \overline\rho_f^{ss} + \overline\omega^{-k+2} + \overline\omega^{-k-j+1} \in \frakA_\frakP(\Gal_\bbQ).
\]

The following Fontaine--Laffaille lemma will be applied to both \(\rho_f|_{\Gal_{\bbQ_p}}\) and \(\rho_F|_{\Gal_{\bbQ_p}}\).

\begin{lem}
  \label{lem:FL-euler-reduction}
  Let \(V\) be a crystalline \(K_\frakP\)-representation of \(\Gal_{\bbQ_p}\),
  and let \(T\subset V\) be a stable \(\calO_\frakP\)-lattice.
  Assume that all Hodge--Tate weights of \(V\) lie in \([-(p-2),0]\),
  and that
  \[
    P_V(X)=\det(1-X\varphi_p\mid D_{\cris}(V))
  \]
  belongs to \(\calO_\frakP[X]\).
  Then
  \[
    P_V(X)\equiv \det\left(1-X\Frob_p\mid (T/\frakP T)^{ss,I_p}\right) \pmod{\frakP}.
  \]
\end{lem}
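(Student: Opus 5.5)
The plan is to reduce the statement to integral $p$-adic Hodge theory via the Fontaine--Laffaille functor. Since $V$ is crystalline with Hodge--Tate weights in $[-(p-2),0]$, the theorem of Fontaine--Laffaille (in the version with $\calO_\frakP$-coefficients) gives an exact, fully faithful functor from strongly divisible $\calO_\frakP$-lattices (Fontaine--Laffaille modules) $M$ with filtration jumps in $[0,p-2]$ to $\Gal_{\bbQ_p}$-stable lattices in crystalline representations. Every stable lattice $T\subset V$ arises this way from a strongly divisible lattice $M\subset D_{\cris}(V)$, and $M\otimes K_\frakP=D_{\cris}(V)$ compatibly with $\varphi$. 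I would first dualize if necessary, so that the weights are nonnegative in the covariant convention. Then $P_V(X)=\det(1-X\varphi_p\mid M)$ has coefficients in $\calO_\frakP$, which is consistent with the integrality hypothesis, and its reduction is $\det(1-X\varphi\mid M/\frakP M)$, where $\varphi$ denotes the linear Frobenius ($\bbQ_p$ is unramified and the coefficients are linear).

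The next step is to pass to the residual Fontaine--Laffaille module $\overline M=M/\frakP M$, which corresponds to $T/\frakP T$ by exactness. I would take a Jordan--H\"older filtration of $\overline M$ in the abelian category of residual Fontaine--Laffaille modules. Exactness then gives a filtration of $T/\frakP T$ whose graded pieces correspond to the simple subquotients. Both sides of the desired congruence are multiplicative in short exact sequences. For the left side this holds because the characteristic polynomial of $\varphi$ is multiplicative on the underlying $\kappa_\frakP$-vector spaces. For the right side it holds because $(\cdot)^{ss,I_p}$ only depends on semisimplification, once we interpret $(T/\frakP T)^{ss,I_p}$ as the inertia invariants of the semisimplification. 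This reduces the proof to simple residual Fontaine--Laffaille modules.

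For a simple module $\overline M_0$, the corresponding representation $W$ is irreducible, and it is unramified if and only if $\overline M_0$ is of rank one with a single filtration jump at $0$. I would prove this by the known classification of simple Fontaine--Laffaille modules over $\bar\kappa$ (Fontaine--Laffaille, \S 6): their restrictions to inertia are sums of fundamental characters of niveau $d$ with exponents given by the filtration jumps in $[0,p-2]$. Such a character is trivial only if all jumps are $0$, and here the bound $p-2$ is used to exclude jump $p-1$ mimicking $0$. An irreducible $W$ has either $W^{I_p}=0$ or $W$ unramified, by normality of $I_p$. In the unramified case with all jumps $0$, the Fontaine--Laffaille module is just an \'etale $\varphi$-module, and $W$ is given by its $\varphi$-fixed points over $\bar\kappa$. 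Hence $\det(1-X\varphi\mid\overline M_0)=\det(1-X\Frob_p\mid W)$, up to checking that geometric Frobenius matches $\varphi$ rather than $\varphi^{-1}$ in the chosen normalization. In the ramified case I need $\det(1-X\varphi\mid \overline M_0)=1$, i.e.\ $\varphi$ is nilpotent on $\overline M_0$ as a linear map.

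I expect this last point to be the main obstacle. The divided Frobenii $\varphi^i=\varphi/p^i$ on $\mathrm{Fil}^i$ are what define the module, and the ``linear'' $\varphi=\varphi^0$ reduces to zero on $\mathrm{Fil}^1$ modulo $\frakP$. Since for a simple module with some nonzero jump $\varphi^0$ maps $\overline M_0$ into the image of lower filtration steps, I would show that iterating $\varphi^0$ eventually lands in $\varphi^0(\mathrm{Fil}^1)=0$, using simplicity to rule out a nonzero $\varphi^0$-stable part with all jumps $0$, which would be a subobject. Finally, the Frobenius and sign conventions (geometric Frobenius, negative Hodge--Tate weights) need to be checked for compatibility.
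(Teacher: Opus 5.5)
Your proposal is correct and follows the paper's approach. The paper's proof only cites Fontaine--Laffaille's Th\'eor\`eme~5.3 (and Ribet--Stein); your argument unpacks that citation. It passes to a strongly divisible lattice, uses Jordan--H\"older factors of the residual Fontaine--Laffaille module, and shows that $\bar\varphi^0$ is nilpotent on simple objects with a nonzero jump. That last step is cleanest via the Fitting decomposition of $\bar\varphi^0$: its bijective part meets $\Fil^1$ trivially, so it is a sub-object with all jumps $0$.

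One small correction: with the paper's covariant $D_{\cris}$ and $\HT(\omega)=\{1\}$, the filtration jumps of $D_{\cris}(V)$ already lie in $[0,p-2]$, so no dualization is needed. You should not dualize, since that would replace $P_V$ by $\det(1-X\varphi^{-1}\mid D_{\cris}(V))$.
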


\begin{proof}
  This is a standard consequence of \cite[Th\'eor\`eme~5.3]{FontaineLaffaille1982};
  see also \cite[Chapter~2, \S2.1.1--\S2.1.2]{RibetStein1999Serre}.
\end{proof}

We denote by \(\overline{\omega}: \Gal_{\bbQ}\rightarrow \kappa_\frakP^\times\) the mod \(\frakP\) cyclotomic character,
and by \(\widetilde{\omega}: \Gal_{\bbQ}\rightarrow\calO_\frakP^\times\) its Teichm\"uller lift,
viewed also as the corresponding Dirichlet character.
We use geometric Frobenius elements.
Thus,
for any rational prime \(\ell\ne p_\frakP\),
we have \(\overline{\omega}(\Frob_\ell)=\ell^{-1}\in\kappa_\frakP^\times\).

We define an element \(\calB_f\in\frakA_\frakP(\Gal_{\bbQ})\) by
\[
  \calB_f = \overline{\rho}_f^{ss} + \overline{\omega}^{-k+2} + \overline{\omega}^{-k-j+1}.
\]

\begin{proof}[Proof of Theorem~\ref{thm:harder}]
  Put \(A_\ell(X)=(1-\ell^{k+j-1}X)(1-\ell^{k-2}X)\).
  The assumed congruence of Hecke eigenvalues gives
  \[
    L_\ell^\natural(X,\Lift_E(F),\St) \equiv L_\ell^\natural(X,[f]^2_{j+4},\St) \pmod{\frakP}
  \]
  for every prime \(\ell\ne p_\frakP\).
  By Proposition~\ref{prop:spin-lift},
  we have
  \[
    L_\ell^\natural(X,\Lift_E(F),\St) = L_\ell(X,F,\Spin)L_\ell(\chi_E(\ell)X,F,\Spin).
  \]
  On the other hand,
  the local standard \(L\)-polynomial of the Hermitian Klingen--Eisenstein lift is
  \[
    L_\ell^\natural(X,[f]^2_{j+4},\St) = L_\ell(X,f)L_\ell(\chi_E(\ell)X,f) A_\ell(X)A_\ell(\chi_E(\ell)X).
  \]
  Hence
  \[
    L_\ell(X,F,\Spin)L_\ell(\chi_E(\ell)X,F,\Spin) \equiv L_\ell(X,f)L_\ell(\chi_E(\ell)X,f)
    A_\ell(X)A_\ell(\chi_E(\ell)X) \pmod{\frakP}
  \]
  for every prime \(\ell\ne p_\frakP\).

  In the Grothendieck group \(\frakA_\frakP(\Gal_{\bbQ})\),
  we have
  \[
    (1+\overline{\chi}_E)\overline{\rho}_F^{ss} = (1+\overline{\chi}_E)\calB_f.
  \]
  Therefore every Jordan--H\"older constituent of
  \(\overline{\rho}_F^{ss}\) is a Jordan--H\"older constituent of either \(\calB_f\) or \(\overline{\chi}_E\calB_f\).

  Let \(q_0\mid D_E\) be a rational prime with \(q_0\ne p_\frakP\).
  Since \(F\) has full level,
  the representation \(\overline{\rho}_F^{ss}\) is unramified at \(q_0\).
  The constituents of \(\calB_f\) are unramified at \(q_0\),
  because \(q_0\ne p_\frakP\) and \(f\) has level one.
  On the other hand,
  \(\overline{\chi}_E\) is ramified at \(q_0\).
  Hence every constituent of \(\overline{\chi}_E\calB_f\) is ramified at \(q_0\).
  Thus no constituent of \(\overline{\chi}_E\calB_f\) can occur in \(\overline{\rho}_F^{ss}\).

  It follows that \(\overline{\rho}_F^{ss}=\calB_f\).
  Therefore,
  for every prime \(\ell\ne p_\frakP\),
  we have
  \[
    L_\ell(X,F,\Spin) \equiv L_\ell(X,f)(1-\ell^{k+j-1}X)(1-\ell^{k-2}X) \pmod{\frakP}.
  \]

  It remains to treat the prime \(\ell=p=p_{\frakP}\).
  By the local-global compatibility at \(p\) recalled above,
  we have
  \[
    L_p(X,F,\Spin) = \det\left(1-X\varphi_p\mid D_{\cris}(\rho_F)\right).
  \]
  The Hodge--Tate weights of \(\rho_F\) are \(\{-(2k+j-3), -(k+j-1), -(k-2), 0\}\).
  Since \(p>2k+j-2\),
  these weights lie in the Fontaine--Laffaille range.
  Lemma~\ref{lem:FL-euler-reduction},
  applied to a stable lattice in \(\rho_F\),
  therefore gives
  \[
    L_p(X,F,\Spin) \equiv \det\left(1-X\Frob_p\mid (\overline{\rho}_F^{ss})^{I_p}\right) \pmod{\frakP}.
  \]

  By the residual congruence already proved,
  \[
    \overline{\rho}_F^{ss} = \calB_f = \overline{\rho}_f^{ss} \oplus \overline{\omega}^{-k+2} \oplus
    \overline{\omega}^{-k-j+1}.
  \]
  Since \(0<k-2<k+j-1<p-1\),
  the characters \(\overline{\omega}^{-k+2}\) and \(\overline{\omega}^{-k-j+1}\) are non-trivial on \(I_p\).
  Therefore
  \[
    (\overline{\rho}_F^{ss})^{I_p} = (\overline{\rho}_f^{ss})^{I_p}.
  \]
  Applying Lemma~\ref{lem:FL-euler-reduction} to a stable lattice in \(\rho_f\),
  we obtain
  \[
    \det\left(1-X\Frob_p\mid (\overline{\rho}_f^{ss})^{I_p}\right) \equiv L_p(X,f) \pmod{\frakP}.
  \]
  Thus
  \[
    L_p(X,F,\Spin) \equiv L_p(X,f) \pmod{\frakP}.
  \]
  Since
  \[
    (1-p^{k+j-1}X)(1-p^{k-2}X)\equiv 1\pmod{\frakP},
  \]
  we get
  \[
    L_p(X,F,\Spin) \equiv L_p(X,f)(1-p^{k+j-1}X)(1-p^{k-2}X) \pmod{\frakP}.
  \]
  This proves the desired congruence at \(\ell=p\).
\end{proof}

In the following sections,
we study sufficient conditions for \(G\) in Proposition~\ref{prop:main_n2_r1} to be of the form \(G=\Lift_E(F)\) for some Hecke cusp eigenform \(F\) on \(\Sp_2\).

\section{Galois representations and Selmer groups}
\label{sec:galois-selmer}

In this section,
whenever \(K_\frakP\)-linear representations occur,
\(K\) denotes a number field containing their coefficient fields,
and \(\frakP\) denotes a prime ideal of \(K\).
We write \(p=p_\frakP\) for the underlying rational prime,
\(\calO_\frakP\) for the valuation ring of \(K_\frakP\),
and \(\kappa_\frakP\) for its residue field.
We write \(\omega\) for the \(p\)-adic cyclotomic character with values in \(\calO_\frakP^\times\),
\(\overline{\omega}\) for its reduction modulo \(\frakP\),
\(\omega_E\) for the restriction of \(\omega\) to \(\Gal_E\),
\(\overline{\omega}_E\) for the restriction of \(\overline{\omega}\) to \(\Gal_E\),
and \(\widetilde{\omega}\) for the Teichm\"uller lift of \(\overline{\omega}\),
viewed as a Dirichlet character.

\subsection{Local conditions for Selmer groups}
\label{subsec:selmer-local-conditions}

We review the local conditions used in the Selmer groups below.
For a topological group \(G\) and a continuous \(G\)-module \(M\),
let \(H^1(G,M)\) be the first continuous cohomology group of \(G\) with coefficients in \(M\).
If \(L\) is a field,
we write \(H^1(L,M)\) for \(H^1(\Gal(\overline{L}/L),M)\).

If \(v\mid p\),
let \(E_{v,0}\) be the maximal unramified subfield of \(E_v\).

\begin{dfn}
  \label{dfn:crys-rep}
  Let \(v\mid p\),
  and let \(V\) be a finite-dimensional \(K_\frakP\)-representation of \(\Gal_{E_v}\).
  We put
  \[
    D_{\cris}(V)=\left(B_{\cris}\otimes_{\bbQ_p}V\right)^{\Gal_{E_v}},
  \]
  where \(B_{\cris}\) is the crystalline period ring of Fontaine \cite{Fontaine1994CorpsPeriodes}.
  We say that \(V\) is crystalline if
  \[
    \dim_{E_{v,0}\otimes_{\bbQ_p}K_\frakP}D_{\cris}(V)=\dim_{K_\frakP}V.
  \]
\end{dfn}

If \(V\) is crystalline,
then \(D_{\dR}(V)=\left(B_{\dR}\otimes_{\bbQ_p}V\right)^{\Gal_{E_v}}\) is identified with \(E_v\otimes_{E_{v,0}}D_{\cris}(V)\).
The decreasing filtration on \(D_{\dR}(V)\) gives the Hodge--Tate weights of \(V\).
We use the convention
\[
  \HT(V)=\left\{j\in\bbZ\mid \Fil^{-j}D_{\dR}(V)/\Fil^{-j+1}D_{\dR}(V)\ne0\right\}.
\]
With this convention,
the \(p\)-adic cyclotomic character \(\omega_p\) satisfies \(\HT(\omega_p)=\{1\}\).

\begin{dfn}
  \label{dfn:short-crys}
  Let \(v\mid p\),
  and let \(V\) be a crystalline \(K_\frakP\)-representation of \(\Gal_{E_v}\).
  We say that \(V\) is short crystalline if the following two conditions hold:
  \begin{enumerate}
    \item
          \(\Fil^0D_{\dR}(V)=D_{\dR}(V)\) and \(\Fil^pD_{\dR}(V)=0\).

    \item For every non-zero quotient \(V'\) of \(V\),
          the representation \(V'\otimes_{\bbQ_p}\bbQ_p(p-1)\) is ramified.
  \end{enumerate}
\end{dfn}

Condition \((1)\) is equivalent to \(\HT(V)\subset[-(p-1),0]\).
In particular,
if \(\HT(V)\subset[-(p-2),0]\),
then \(V\) is short crystalline.

\begin{dfn}
  \label{dfn:local-bk}
  Let \(V\) be a finite-dimensional \(K_\frakP\)-representation of \(\Gal_{E_v}\).
  We define \(H^1_f(E_v,V)\subset H^1(E_v,V)\) by
  \[
    H^1_f(E_v,V) =
    \begin{cases}
      \ker\left(H^1(E_v,V)\rightarrow H^1(I_v,V)\right),
       & v\nmid p,
      \\
      \ker\left(H^1(E_v,V)\rightarrow H^1(E_v,V\otimes_{\bbQ_p}B_{\cris})\right),
       & v\mid p,
    \end{cases}
  \]
  where \(I_v\subset \Gal_{E_v}\) is the inertia subgroup.
\end{dfn}

Thus,
for \(v\nmid p\),
the finite local condition is the unramified condition.
For \(v\mid p\),
it is the crystalline Bloch--Kato local condition \cite{BlochKato1990Tamagawa}.

\begin{dfn}
  \label{dfn:div-selmer}
  Let \(V\) be a finite-dimensional \(K_\frakP\)-representation of \(\Gal_E\),
  and let \(T\subset V\) be a \(\Gal_E\)-stable \(\calO_\frakP\)-lattice.
  Put \(A=V/T\).
  For each finite place \(v\) of \(E\),
  we define
  \[
    H^1_f(E_v,A)=\Im\left(H^1_f(E_v,V)\rightarrow H^1(E_v,A)\right).
  \]
  The Bloch--Kato Selmer group of \(A\) is
  \[
    H^1_f(E,A) = \ker\left( H^1(E,A)\rightarrow \bigoplus_v H^1(E_v,A)/H^1_f(E_v,A) \right),
  \]
  where \(v\) runs over all finite places of \(E\).
\end{dfn}

\begin{dfn}
  \label{dfn:res-bk-selmer}
  Let \(V\) be a finite-dimensional \(K_\frakP\)-representation of \(\Gal_E\),
  and let \(T\subset V\) be a \(\Gal_E\)-stable \(\calO_\frakP\)-lattice.
  Put \(\overline{T}=T/\frakP T\).
  For each finite place \(v\) of \(E\),
  define
  \[
    H^1_f(E_v,\overline{T}) =
    \begin{cases}
      H^1_{\ur}(E_v,\overline{T}),
       & v\nmid p,
      \\
      \Im\left(H^1_f(E_v,T)\rightarrow H^1(E_v,\overline{T})\right),
       & v\mid p,
    \end{cases}
  \]
  where \(H^1_f(E_v,T)\) is the inverse image of \(H^1_f(E_v,V)\) under \(H^1(E_v,T)\rightarrow H^1(E_v,V)\).
  Thus the residual finite condition is unramified away from \(p\),
  and is obtained above \(p\) by imposing the Bloch--Kato condition before reducing modulo \(\frakP\).

  More generally,
  if \(M\) is a finite-dimensional \(\kappa_\frakP\)-representation of \(\Gal_E\) endowed with local subgroups \(H^1_f(E_v,M)\subset H^1(E_v,M)\) for all finite places \(v\),
  we put
  \[
    H^1_f(E,M) = \ker\left( H^1(E,M)\rightarrow \prod_v H^1(E_v,M)/H^1_f(E_v,M) \right),
  \]
  where \(v\) runs over all finite places of \(E\).
\end{dfn}

Let \(S_E\) be the set of rational primes consisting of \(p\) and the primes ramified in \(E/\bbQ\).
We shall also use the \(S_E\)-imprimitive Selmer group,
obtained by relaxing the local conditions at the places above \(D_E\).

\begin{dfn}
  \label{dfn:res-selmer-E}
  Let \(M\) be a finite-dimensional \(\kappa_\frakP\)-representation of \(\Gal_E\).
  For each finite place \(v\) of \(E\),
  define
  \[
    H^1_{\calF_E}(E_v,M) =
    \begin{cases}
      H^1_{\ur}(E_v,M),
       & v\nmid pD_E,
      \\
      H^1_f(E_v,M),
       & v\mid p,
      \\
      H^1(E_v,M),
       & v\nmid p \text{ and } v\mid D_E.
    \end{cases}
  \]
  We define
  \[
    H^1_{\calF_E}(E,M) = \ker\left( H^1(E,M) \rightarrow \prod_v H^1(E_v,M)/H^1_{\calF_E}(E_v,M) \right).
  \]
\end{dfn}

\begin{dfn}
  \label{dfn:res-selmer-Q}
  Let \(N\) be a finite-dimensional \(\kappa_\frakP\)-representation of \(\Gal_{\bbQ}\).
  Suppose that local finite subgroups \(H^1_f(\bbQ_q,N)\subset H^1(\bbQ_q,N)\) are fixed for all rational primes \(q\),
  with the unramified condition at all but finitely many primes.
  We define the Bloch--Kato Selmer group over \(\bbQ\) by
  \[
    H^1_f(\bbQ,N) = \ker\left( H^1(\bbQ,N)\rightarrow \prod_q H^1(\bbQ_q,N)/H^1_f(\bbQ_q,N) \right).
  \]
  For each rational prime \(q\),
  define
  \[
    H^1_{\calF_E}(\bbQ_q,N) =
    \begin{cases}
      H^1_{\ur}(\bbQ_q,N),
       & q\nmid pD_E,
      \\
      H^1_f(\bbQ_q,N),
       & q=p,
      \\
      H^1(\bbQ_q,N),
       & q\mid D_E.
    \end{cases}
  \]
  We define
  \[
    H^1_{\calF_E}(\bbQ,N) = \ker\left( H^1(\bbQ,N) \rightarrow \prod_q H^1(\bbQ_q,N)/H^1_{\calF_E}(\bbQ_q,N) \right).
  \]
\end{dfn}

\begin{lem}
  \label{lem:lattice-selmer}
  Let \(L\) be either \(E\) or \(\bbQ\).
  Let \(R: \Gal_L\rightarrow \GL(V)\) be a continuous representation over \(K_\frakP\),
  and let \(T\subset V\) be a \(\Gal_L\)-stable \(\calO_\frakP\)-lattice.
  Put \(\overline{T}=T\otimes_{\calO_\frakP}\kappa_\frakP\).
  Assume the following conditions.
  \begin{enumerate}
    \item \(R\) is absolutely irreducible.

    \item \(R\) is unramified outside the finite places of \(L\) above
          \(pD_E\),
          and \(R|_{\Gal_{L_v}}\) is short crystalline for every \(v\mid p\).

    \item The residual semisimplification has the form
          \[
            \overline{T}^{ss} = S_1\oplus\cdots\oplus S_r,
          \]
          where the \(S_i\) are pairwise non-isomorphic irreducible \(\kappa_\frakP[\Gal_L]\)-modules and \(r\geq 2\).
  \end{enumerate}
  Then,
  after replacing \(T\) by another \(\Gal_L\)-stable lattice and taking a residual subquotient,
  there exist distinct indices \(a\ne b\) and a non-split exact sequence
  \[
    0 \rightarrow S_a \rightarrow X \rightarrow S_b \rightarrow 0.
  \]
  The corresponding extension class gives a non-zero element
  \[
    0\ne c_X \in H^1_{\calF_E} \left( L, \Hom_{\kappa_\frakP}(S_b,S_a) \right).
  \]
\end{lem}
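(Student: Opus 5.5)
The plan is the standard Ribet lattice argument, in the form used in \cite{Atobe2023HarderII}, followed by a check of the local conditions.

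\textbf{Step 1: producing a non-split extension.} The key input is a lemma of Bella\"iche--Chenevier type, a refinement of Ribet's lemma. Since \(R\) is absolutely irreducible and \(\overline{T}^{ss}\) has \(r\ge 2\) pairwise non-isomorphic constituents, the \(S_i\) can be organized into a directed graph: put an edge \(S_b\to S_a\) when there is a lattice \(T'\) and a subquotient of \(\overline{T'}\) that is a non-split extension of \(S_b\) by \(S_a\).

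\begin{itemize}
\item \(T\) is finitely generated over the discrete valuation ring \(\calO_\frakP\), so there are finitely many homothety classes of lattices with a given residual filtration shape.
\item If this graph had no edges, the pseudocharacter would be a sum over a proper nontrivial partition.
\item Alternatively, argue directly. Choose a lattice \(T\) for which \(\overline{T}\) has a \(\Gal_L\)-stable subspace \(W\). Take \(T'=\frakP T+\widetilde W\), where \(\widetilde W\) is the preimage of \(W\) in \(T\). Iterating this eventually yields a lattice whose reduction has a non-split length-two subquotient between distinct constituents.
\item If every reduction were semisimple, iterating would give a \(\Gal_L\)-stable \(K_\frakP\)-subspace of \(V\), contradicting absolute irreducibility. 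This is Ribet's original argument.
\end{itemize}

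Distinctness \(a\ne b\) follows because the \(S_i\) are pairwise non-isomorphic: the multiplicity-one condition forces the extension to join two different constituents. The construction also needs the lattice to be stable under the whole group, which holds by construction.

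\textbf{Step 2: the local conditions.} Identify the class \(c_X\in H^1(L,\Hom(S_b,S_a))\) and check the three types of places in \(\calF_E\).

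\begin{itemize}
\item \emph{Places \(v\nmid pD_E\).} \(R\) is unramified at \(v\), so \(I_v\) acts trivially on every lattice and on its subquotients. Hence \(X\) is unramified and \(c_X|_{I_v}=0\).
\item \emph{Places \(v\mid D_E\) with \(v\nmid p\).} The condition is the full \(H^1\), so there is nothing to check.
\item \emph{Places \(v\mid p\).} This is the main step. Since \(R|_{\Gal_{L_v}}\) is short crystalline, \(T|_{\Gal_{L_v}}\) lies in the Fontaine--Laffaille essential image. The subquotient \(X\) is the reduction of a subquotient lattice, so it corresponds to a Fontaine--Laffaille module by exactness and full faithfulness of the functor in the short range; condition (2) of Definition~\ref{dfn:short-crys} rules out the boundary weight \(p-1\) issues. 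Therefore the extension \(X\) lies in the essential image of the Fontaine--Laffaille functor. By the standard comparison (Bloch--Kato, Lemma 4.4 style), classes of such "finite" extensions coincide with \(\Im(H^1_f(L_v,T')\to H^1(L_v,\overline{T'}))\) for an appropriate lattice \(T'\) in \(\Hom(V_b,V_a)\)-type representations. Here one uses the definition of \(H^1_f\) on \(M=\Hom(S_b,S_a)\) via the residual Bloch--Kato condition of Definition~\ref{dfn:res-bk-selmer}, interpreted through the Fontaine--Laffaille category.
\end{itemize}

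\textbf{Main obstacle.} The \(v\mid p\) step is the expected difficulty. The task is to justify that the residual "flat/FL" condition on the extension coincides with the condition \(H^1_f(L_v,M)\) as defined. The intended route is to take \(H^1_f\) for residual modules to mean the Fontaine--Laffaille extension classes, and to cite the comparison in \cite{Atobe2023HarderII}. When \(L=E\), the places \(v\mid p\) are unramified over \(\bbQ_p\) since \(p\nmid D_E\), so Fontaine--Laffaille theory applies verbatim.
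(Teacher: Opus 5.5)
Your proposal is correct and follows the paper's route: the paper also gets the non-split extension between distinct constituents from the lattice argument of \cite[Corollary~7.5]{Atobe2023HarderII}. It then checks the local conditions as you do: unramified away from \(pD_E\), no condition above \(D_E\), and at \(v\mid p\) the short-crystalline/Fontaine--Laffaille comparison \cite[Lemma~7.6]{Atobe2023HarderII} applied over \(L_v\), which is legitimate for \(L=E\) because \(E_v/\bbQ_p\) is unramified when \(p\nmid D_E\), as you note and the paper leaves implicit.
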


\begin{proof}
  Since \(R\) is absolutely irreducible and \(\overline{T}^{ss}\) has at least two pairwise non-isomorphic constituents,
  the lattice argument of \cite[Corollary~7.5]{Atobe2023HarderII} gives,
  after replacing \(T\) and taking a residual subquotient,
  a non-split exact sequence
  \[
    0 \rightarrow S_a \rightarrow X \rightarrow S_b \rightarrow 0
  \]
  for some \(a\ne b\).
  This exact sequence defines a non-zero class
  \[
    0\ne c_X \in H^1 \left( L, \Hom_{\kappa_\frakP}(S_b,S_a) \right).
  \]

  We verify the local conditions.
  If \(v\nmid pD_E\),
  then \(R\) is unramified at \(v\).
  Hence the residual extension obtained from a stable lattice is unramified at \(v\).
  If \(v\mid p\),
  then \(R|_{\Gal_{L_v}}\) is short crystalline,
  and \cite[Lemma~7.6]{Atobe2023HarderII},
  applied over \(L_v\),
  shows that \(c_X\) lies in the residual Bloch--Kato finite local condition.
  If \(v\mid D_E\),
  no local condition is imposed by definition.
  Therefore
  \[
    0\ne c_X \in H^1_{\calF_E} \left( L, \Hom_{\kappa_\frakP}(S_b,S_a) \right).
  \]
\end{proof}

\begin{lem}
  \label{lem:partitioned-lattice-selmer}
  Retain the assumptions of Lemma~\ref{lem:lattice-selmer}.
  Suppose in addition that the set of residual constituents is written as a disjoint union \(\{S_1,\ldots,S_r\}=\calA\amalg\calC\).
  Then,
  after replacing the stable lattice and taking a residual subquotient,
  there is a non-split exact sequence
  \[
    0\rightarrow S_a\rightarrow X\rightarrow S_c\rightarrow0
  \]
  or
  \[
    0\rightarrow S_c\rightarrow X\rightarrow S_a\rightarrow0,
  \]
  for some \(S_a\in\calA\) and \(S_c\in\calC\).
  Its extension class is a non-zero element of the corresponding \(S_E\)-imprimitive Selmer group.
\end{lem}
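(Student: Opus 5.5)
The plan is to reduce Lemma~\ref{lem:partitioned-lattice-selmer} to a Ribet-type lattice argument applied to a partition rather than to a single pair of constituents, and then to reuse the local verification from the proof of Lemma~\ref{lem:lattice-selmer}. Both \(\calA\) and \(\calC\) are taken to be non-empty, which is implicit since otherwise the statement is vacuous. The key input is that the lattice argument of \cite[Corollary~7.5]{Atobe2023HarderII} is really a statement about partitions of the residual constituents. For any decomposition of the constituents into two non-empty sets, absolute irreducibility of \(R\) forbids the existence of a stable lattice whose reduction is a direct sum of a representation with constituents in \(\calA\) and one with constituents in \(\calC\). Otherwise the idempotents of the reduction would lift, through the \(\calO_\frakP\)-algebra generated by \(R(\Gal_L)\), to a non-trivial idempotent, contradicting \(\calO_\frakP[R(\Gal_L)]\otimes K_\frakP=\End(V)\). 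This is exactly the pseudo-character/Bellaïche--Chenevier style argument. Since the constituents are pairwise non-isomorphic and multiplicity-free, the set of non-zero residual extensions between constituents defines a directed graph on \(\{S_1,\ldots,S_r\}\).

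First, I will show that this graph is connected, in the sense that it cannot be split along \(\calA\amalg\calC\) with no edges across. Second, I will choose the lattice as Ribet does, using the generalized matrix algebra (GMA) structure of the image. Because the \(S_i\) are pairwise non-isomorphic, \(\calO_\frakP[R(\Gal_L)]\) is a GMA with respect to the block decomposition. Its off-diagonal ideals \(B_{a,c}\) and \(B_{c,a}\) then control extensions, in the sense that \(\Hom(B_{c,a}/\frakm B_{c,a}\cdots,\kappa_\frakP)\) injects into \(\Ext^1(S_c,S_a)\). If every \(B_{a,c}B_{c,a'}\) with \(a\in\calA\) and \(c\in\calC\) were contained in the maximal ideal, together with the reverse products, then the reducibility ideal for the partition would be proper. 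Bellaïche--Chenevier then yield a lattice, together with a residual representation, that splits along the partition, and this contradicts absolute irreducibility. Hence some \(B_{a,c}\), or some \(B_{c,a}\), is non-zero modulo the appropriate ideal. Choosing the lattice adapted to this block gives a non-split extension \(0\to S_a\to X\to S_c\to0\) or its reverse, realized as a subquotient of \(\overline{T}'\).

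Finally, the local conditions are checked exactly as in Lemma~\ref{lem:lattice-selmer}. At \(v\nmid pD_E\), \(R\) is unramified, so every subquotient of the reduction of any stable lattice is unramified, and hence the class is unramified. At \(v\mid p\), the short crystalline hypothesis lets me apply \cite[Lemma~7.6]{Atobe2023HarderII}, which places the class in the residual Bloch--Kato condition. At \(v\mid D_E\) no condition is imposed. Thus the extension class is a non-zero element of \(H^1_{\calF_E}(L,\Hom(S_c,S_a))\), or of \(H^1_{\calF_E}(L,\Hom(S_a,S_c))\).

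The main obstacle is the second step, namely producing an edge that crosses the partition rather than an arbitrary edge. I expect to handle it through the GMA/reducibility-ideal formalism, which is more robust than iterating the pairwise statement of Lemma~\ref{lem:lattice-selmer}.
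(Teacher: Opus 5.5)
Your outline follows the same route as the paper. The paper applies the lattice argument of \cite[Corollary~7.5]{Atobe2023HarderII} to the two-block decomposition $\calA\amalg\calC$ and then repeats the local verification of Lemma~\ref{lem:lattice-selmer}, and your local verification is fine. The gap is in the step you yourself single out: both justifications you give for the existence of a crossing extension are wrong.

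First, absolute irreducibility of $R$ does \emph{not} forbid a stable lattice whose reduction splits along $\calA\amalg\calC$. Already in dimension two with $\overline{T}^{ss}=S_1\oplus S_2$, the homothety classes of stable lattices form a segment in the Bruhat--Tits tree. Every lattice in the interior of that segment (non-empty as soon as the segment has length at least two) has reduction $S_1\oplus S_2$; only the endpoints are forced to be non-split. Your idempotent argument fails because the lifted idempotent lies in $\calO_\frakP[R(\Gal_L)]$ but does not commute with $R(\Gal_L)$. The $\Gal_L$-equivariant projection on $\overline{T}$ does not lift to $\End_{\calO_\frakP[\Gal_L]}(T)=\calO_\frakP$, and a non-central idempotent is perfectly compatible with $\calO_\frakP[R(\Gal_L)]\otimes K_\frakP=\End(V)$. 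Indeed, such idempotents are exactly what produce the GMA structure you invoke next.

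Second, $B_{a,c}B_{c,a}\subset\frakm$ holds automatically for $a\ne c$. Likewise, the reducibility ideal $\sum_{a\in\calA,c\in\calC}B_{a,c}B_{c,a}$ of the partition is automatically proper, since $\overline{T}^{ss}$ already decomposes along $\calA\amalg\calC$. So ``proper reducibility ideal, hence a contradiction'' would prove that no absolutely irreducible $R$ is ever residually reducible. What irreducibility actually gives is only that this ideal is \emph{non-zero}, i.e.\ that the cross modules $B_{a,c}$ are non-zero.

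The missing idea is the passage from ``cross modules non-zero'' to ``some cross class non-zero''. Put $B'_{a,c}=\sum_{k\ne a,c}B_{a,k}B_{k,c}$. By \cite[\S1.5]{Bellaiche2009Families} there is an injection $\Hom(B_{a,c}/B'_{a,c},\kappa_\frakP)\hookrightarrow\Ext^1(S_c,S_a)$, so you must find a cross pair with $B_{a,c}\ne B'_{a,c}$. Suppose $B_{a,c}=B'_{a,c}$ for all $a\in\calA$ and $c\in\calC$. Expanding each cross term $r-1$ times writes every element of $B_{a,c}$ as a sum of products along paths with exactly one cross edge, and with either at least $\#\calA$ edges inside $\calA$ or at least $\#\calC$ edges inside $\calC$. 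Every such path contains a cycle, and cycle products lie in $\frakm$ because $B_{i,j}B_{j,i}\subset\frakm$. Hence $B_{a,c}=\frakm B_{a,c}$, so $B_{a,c}=0$ by Nakayama (these are fractional ideals of $K_\frakP$). Then $\bigoplus_{c\in\calC}K_\frakP^{d_c}$ is a proper non-zero $\Gal_L$-stable subspace, a contradiction.

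You must then still realize the class inside the reduction of a stable lattice, because the Fontaine--Laffaille verification at $v\mid p$ concerns extensions coming from lattices in the short crystalline $R$. Write $B_{i,j}=\varpi^{n_{i,j}}\calO_\frakP$ in a GMA-adapted basis; then the lattice $\bigoplus_i\varpi^{-n_{a,i}}\calO_\frakP^{d_i}$ works. The condition $B_{a,c}\ne B'_{a,c}$, i.e.\ $n_{a,c}<n_{a,k}+n_{k,c}$ for all $k\ne a,c$, makes the $S_a$- and $S_c$-components span a subrepresentation of its reduction. That subrepresentation is the desired non-split extension $0\to S_a\to X\to S_c\to0$. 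This is precisely the content the paper imports from \cite[Corollary~7.5]{Atobe2023HarderII}, and your phrase ``choosing the lattice adapted to this block'' leaves it unproved.
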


\begin{proof}
  Apply the argument of \cite[Corollary~7.5]{Atobe2023HarderII} to the decomposition of the residual representation into the two non-empty sums indexed by \(\calA\) and \(\calC\).
  If no residual extension crossed this partition,
  the two sums would lift to a proper decomposition of \(R\),
  contradicting absolute irreducibility.
  Hence a non-split cross extension exists after changing the stable lattice and passing to a residual subquotient.
  The verification of the local conditions is identical to that in Lemma~\ref{lem:lattice-selmer}.
\end{proof}

\subsection{Bella\"iche--Chenevier signs}
\label{subsec:bc-signs}

We recall the sign formalism for self-dual and conjugate self-dual Galois representations developed by Bella\"iche--Chenevier \cite{Bellaiche2009Families,
  BellaicheChenevier2011Sign}.
For self-dual representations,
we use the bilinear-form formulation of \cite[Section~1.8]{Bellaiche2009Families}.
For conjugate self-dual representations,
we use the sign defined in \cite[Section~1.1]{BellaicheChenevier2011Sign}.
We refer to these signs as the Bella\"iche--Chenevier signs,
or briefly the BC signs.
They should not be confused with the conjugate-orthogonal or conjugate-symplectic parity of automorphic parameters used in \cite[Section~2.2]{Mok2015Endoscopic}.

\begin{dfn}
  \label{dfn:self-duality-sign}
  Let \(K\) be a field of characteristic different from \(2\).
  Fix \(c\in\Gal_\bbQ\setminus\Gal_E\),
  and,
  for a representation \(\rho\) of \(\Gal_E\),
  put
  \[
    \rho^c(g)=\rho(cgc^{-1}).
  \]

  Let \(F\) be either \(\bbQ\) or \(E\).
  Let \(\rho:\Gal_F\rightarrow\GL_n(K)\) be a representation,
  and let \(\mu:\Gal_F\rightarrow K^\times\) be a character.
  When \(F=E\),
  assume that \(\mu^c=\mu\).

  A self-duality of \(\rho\) if \(F=\bbQ\),
  or a conjugate self-duality of \(\rho\) if \(F=E\),
  with multiplier \(\mu\),
  has BC sign \(\epsilon\in\{\pm1\}\) if it is represented by a matrix \(B\in\GL_n(K)\) satisfying
  \[
    \begin{cases}
      {}^t\rho(g)B\rho(g)=\mu(g)B,
       & F=\bbQ,
      \\[3pt]
      {}^t\rho^c(g)B\rho(g)=\mu(g)B,
       & F=E,
    \end{cases}
  \]
  and
  \[
    {}^tB=\epsilon B
  \]
  for every \(g\in\Gal_F\).

  If \(\rho\) is absolutely irreducible,
  Schur's lemma shows that its BC sign is independent of the representing matrix.
\end{dfn}

\begin{dfn}
  \label{dfn:bc-extension-sign}
  Let \(K\) be a field of characteristic different from \(2\).
  Let \(F\) be either \(\bbQ\) or \(E\).
  For a finite-dimensional \(K\)-representation \(V\) of \(\Gal_F\),
  and a character \(\mu:\Gal_F\rightarrow K^\times\),
  with \(\mu^c=\mu\) when \(F=E\),
  define
  \[
    D_{\mu,F}(V) = \begin{cases}
      V^\vee\otimes\mu,
       & F=\bbQ,
      \\[3pt]
      (V^c)^\vee\otimes\mu,
       & F=E.
    \end{cases}
  \]

  Let \(S\) and \(T\) be finite-dimensional \(K\)-representations of \(\Gal_F\),
  and fix an isomorphism
  \[
    T\simeq D_{\mu,F}(S).
  \]
  Dualizing extensions through this identification defines an involution
  \[
    \tau_F: H^1\left(F,\Hom_K(T,S)\right) \rightarrow H^1\left(F,\Hom_K(T,S)\right).
  \]
  It preserves the unramified and crystalline local conditions whenever these conditions are defined.

  For \(\epsilon\in\{\pm1\}\),
  we say that an extension class \(x\in H^1\left(F,\Hom_K(T,S)\right)\) has BC sign \(\epsilon\) if
  \[
    \tau_F(x)=\epsilon x.
  \]

  For any \(\tau_F\)-stable Selmer structure \(\calL\) on \(\Hom_K(T,S)\),
  the subgroup of Selmer classes with BC sign \(\epsilon\) is
  \[
    H^1_{\calL} \left(F,\Hom_K(T,S)\right)^{\epsilon} = \ker(\tau_F-\epsilon).
  \]
\end{dfn}

\subsection{Selmer groups for one-dimensional characters}
\label{subsec:one-dim}

From now on,
we fix positive integers \(k\) and \(j\) with \(k\ge 4\) and \(j\) even,
as in the preceding sections.
We also fix a primitive elliptic cusp form \(f\) of weight \(2k+j-2\).
We keep the notation \(p=p_\frakP\).
Throughout this subsection we assume that \(p\ne2\) and \(p\nmid D_E\).

We denote by \(\frakA_\frakP(\Gal_E)\) the Grothendieck ring of finite-dimensional continuous representations of \(\Gal_E\) over \(\kappa_\frakP\).
By abuse of notation,
we use the same symbol for a residual representation and for its class in \(\frakA_\frakP(\Gal_E)\).

\begin{dfn}
  For an integer \(a\),
  we say that \(\Tame(a)\) holds if \(q^{a-1}\not\equiv1\pmod p\) for every rational prime \(q\mid D_E\).
\end{dfn}

\begin{lem}
  \label{lem:omega-odd-selmer}
  Let \(a\) be an odd integer with \(1-p<a<0\).
  Assume that \(\Tame(a)\) holds.
  If \(\frakP\nmid L(0,\widetilde{\omega}^{-a})\),
  then
  \[
    H^1_{\calF_E}(\bbQ,\overline{\omega}^{a})=0.
  \]
\end{lem}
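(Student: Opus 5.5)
The plan is to reduce the \(S_E\)-imprimitive Selmer group to the ordinary Bloch--Kato Selmer group, using \(\Tame(a)\), and then to identify the latter with an eigenspace of the \(p\)-part of the class group of \(\bbQ(\zeta_p)\), which Herbrand--Ribet controls.

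\emph{Step 1: the primes dividing \(D_E\).} Let \(q\mid D_E\). Since \(p\nmid D_E\), we have \(q\ne p\), so \(\overline{\omega}^{a}\) is unramified at \(q\) with \(\overline{\omega}^a(\Frob_q)=q^{-a}\). For a one-dimensional module over \(\bbQ_q\), the local Euler characteristic formula gives
\[
  \dim H^1(\bbQ_q,\overline{\omega}^a)=\dim H^0(\bbQ_q,\overline{\omega}^a)+\dim H^2(\bbQ_q,\overline{\omega}^a).
\]
Moreover \(\dim H^1_{\ur}=\dim H^0\). By local Tate duality, \(H^2(\bbQ_q,\overline{\omega}^a)\) is dual to \(H^0(\bbQ_q,\overline{\omega}^{1-a})\). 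This group is non-zero exactly when \(q^{a-1}\equiv1\pmod p\), which \(\Tame(a)\) excludes. Hence \(H^1(\bbQ_q,\overline{\omega}^a)=H^1_{\ur}(\bbQ_q,\overline{\omega}^a)\), and so
\[
  H^1_{\calF_E}(\bbQ,\overline{\omega}^a)=H^1_f(\bbQ,\overline{\omega}^a),
\]
the Selmer group with the unramified condition away from \(p\) and the residual Fontaine--Laffaille condition at \(p\).

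\emph{Step 2: the prime \(p\).} The Hodge--Tate weight \(a\) lies in \([-(p-2),0]\), and \(a\not\equiv 0,1\pmod{p-1}\). I would use the Fontaine--Laffaille description of the residual finite condition, as in \cite[\S7]{Atobe2023HarderII}. Since \(\overline{\omega}^a\) is non-trivial on \(I_p\), the finite part of \(H^1(\bbQ_p,\overline{\omega}^a)\) should have dimension \(\dim H^0(\bbQ_p,\overline{\omega}^a)=0\) for this weight range. Consequently any finite class, restricted to \(\bbQ_p(\zeta_p)\), becomes unramified above \(p\). This local computation is the step I expect to need the most care: one has to check the Fontaine--Laffaille dimension count with the paper's sign convention, \(\HT(\omega)=\{1\}\) and \(a<0\), and make sure the edge case \(a=2-p\) causes no trouble.

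\emph{Step 3: class field theory.} Take a non-zero class \(c\in H^1_f(\bbQ,\overline{\omega}^a)\). Since \([\bbQ(\zeta_p):\bbQ]\) is prime to \(p\), restriction to \(\Gal_{\bbQ(\zeta_p)}\) is injective, and \(c\) becomes a \(\Delta\)-equivariant homomorphism \(\Gal_{\bbQ(\zeta_p)}\to\kappa_\frakP(\overline{\omega}^a)\), where \(\Delta=\Gal(\bbQ(\zeta_p)/\bbQ)\). By Steps 1 and 2 it is unramified everywhere. Its kernel therefore cuts out an unramified abelian \(p\)-extension of \(\bbQ(\zeta_p)\) on which \(\Delta\) acts through \(\omega^a\). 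This forces
\[
  (\Cl(\bbQ(\zeta_p))\otimes\calO_\frakP/\frakP)^{\omega^a}\ne0 .
\]

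\emph{Step 4: Herbrand--Ribet.} The exponent \(a\) is odd and satisfies \(a\not\equiv1\pmod{p-1}\). By Herbrand's theorem, a non-zero \(\omega^a\)-eigenspace forces \(\frakP\mid B_{1,\widetilde{\omega}^{-a}}\). The character \(\widetilde{\omega}^{-a}\) is odd and non-trivial, so
\[
  L(0,\widetilde{\omega}^{-a})=-B_{1,\widetilde{\omega}^{-a}}.
\]
This contradicts the hypothesis \(\frakP\nmid L(0,\widetilde{\omega}^{-a})\), so no non-zero class exists and \(H^1_{\calF_E}(\bbQ,\overline{\omega}^{a})=0\). Throughout I would keep the eigenspace convention \(\omega^a\) versus \(\omega^{-a}\) consistent with geometric Frobenius, since an error there would swap which Bernoulli number appears.
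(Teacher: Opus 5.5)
Your proposal is correct and has the same skeleton as the paper's proof. You restrict to \(\Gal_{\bbQ(\zeta_p)}\), which is injective since \([\bbQ(\zeta_p):\bbQ]=p-1\) is prime to \(p\). You then show the resulting \(\Delta\)-equivariant homomorphism is unramified everywhere, and conclude by class field theory and Herbrand's theorem. The differences are confined to the two local steps.

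At \(q\mid D_E\), you use the local Euler characteristic formula and Tate duality to show \(H^1(\bbQ_q,\overline\omega^a)=H^1_{\ur}(\bbQ_q,\overline\omega^a)\) under \(\Tame(a)\), so that \(H^1_{\calF_E}\) coincides with the strict Selmer group. The paper instead evaluates the restricted cocycle on tame inertia and uses \(\sigma x\sigma^{-1}=x^{q^{-1}}\) to force \(q^{a-1}\equiv 1\pmod p\). This is the same obstruction seen from two sides; your version has the mild advantage of removing the relaxed local condition before globalizing.

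At \(p\), you do not need a Fontaine--Laffaille dimension count. The residual condition is by definition the image of the integral Bloch--Kato condition. We have \(H^1_f(\bbQ_p,K_\frakP(a))=0\) for \(a<0\) by \cite[Example~3.9]{BlochKato1990Tamagawa}. Moreover \(H^1(\bbQ_p,T)\) is torsion-free because \(H^0(\bbQ_p,\overline\omega^a)=0\). This is the paper's argument, and it treats every \(a\) in the range uniformly.

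One correction: \(a=2-p\) does satisfy \(a\equiv 1\pmod{p-1}\), so your assertion \(a\not\equiv 1\) in Steps~2 and~4 is false in that case. There \(\overline\omega^a=\overline\omega\), and Herbrand's theorem in the range \(3\le i\le p-2\) does not literally apply. However, the \(\omega\)-eigenspace of the \(p\)-part of the class group of \(\bbQ(\zeta_p)\) vanishes unconditionally, and \(B_{1,\widetilde\omega^{-1}}\) is not even \(p\)-integral. So your conclusion is unaffected, and your Step~2 worry about this edge case is likewise resolved by the argument above. In the paper's application \(a=-j-1\) with \(p>j+3\), so this case never arises.
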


\begin{proof}
  Put \(L=\bbQ(\zeta_p)\) and \(\Delta_p=\Gal(L/\bbQ)\).
  It suffices to show that every class \(c\in H^1_{\calF_E}(\bbQ,\overline{\omega}^{a})\) is zero.
  Since \(\overline{\omega}^{a}\) is trivial on \(\Gal_L\) and \(p\nmid\#\Delta_p\),
  inflation--restriction gives an injection
  \[
    H^1(\bbQ,\overline{\omega}^{a}) \hookrightarrow H^1(L,\kappa_\frakP)^{\overline{\omega}^{-a}}.
  \]
  We denote by \(\widetilde c\) the image of \(c\) in \(H^1(L,\kappa_\frakP)^{\overline{\omega}^{-a}}\).

  We first show that \(\widetilde c\) is unramified outside \(p\).
  At rational primes \(\ell\nmid pD_E\),
  this follows from the definition of \(H^1_{\calF_E}\).

  Fix a rational prime \(q\mid D_E\),
  and choose a prime \(v\) of \(L\) above \(q\).
  Since \(q\ne p\),
  the extension \(L/\bbQ\) is unramified at \(q\).
  Suppose that \(\widetilde c\) is ramified at \(v\).
  Then there exists an element \(x\) in the tame inertia quotient at \(v\) such that
  \[
    b:=\widetilde c(x)\ne0.
  \]
  Let \(\sigma\) be a geometric Frobenius lift at \(v\).
  Then the tame relation is
  \[
    \sigma x\sigma^{-1}=x^{q^{-1}}.
  \]
  Applying \(\widetilde c\),
  we get
  \[
    \widetilde c(\sigma x\sigma^{-1})=q^{-1}b.
  \]
  On the other hand,
  since \(\widetilde c\) is the restriction of the cocycle \(c\),
  the cocycle relation gives
  \[
    \widetilde c(\sigma x\sigma^{-1}) =\overline{\omega}^{a}(\sigma)\widetilde c(x) =q^{-a}b.
  \]
  Thus \(q^{-1}b=q^{-a}b\),
  and since \(b\ne0\),
  we get
  \[
    q^{a-1}\equiv1\pmod{p}.
  \]
  This contradicts \(\Tame(a)\).
  Therefore \(\widetilde c\) is unramified at every prime of \(L\) above \(D_E\).

  We next consider the local condition at \(p\).
  Since \(a<0\),
  the standard calculation for negative Tate twists gives
  \[
    H^1_f(\bbQ_p,K_\frakP(a))=0
  \]
  \cite[Example~3.9]{BlochKato1990Tamagawa}.
  Moreover,
  \(1-p<a<0\) implies that \(\overline\omega^a\) is non-trivial on \(\Gal_{\bbQ_p}\).
  Hence
  \[
    H^0\left(\bbQ_p,K_\frakP/\calO_\frakP(a)\right)=0.
  \]
  The cohomology sequence therefore shows that
  \[
    H^1(\bbQ_p,\calO_\frakP(a)) \rightarrow H^1(\bbQ_p,K_\frakP(a))
  \]
  is injective.
  Consequently the integral,
  and hence the residual,
  finite local condition at \(p\) is zero.
  Thus the restriction of \(c\) to \(\Gal_{\bbQ_p}\) is trivial,
  and hence its restriction to \(\Gal_{L_w}\) is trivial for every \(w\mid p\).
  It follows that \(\widetilde c\) is unramified at every finite prime of \(L\).

  By global class field theory,
  \(\widetilde c\) defines an element of \(\Hom_{\bbF_p}(C_L/pC_L,\kappa_\frakP)^{\overline{\omega}^{-a}}\),
  where \(C_L\) is the \(p\)-primary part of the ideal class group of \(L\).
  By Herbrand's theorem (see \cite{Herbrand1932Classes},
  \cite[Theorem~6.17]{Washington1997Cyclotomic}),
  the assumption \(\frakP\nmid L(0,\widetilde{\omega}^{-a})\) implies
  \[
    \Hom_{\bbF_p}(C_L/pC_L,\kappa_\frakP)^{\overline{\omega}^{-a}}=0.
  \]
  Hence \(\widetilde c=0\).
  By the injectivity of the restriction map,
  we get \(c=0\).
  Therefore
  \[
    H^1_{\calF_E}(\bbQ,\overline{\omega}^{a})=0.
  \]
\end{proof}

\begin{lem}
  \label{lem:chiE-selmer}
  Put \(h_E=\#\Cl(E)\),
  the class number of \(E\).
  Assume that \(p\nmid h_E\) and \(\Tame(0)\) holds.
  Then
  \[
    H^1_{\calF_E}(\bbQ,\overline{\chi}_E)=0.
  \]
\end{lem}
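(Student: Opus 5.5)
Let \(c\in H^1_{\calF_E}(\bbQ,\overline{\chi}_E)\); we will show \(c=0\) by the same inflation--restriction strategy as in Lemma~\ref{lem:omega-odd-selmer}, with \(E\) in place of \(\bbQ(\zeta_p)\). Since \(\overline{\chi}_E\) is trivial on \(\Gal_E\) and \(p\ne2=[E:\bbQ]\), inflation--restriction gives an injection
\[
  H^1(\bbQ,\overline{\chi}_E)\hookrightarrow H^1(E,\kappa_\frakP)^{\overline{\chi}_E}=\Hom(\Gal_E,\kappa_\frakP)^{\overline{\chi}_E},
\]
and we write \(\widetilde c\) for the image of \(c\), a homomorphism \(\Gal_E\to\kappa_\frakP\) on which \(\Gal(E/\bbQ)\) acts through \(\chi_E\). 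The plan is to show that \(\widetilde c\) is unramified everywhere, so that it factors through \(\Cl(E)\otimes\bbF_p\). Since \(p\nmid h_E\), this group is zero, and the injectivity above gives \(c=0\).

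For primes \(\ell\nmid pD_E\), unramifiedness is part of the definition of \(H^1_{\calF_E}\). For \(q\mid D_E\), the local condition is relaxed, so we must argue as in the tame step of Lemma~\ref{lem:omega-odd-selmer}. Let \(v\) be the prime of \(E\) above \(q\). Because \(q\ne p\), \(\widetilde c|_{I_v}\) factors through the tame quotient, and in fact through its maximal pro-\(p\) quotient. Take \(x\) in tame inertia at \(v\) and a geometric Frobenius lift \(\sigma\in\Gal_{E_v}\). The relation \(\sigma x\sigma^{-1}=x^{q_v^{-1}}\), with \(q_v=q\) since \(v\) is ramified, gives \(\widetilde c(\sigma x\sigma^{-1})=q^{-1}\widetilde c(x)\). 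On the other hand \(\widetilde c\) is a homomorphism on \(\Gal_E\) and \(\sigma\in\Gal_E\), so \(\widetilde c(\sigma x\sigma^{-1})=\widetilde c(x)\). Hence \((q^{-1}-1)\widetilde c(x)=0\), and \(\Tame(0)\), i.e.\ \(q^{-1}\not\equiv1\pmod p\), forces \(\widetilde c(x)=0\). This tame step at the ramified primes is the delicate point, since the \(\calF_E\)-condition imposes nothing there. An alternative route is to use the \(\chi_E\)-equivariance under an element \(c_0\) of inertia at \(q\) lying in \(\Gal_\bbQ\setminus\Gal_E\).

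At \(p\), which is unramified in \(E\) because \(p\nmid D_E\), the local condition is \(H^1_f(\bbQ_p,\overline{\chi}_E)\), defined as the image of the integral crystalline classes. Since \(\chi_E\) is unramified at \(p\), the representation \(K_\frakP(\chi_E)\) is unramified with Hodge--Tate weight \(0\). Its \(H^1_f\) is therefore the unramified subspace, and reducing modulo \(\frakP\) gives a subgroup of the unramified classes. Concretely, \(H^1_f(\bbQ_p,\calO_\frakP(\chi_E))\) consists of unramified classes, so its image consists of unramified residual classes; if needed, this follows from Fontaine--Laffaille theory, as in \cite[Lemma~7.6]{Atobe2023HarderII}. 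Thus \(c|_{I_p}=0\), and \(\widetilde c\) is unramified at the primes above \(p\).

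Hence \(\widetilde c\) is unramified at every finite prime of \(E\), and the class field theory argument of the first paragraph completes the proof.
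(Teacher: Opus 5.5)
Your proof is correct, and it has the same overall shape as the paper's: restrict to $\Gal_E$, control ramification, and conclude by class field theory of $E$. The difference is in how the ramified primes are handled.

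\emph{Ramified primes.} The paper uses the $\calF_E$-conditions only to see that $\widetilde c$ is unramified outside the primes above $D_E$. It then lets $\widetilde c$ factor through a ray class group $\Cl_\frakm(E)$ with $\frakm$ supported above $D_E$. It kills the $p$-part of $\Cl_\frakm(E)$ using the sequence $\calO_E^\times\to(\calO_E/\frakm)^\times\to\Cl_\frakm(E)\to\Cl(E)\to1$: the $p$-part of $(\calO_E/v^n)^\times$ injects into $k_v^\times=\bbF_q^\times$, which has no $p$-torsion by $\Tame(0)$.

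You instead prove unramifiedness at $q\mid D_E$ directly, via the tame-inertia relation exactly as in Lemma~\ref{lem:omega-odd-selmer}. After that you only need the Hilbert class field. The two mechanisms are the Galois-side and idelic-side forms of the same fact: the maximal pro-$p$ quotient of the inertia subgroup of $\Gal_{E_v}^{\mathrm{ab}}$ is the $p$-part of $\bbF_q^\times$. So both proofs use $\Tame(0)$ and $p\nmid h_E$ in the same way. Yours has the merit of being uniform with the earlier lemma and of avoiding moduli.

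\emph{Your aside about $c_0\in I_q\setminus I_v$.} Carried out, it would actually give more. For odd $q$, the wild inertia of $\bbQ_q$ lies in $I_v$ and $I_q$ is abelian modulo it. The cocycle relation then gives $\widetilde c(x)=\widetilde c(c_0xc_0^{-1})=-\widetilde c(x)$, so $\widetilde c|_{I_v}=0$ with no use of $\Tame(0)$. For $q=2$, $\Tame(0)$ holds automatically.

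\emph{The prime $p$.} Your identification of the local condition with the unramified classes is the accurate statement. The paper asserts $H^1_f(\bbQ_p,K_\frakP(\chi_E))=0$ from the sequence $0\to H^0\to D_{\cris}^{\varphi=1}\to D_{\dR}/\Fil^0\to H^1_f\to0$. That is really the Bloch--Kato sequence for $H^1_e$. When $p$ splits in $E$, $\chi_E$ is trivial on $\Gal_{\bbQ_p}$, and $H^1_f(\bbQ_p,K_\frakP)=H^1_{\ur}(\bbQ_p,K_\frakP)\neq0$. This does not affect the paper's conclusion, because its ray-class argument only uses that $\widetilde c$ is unramified above $p$, which is exactly what you prove.
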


\begin{proof}
  Since \(p\ne 2\),
  the restriction map gives an injection
  \[
    H^1(\bbQ,\overline{\chi}_E)\hookrightarrow H^1(E,\kappa_\frakP)=\Hom_{\cont}(G_E,\kappa_\frakP).
  \]

  We first record the local condition at \(p\).
  We put \(T=\calO_{\frakP}(\chi_E)\) and \(V=T\otimes_{\calO_{\frakP}}K_{\frakP}\).
  By the Bloch--Kato fundamental exact sequence,
  \[
    0 \rightarrow H^0(\bbQ_p,V) \rightarrow D_{\cris}(V)^{\varphi=1} \rightarrow D_{\dR}(V)/\Fil^0D_{\dR}(V) \rightarrow
    H^1_f(\bbQ_p,V) \rightarrow 0
  \]
  for crystalline representations \(V\) (see \cite[Corollary~3.8]{BlochKato1990Tamagawa}).
  Since \(D_{\dR}(V)/\Fil^0D_{\dR}(V)=0\),
  it follows that
  \[
    H^1_f(\bbQ_p,V)=0.
  \]
  Thus the residual local condition induced from \(T\) is also zero:
  \[
    H^1_f(\bbQ_p,\overline{\chi}_E)=0.
  \]

  Take a class \(c\in H^1_{\calF_E}(\bbQ,\overline{\chi}_E)\).
  We denote by \(\widetilde c\) the image of \(c\) in \(H^1(E,\kappa_\frakP)\).
  By the local conditions defining \(\calF_E\),
  the character \(\widetilde c\) is unramified outside the primes of \(E\) above \(D_E\),
  and is trivial at the primes of \(E\) above \(p\).

  Hence,
  by global class field theory,
  \(\widetilde c\) factors through a \(p\)-quotient of a ray class group \(\Cl_{\frakm}(E)\),
  where the conductor \(\frakm\) is supported only at primes above \(D_E\).
  We show that this ray class group has no non-trivial \(p\)-quotient.

  For such a modulus \(\frakm\),
  the ray class exact sequence gives
  \[
    \calO_E^\times \rightarrow (\calO_E/\frakm)^\times \rightarrow \Cl_{\frakm}(E) \rightarrow \Cl(E) \rightarrow 1.
  \]
  Thus any \(p\)-primary contribution to \(\Cl_{\frakm}(E)\) must come either from the class group \(\Cl(E)\) or from \((\calO_E/\frakm)^\times\).

  It remains to examine the local factors of \((\calO_E/\frakm)^\times\).
  Fix a prime \(w\) of \(E\) above a rational prime \(q\mid D_E\),
  and write \(n=\ord_w(\frakm)\).
  If \(n=0\),
  this prime contributes nothing.
  Assume \(n\geq1\).
  Since \(q\ne p\),
  the kernel of the reduction map
  \[
    (\calO_E/w^n)^\times\rightarrow k_w^\times
  \]
  is a \(q\)-group.
  Hence the \(p\)-primary part of the local factor \((\calO_E/w^n)^\times\) injects into \(k_w^\times\).
  Since \(q\) is ramified in \(E/\bbQ\),
  the residue field is \(k_w=\bbF_q\),
  and so \(\#k_w^\times=q-1\).

  The condition \(p\nmid h_E\) excludes the class-group contribution,
  while \(\Tame(0)\) gives \(q\not\equiv1\pmod{p}\) for every \(q\mid D_E\),
  excluding all local contributions.
  Therefore \(\Cl_{\frakm}(E)\) has trivial \(p\)-primary part.
  It follows that \(\widetilde c=0\),
  and the injectivity of \(H^1(\bbQ,\overline{\chi}_E)\hookrightarrow H^1(E,\kappa_\frakP)\) gives \(c=0\).
  This proves the assertion.
\end{proof}
\begin{rem}
  Put \(w_E=\#\calO_E^\times\).
  The class number formula for the imaginary quadratic field \(E\) gives
  \[
    L(0,\chi_E)=\frac{2h_E}{w_E}.
  \]
  If \(p\ne2\) and \(p\nmid D_E\),
  then \(p\nmid w_E\).
  Hence,
  under these assumptions,
  the condition \(p\nmid h_E\) is equivalent to \(\frakP\nmid L(0,\chi_E)\).
\end{rem}

\begin{cor}
  \label{cor:bc-plus-rho-selmer}
  Let \(V\) be the underlying \(\kappa_\frakP\)-vector space of \(\overline\rho_f\),
  and put \(W=V\otimes\overline\chi_E\) and \(\lambda=\det(V)\,\overline\chi_E\).
  Assume that \(p>2\),
  \(p\nmid h_E\),
  and \(\Tame(0)\) holds.
  Then
  \[
    H^1_{\calF_E} \left( \bbQ, \Hom_{\kappa_\frakP}(W,V) \right)^{+} =0.
  \]
\end{cor}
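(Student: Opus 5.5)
The plan is to decompose the coefficient module \(\Hom_{\kappa_\frakP}(W,V)\) under the involution \(\tau_\bbQ\) of Definition~\ref{dfn:bc-extension-sign}. The aim is to show that its \(+1\)-eigenspace is exactly the one-dimensional piece \(\overline\chi_E\), and then to invoke Lemma~\ref{lem:chiE-selmer}.

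First I fix the identification \(W\simeq D_{\lambda,\bbQ}(V)=V^\vee\otimes\lambda\) needed to define \(\tau_\bbQ\). Since \(\dim V=2\), the alternating pairing \(V\times V\to\det V\) gives \(V^\vee\simeq V\otimes(\det V)^{-1}\). Hence \(V^\vee\otimes\det(V)\overline\chi_E\simeq V\otimes\overline\chi_E=W\). Through this identification,
\[
  \Hom_{\kappa_\frakP}(W,V)\simeq V\otimes W^\vee\simeq V\otimes V\otimes\lambda^{-1},
\]
and dualizing extensions corresponds to the swap of the two tensor factors, twisted by the sign of the pairing used to identify \(W\) with \(D_{\lambda,\bbQ}(V)\). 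Because \(p>2\), we have the \(\Gal_\bbQ\)-stable splitting
\[
  V\otimes V\otimes\lambda^{-1}=\bigl(\Sym^2V\otimes\lambda^{-1}\bigr)\oplus\bigl(\wedge^2V\otimes\lambda^{-1}\bigr)
  \simeq\bigl(\mathrm{ad}^0V\otimes\overline\chi_E\bigr)\oplus\overline\chi_E .
\]
Here I use \(\wedge^2V\otimes\lambda^{-1}=\det V\cdot\det V^{-1}\overline\chi_E=\overline\chi_E\). The involution acts by opposite scalars \(\pm1\) on the two summands.

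The key step, and the main obstacle, is checking which summand carries BC sign \(+\). I would do this by tracking Definition~\ref{dfn:self-duality-sign} for the extension \(0\to V\to X\to W\to0\). The matrix \(B\) realizing \(W\simeq V^\vee\otimes\lambda\) is the alternating form (\({}^tB=-B\)), since \(\overline\rho_f\) is symplectic. Transposing an extension class then introduces this extra sign \(-1\), so the symmetric-square part acquires sign \(-\) and the \(\wedge^2\) part acquires sign \(+\). This agrees with the usual Bella\"iche--Chenevier convention, under which for a symplectic \(V\) the \(+\)-part of \(\Hom(D(V),V)\) is the alternating part. It is also consistent with the hypotheses of the corollary, which only involve \(\overline\chi_E\). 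Thus
\[
  H^1\bigl(\bbQ,\Hom_{\kappa_\frakP}(W,V)\bigr)^{+}=H^1(\bbQ,\overline\chi_E),
\]
viewed as a direct summand.

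Next I check that the Selmer structure \(\calF_E\) respects this decomposition. Away from \(p\) the local conditions are the unramified condition or the full \(H^1\), and both are functorial and additive for direct sums. At \(p\), the residual finite condition is defined through a lattice \(T=T_f\otimes T_f\otimes\tilde\lambda^{-1}\). Since \(p>2\), the splitting \(\Sym^2\oplus\wedge^2\) already holds for \(T\) over \(\calO_\frakP\), and \(H^1_f\) of a direct sum of crystalline lattices is the direct sum. So the \(p\)-condition also splits, and the \(\wedge^2\)-component of \(H^1_f(\bbQ_p,-)\) is \(H^1_f(\bbQ_p,\overline\chi_E)\) defined via \(\calO_\frakP(\chi_E)\), exactly as in the proof of Lemma~\ref{lem:chiE-selmer}. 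Consequently
\[
  H^1_{\calF_E}\bigl(\bbQ,\Hom_{\kappa_\frakP}(W,V)\bigr)^{+}\simeq H^1_{\calF_E}(\bbQ,\overline\chi_E).
\]
Under \(p>2\), \(p\nmid h_E\) and \(\Tame(0)\), this vanishes by Lemma~\ref{lem:chiE-selmer}.
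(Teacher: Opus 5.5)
Your proposal is correct and is essentially the paper's proof: under \(V^\vee\simeq V\otimes\det(V)^{-1}\), your splitting \(V\otimes V\otimes\lambda^{-1}=(\Sym^2V\otimes\lambda^{-1})\oplus\overline\chi_E\) is exactly the paper's decomposition of \(\End(V)\otimes\overline\chi_E\) into trace-zero and scalar endomorphisms (the identity becomes an alternating tensor), the \(+\)-part is \(\overline\chi_E\) in both, and Lemma~\ref{lem:chiE-selmer} finishes; your check that the local condition at \(p\) splits fills in what the paper's ``Consequently'' leaves implicit. One correction to your sign bookkeeping: take \(D(W)\simeq V\) to be the transpose of \(W\simeq D(V)\); then the involution on \(V\otimes V\otimes\lambda^{-1}\) is \(-\mathrm{swap}\) for a pairing of either symmetry, because dualizing an extension sends its class \(c\) to \(-c^\vee\), and the alternation of \(B\) only enters when you translate back to \(\End(V)\), where it turns \(-A^\dagger\) into the paper's \(A\mapsto A^\dagger\), so your conclusion is right but not for the reason you state.
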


\begin{proof}
  Let \(\langle\ ,\ \rangle:V\times V\rightarrow\det(V)\) be the determinant pairing.
  Since \(W=V\otimes\overline\chi_E\) and \(\lambda=\det(V)\overline\chi_E\),
  it identifies
  \[
    \Hom_{\kappa_\frakP}(W,V) \simeq \End_{\kappa_\frakP}(V)\otimes\overline\chi_E.
  \]
  Thus a coefficient is written as \(A\otimes\overline\chi_E\),
  with \(A\in\End_{\kappa_\frakP}(V)\).

  Let \(A^\dagger\in\End_{\kappa_\frakP}(V)\) be the adjoint of \(A\) for the determinant pairing.
  The involution on extensions acts on the coefficient module by
  \[
    A\otimes\overline\chi_E \mapsto A^\dagger\otimes\overline\chi_E.
  \]
  Since \(\dim V=2\),
  \[
    A^\dagger=\tr(A)\id_V-A.
  \]
  Thus \(A=A^\dagger\) if and only if \(2A=\tr(A)\id_V\),
  and since \(p>2\),
  this holds precisely when \(A\) is scalar.
  Consequently,
  \[
    H^1_{\calF_E} \left( \bbQ, \Hom_{\kappa_\frakP}(W,V) \right)^{+} \simeq H^1_{\calF_E}(\bbQ,\overline\chi_E).
  \]
  The result follows from Lemma~\ref{lem:chiE-selmer}.
\end{proof}

\subsection{Selmer groups for two-dimensional representations}
\label{subsec:two-dim}
Next,
we exclude extensions between \(\overline{\rho}_f\) and the two one-dimensional constituents.

\begin{prop}
  \label{prop:kato-input}
  Let \(h\in S_{2\nu}(\SL_2(\bbZ))\) be a primitive elliptic cusp form.
  Let \(K\) be a number field containing the Hecke field of \(h\otimes\chi_E\),
  and let \(\frakP\) be a prime of \(K\) above \(p\).
  We denote by \(V_{{h\otimes\chi_E}}\) the two-dimensional \(K_\frakP\)-linear \(p\)-adic Galois representation attached to \(h\otimes\chi_E\),
  and choose a \(\Gal_\bbQ\)-stable \(\calO_\frakP\)-lattice \(T_{h\otimes\chi_E}\subset V_{h\otimes\chi_E}\).
  For \(m\in\bbZ\),
  put
  \begin{align*}
    T_{h\otimes\chi_E}(m) & = T_{h\otimes\chi_E}\otimes_{\calO_\frakP}\calO_\frakP(\omega^m), \\
    \bar T                & = T_{h\otimes\chi_E}(m)/\varpi_\frakP T_{h\otimes\chi_E}(m).
  \end{align*}
  Assume that \(p\nmid D_E\),
  \(p>2\nu\),
  and \(1\leq m\leq 2\nu-1\) with \(m\ne\nu\).
  Assume that \(\bar\rho_{h\otimes\chi_E}\) is absolutely irreducible and that,
  after a suitable choice of a lattice,
  the image of \(\Gal_{\bbQ(\zeta_{p^\infty})}\) under \(\rho_{h\otimes\chi_E}\) contains \(\SL_2(\calO_\frakP)\).
  If \(\frakP\nmid L^{\alg}(2\nu-m,h\otimes\chi_E)\),
  then
  \[
    H^1_{\calF_E}(\bbQ,\bar T)=0.
  \]
\end{prop}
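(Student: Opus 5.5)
We compare the \(S_E\)-imprimitive residual Selmer group with the Bloch--Kato Selmer group of the divisible module, and then use Kato's Euler system to show the latter vanishes.

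Put \(V=V_{h\otimes\chi_E}(m)\), \(T=T_{h\otimes\chi_E}(m)\) and \(A=V/T\). Since \(\bar T\) is absolutely irreducible of dimension \(2\), we have \(H^0(\bbQ,\bar T)=0\). Hence \(H^1(\bbQ,\bar T)\to H^1(\bbQ,A)[\varpi_\frakP]\) is an isomorphism, and the plan is to prove the following inclusion and vanishing:
\[
  H^1_{\calF_E}(\bbQ,\bar T)\hookrightarrow H^1_{\calF_E}(\bbQ,A)[\varpi_\frakP],
  \qquad
  H^1_{\calF_E}(\bbQ,A)=0.
\]

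The local conditions in the inclusion are matched place by place.
\begin{enumerate}
  \item At \(q\nmid pD_E\), the representation \(T\) is unramified. The unramified condition on \(\bar T\) maps into the unramified classes of \(A\), which coincide with \(H^1_f\) there up to the finite Tamagawa defect.
  \item At \(q=p\), the residual condition is by definition the image of \(H^1_f(\bbQ_p,T)\). Its image in \(H^1(\bbQ_p,A)\) lands in \(H^1_f(\bbQ_p,A)\), since the map \(\bar T\to A\) is compatible with \(T\to V\).
  \item At \(q\mid D_E\), no condition is imposed, so we define \(H^1_{\calF_E}(\bbQ_q,A)=H^1(\bbQ_q,A)\).
\end{enumerate}
Thus it suffices to show \(H^1_{\calF_E}(\bbQ,A)=0\), the Selmer group with relaxed conditions at \(q\mid D_E\).

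To pass from the relaxed group to the Bloch--Kato group, we use the exact sequence
\[
  0\to H^1_f(\bbQ,A)\to H^1_{\calF_E}(\bbQ,A)\to\bigoplus_{q\mid D_E}H^1(\bbQ_q,A)/H^1_f(\bbQ_q,A).
\]
At \(q\mid D_E\) the form \(h\otimes\chi_E\) has conductor divisible by \(q\). The local representation \(V|_{\Gal_{\bbQ_q}}\) is then a ramified principal series, namely a sum of two characters twisted by the ramified \(\chi_{E,q}\); alternatively it is irreducible if \(\chi_{E,q}\) interacts non-trivially. In either case \(V^{I_q}=0\). By local Tate duality and the Euler characteristic formula, \(H^1(\bbQ_q,A)\) then has order bounded by \(\#H^0(\bbQ_q,A)\cdot\#H^2(\bbQ_q,A)\). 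Moreover \(H^2(\bbQ_q,A)\) is dual to \(H^0(\bbQ_q,T^\vee(1))\), which is torsion of the same nature. We must check that these invariants vanish mod \(\varpi_\frakP\). Because \(V^{I_q}=0\), we have \(H^1_f(\bbQ_q,V)=0\), so \(H^1_f(\bbQ_q,A)\) is finite; but the full \(H^1(\bbQ_q,A)\) may be nonzero only via \(H^0(I_q,A)\). To kill it we will show \(\bar T^{I_q}=0\). Residually, \(\bar\rho_{h\otimes\chi_E}|_{I_q}=\bar\rho_h|_{I_q}\otimes\bar\chi_E\) and \(\bar\rho_h\) is unramified at \(q\), so the inertia acts by the non-trivial quadratic character. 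Since \(p\ne2\), this gives \(\bar T^{I_q}=0\), and dually \((\bar T^\vee(1))^{I_q}=0\). Consequently \(H^0(\bbQ_q,A)=0\) and \(H^2(\bbQ_q,A)=0\), so \(H^1(\bbQ_q,A)=0\) by the Euler characteristic formula (\(q\ne p\)). Hence the relaxed and strict conditions coincide at \(q\mid D_E\), and \(H^1_{\calF_E}(\bbQ,A)=H^1_f(\bbQ,A)\).

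For the Bloch--Kato group we apply Kato's Euler system bound \cite{Kato2004HodgeZeta} to \(h\otimes\chi_E\), a newform of level \(D_E^2\) prime to \(p\) with \(p>2\nu\). The hypotheses are the big image \(\SL_2(\calO_\frakP)\subset\) image of \(\Gal_{\bbQ(\zeta_{p^\infty})}\), the Fontaine--Laffaille range \(p>2\nu\), and the non-central twist \(m\ne\nu\), which puts us in the non-critical-center case where Kato's theorem gives \(\#H^1_f(\bbQ,A)\le\#\calO_\frakP/L^{\alg}(2\nu-m,h\otimes\chi_E)\) with the Kato period normalization. Here \(2\nu-m\) is the critical value dual to the twist, and the \(\chi_E\)-twist and Gauss sum are \(p\)-units since \(p\nmid D_E\). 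The assumption \(\frakP\nmid L^{\alg}(2\nu-m,h\otimes\chi_E)\) then gives \(H^1_f(\bbQ,A)=0\), which finishes the argument.

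The main obstacle is this last step: matching the normalization of \(L^{\alg}\) via Kato periods for the twisted form, and handling the Tamagawa/local factors at \(q\mid D_E\) in Kato's bound. The strategy is to cite the form of Kato's result used in \cite[\S~9]{Atobe2023HarderII}, which is stated for level one, and extend it to the twist by \(\chi_E\) using the vanishing \(H^1(\bbQ_q,A)=0\) proved above to make the extra local factors trivial.
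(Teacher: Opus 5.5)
Your proposal is correct and follows essentially the same route as the paper: the relaxed condition at each \(q\mid D_E\) is vacuous because inertia acts on \(T\) through the non-trivial quadratic character \(\chi_E|_{I_q}\), which stays non-trivial mod \(\frakP\) since \(p>2\); the residual Selmer group injects into \(H^1_f(\bbQ,A)[\varpi_\frakP]\) because \(H^0(\bbQ,A)=0\); and Kato's Euler system bound together with \(\frakP\nmid L^{\alg}(2\nu-m,h\otimes\chi_E)\) finishes the proof. The differences are only in packaging: the paper does the comparison at \(q\mid D_E\) residually, via local Tate duality and \((\bar T^\vee(1))^{I_q}=0\), whereas you prove \(H^1(\bbQ_q,A)=0\) for the divisible module by the Euler characteristic formula at finite levels, and at \(q\nmid pD_E\) the ``Tamagawa defect'' you mention is in fact zero because \(T\) is unramified there.
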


\begin{proof}
  Put \(\Sigma_E=\{q:q\mid D_E\}\).
  Since \(p\nmid D_E\),
  every prime \(q\in\Sigma_E\) satisfies \(q\ne p\).

  By the definition of the \(S_E\)-imprimitive Selmer structure,
  we have an exact sequence
  \[
    0 \rightarrow H^1_{f}(\bbQ,\bar T) \rightarrow H^1_{\calF_E}(\bbQ,\bar T) \rightarrow \bigoplus_{q\in\Sigma_E}
    H^1(\bbQ_q,\bar T)/H^1_f(\bbQ_q,\bar T).
  \]
  Suppose that \(H^1_{\calF_E}(\bbQ,\bar T)\ne0\).

  We first show that \(H^1_{f}(\bbQ,\bar T)\ne0\).
  Assume,
  to the contrary,
  that \(H^1_{f}(\bbQ,\bar T)=0\).
  Then the exact sequence above shows that there exists \(q\in\Sigma_E\) such that
  \[
    H^1(\bbQ_q,\bar T)/H^1_f(\bbQ_q,\bar T)\ne0.
  \]
  By local Tate duality,
  and by the orthogonality of the unramified local conditions under this pairing,
  we have a perfect duality
  \[
    \left( H^1(\bbQ_q,\bar T)/H^1_f(\bbQ_q,\bar T) \right)^\vee \simeq H^1_f(\bbQ_q,\bar T^\vee(1)).
  \]
  Here \(\bar T^\vee=\Hom_{\kappa_\frakP}(\bar T,\kappa_\frakP)\).
  Let \(I_q\subset\Gal_{\bbQ_q}\) be the inertia subgroup.
  If we put \(M=(\bar T^\vee(1))^{I_q}\),
  we have
  \[
    H^1_f(\bbQ_q,\bar T^\vee(1)) = M/(\Frob_q-1)M.
  \]
  The preceding non-vanishing therefore implies \(M\ne0\).

  Since \(h\) has level \(1\) and \(V_{h\otimes\chi_E}\simeq V_h\otimes\chi_E\),
  the inertia action on \(V_{h\otimes\chi_E}\) at \(q\) is given by the character \(\chi_E|_{I_q}\).
  Since \(q\mid D_E\),
  this character is non-trivial.
  Since \(p>2\),
  its reduction modulo \(\frakP\) is still non-trivial.
  Thus
  \[
    (\bar T^\vee(1))^{I_q}=0.
  \]
  This contradicts \(M\ne0\).
  Therefore we obtain
  \[
    H^1_{f}(\bbQ,\bar T)\ne0.
  \]

  Put \(A_{h\otimes\chi_E}(m)=V_{h\otimes\chi_E}(m)/T_{h\otimes\chi_E}(m)\).
  Since \(\bar T\) is absolutely irreducible and non-trivial,
  we have \(H^0(\bbQ,A_{h\otimes\chi_E}(m))=0\).
  Therefore the exact sequence
  \[
    0\rightarrow \bar T \rightarrow A_{h\otimes\chi_E}(m) \xrightarrow{\varpi_\frakP} A_{h\otimes\chi_E}(m) \rightarrow
    0
  \]
  gives an injection
  \[
    H^1_f(\bbQ,\bar T) \hookrightarrow H^1_f(\bbQ,A_{h\otimes\chi_E}(m))[\varpi_\frakP].
  \]
  Hence \(H^1_f(\bbQ,A_{h\otimes\chi_E}(m))[\varpi_\frakP]\ne0\).

  By the argument based on \cite[Propositions~14.16 and~14.21]{Kato2004HodgeZeta},
  this implies
  \[
    \frakP\mid L^{\alg}(2\nu-m,h\otimes\chi_E).
  \]
  Taking the contrapositive gives the desired assertion.
\end{proof}

\begin{cor}
  \label{cor:offdiag-Q}
  Let \(k\ge 4\) and \(j\ge 2\) be integers with \(j\) even.
  Let \(f\in S_{2k+j-2}(\SL_2(\bbZ))\) be a primitive elliptic cusp form.
  Put \(\theta_1=\overline{\omega}^{-k+2}\) and \(\theta_2=\overline{\omega}^{-k-j+1}\).
  Assume that the following conditions hold:
  \begin{enumerate}
    \item \(p\nmid D_E\) and \(p>2k+j-2\).

    \item After a suitable choice of a lattice,
          the image of
          \(\Gal_{\bbQ(\zeta_{p^\infty})}\) under the representation attached to \(f\otimes\chi_E\) contains \(\SL_2(\calO_\frakP)\).

    \item The prime ideal \(\frakP\) divides neither \(L^{\alg}(k+j,f\otimes\chi_E)\) nor \(L^{\alg}(k+j-1,f\otimes\chi_E)\).
  \end{enumerate}
  Then,
  for each \(i=1,2\) and each ordered pair
  \((\eta,\eta')\in\left\{\left(\theta_i\overline{\chi}_E,\overline{\rho}_f\right),\left(\overline{\rho}_f,\theta_i\overline{\chi}_E\right)\right\}\),
  we have
  \[
    H^1_{\calF_E}\left(\bbQ,\Hom(\eta,\eta')\right)=0.
  \]
\end{cor}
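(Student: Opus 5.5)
The plan is to reduce each of the four Selmer groups to the one treated in Proposition~\ref{prop:kato-input}, with \(h=f\), \(\nu=k+j/2-1\) (so \(2\nu=2k+j-2\)), and suitable twists \(m\). The coefficient modules are
\[
  \Hom(\theta_i\overline\chi_E,\overline\rho_f)\simeq\overline\rho_f\otimes\overline\chi_E\otimes\theta_i^{-1}
\]
and
\[
  \Hom(\overline\rho_f,\theta_i\overline\chi_E)\simeq\overline\rho_f^\vee\otimes\overline\chi_E\otimes\theta_i .
\]
We have \(\overline\rho_f^\vee\simeq\overline\rho_f\otimes\det\overline\rho_f^{-1}\), and, with the geometric Frobenius and \(\HT(\rho_f)=\{-(2k+j-3),0\}\), \(\det\overline\rho_f=\overline\omega^{-(2k+j-3)}\). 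So each module has the form \(\overline\rho_{f\otimes\chi_E}\otimes\overline\omega^{m}=\bar T\) for \(T=T_{f\otimes\chi_E}(m)\). The four exponents are:
\begin{itemize}
  \item \(m=k-2\) for \((\theta_1\overline\chi_E,\overline\rho_f)\);
  \item \(m=k+j-1\) for \((\theta_2\overline\chi_E,\overline\rho_f)\);
  \item \(m=(2k+j-3)-(k-2)=k+j-1\) for \((\overline\rho_f,\theta_1\overline\chi_E)\);
  \item \(m=(2k+j-3)-(k+j-1)=k-2\) for \((\overline\rho_f,\theta_2\overline\chi_E)\).
\end{itemize}
Thus only \(m\in\{k-2,\,k+j-1\}\) occur. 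Both satisfy \(1\le m\le 2\nu-1=2k+j-3\), since \(k\ge4\). Neither equals \(\nu=k+j/2-1\), because \(j\ge2\) gives \(k-2<\nu<k+j-1\).

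Next I check the hypotheses of Proposition~\ref{prop:kato-input}. The conditions \(p\nmid D_E\) and \(p>2\nu=2k+j-2\) are hypothesis (1). The large-image condition is hypothesis (2). Absolute irreducibility of \(\bar\rho_{f\otimes\chi_E}\) follows from (2): a reduction whose image contains \(\SL_2(\kappa_\frakP)\) is absolutely irreducible. The required non-divisibility is \(\frakP\nmid L^{\alg}(2\nu-m,f\otimes\chi_E)\). Here \(2\nu-m=k+j\) when \(m=k-2\), and \(2\nu-m=k-1\) when \(m=k+j-1\).

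The main obstacle is the second case, since hypothesis (3) concerns \(k+j-1\), not \(k-1\). I would handle it with the functional equation of \(f\otimes\chi_E\), \(s\leftrightarrow 2\nu-s\), which relates \(L(k-1,f\otimes\chi_E)\) to \(L(k+j-1,f\otimes\chi_E)\). The gamma factors and the conductor \(D_E^2\) are \(\frakP\)-units because \(p>2k+j-2\) and \(p\nmid D_E\), and the root number is a unit. The Gauss-sum and period normalizations must be matched: the parity \(\alpha(s,\chi_E)\) is the same at \(s\) and \(2\nu-s\). The outcome is that \(L^{\alg}(k-1,f\otimes\chi_E)\) and \(L^{\alg}(k+j-1,f\otimes\chi_E)\) agree up to a \(\frakP\)-unit.

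Alternatively, I would use the local Tate duality \(\bar T^\vee(1)\simeq\bar T_{f\otimes\chi_E}(2\nu-m)\) together with a Greenberg--Wiles/Poitou--Tate comparison to swap \(m\) with \(2\nu-m\). Either way, hypothesis (3) yields the needed non-divisibility for \(m=k+j-1\). For \(m=k-2\) it gives \(L^{\alg}(k+j)\) directly.

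Proposition~\ref{prop:kato-input} then gives \(H^1_{\calF_E}(\bbQ,\bar T)=0\) in all four cases. This vanishing is independent of the chosen lattice, because \(\bar T\) is absolutely irreducible.
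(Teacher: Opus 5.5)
Your proposal is correct and follows the paper's proof. You identify the four coefficient modules as \(\overline\rho_{f\otimes\chi_E}\otimes\overline\omega^{m}\) with \(m\in\{k-2,k+j-1\}\), apply Proposition~\ref{prop:kato-input} with \(h=f\), and convert the case \(2\nu-m=k-1\) to hypothesis (3) via the functional equation, whose factor \((-1)^kD_E^{-j}(k-2)!/(k+j-2)!\) is a \(\frakP\)-unit. Your checks that \(1\le m\le 2\nu-1\) and \(m\ne\nu\), that \(\overline\rho_{f\otimes\chi_E}\) is absolutely irreducible, and that the residual module does not depend on the lattice fill in points the paper leaves implicit, and the Poitou--Tate alternative is not needed.
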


\begin{proof}
  We have
  \[
    \Hom\left(\theta_1\cdot\overline{\chi}_E,\overline{\rho}_f\right)
    =
    \overline{\rho}_{f\otimes\chi_E}\cdot\overline{\omega}^{k-2},
  \]
  and
  \[
    \Hom\left(\theta_2\cdot\overline{\chi}_E,\overline{\rho}_f\right)
    =
    \overline{\rho}_{f\otimes\chi_E}\cdot\overline{\omega}^{k+j-1}.
  \]
  For the opposite extension directions,
  we use the self-duality
  \[
    \overline{\rho}_{f\otimes\chi_E}^{\vee}
    \simeq
    \overline{\rho}_{f\otimes\chi_E}\cdot
    \overline{\omega}^{2k+j-3}.
  \]
  Hence
  \[
    \Hom\left(\overline{\rho}_f,\theta_1\cdot\overline{\chi}_E\right)
    =
    \overline{\rho}_{f\otimes\chi_E}\cdot\overline{\omega}^{k+j-1},
  \]
  and
  \[
    \Hom\left(\overline{\rho}_f,\theta_2\cdot\overline{\chi}_E\right)
    =
    \overline{\rho}_{f\otimes\chi_E}\cdot\overline{\omega}^{k-2}.
  \]

  Apply Proposition~\ref{prop:kato-input} to \(h=f\),
  with \(m=k-2\) and \(m=k+j-1\).
  Since \(2\nu-m=k+j\) and \(k-1\),
  respectively,
  assumption~\((3)\) and the functional equation
  \[
    \frac{L^{\alg}(k+j-1,f\otimes\chi_E)} {L^{\alg}(k-1,f\otimes\chi_E)} = (-1)^kD_E^{-j}\frac{(k-2)!}{(k+j-2)!},
  \]
  whose right-hand side is a \(\frakP\)-unit,
  give the required vanishing for all four coefficient modules.
\end{proof}

\section{Proof of the main theorem}
\label{sec:main-thm}

Let \(\GU_{2,2}\) be the algebraic group over \(\bbQ\) defined by
\[
  \GU_{2,2}(R) = \left\{ g\in\GL_4(E\otimes_\bbQ R) \;\middle|\; \adj{g}J_{2,2}g=\nu(g)J_{2,2} \text{ for some
  }\nu(g)\in R^\times \right\}
\]
for every \(\bbQ\)-algebra \(R\).
The homomorphism \(\nu:\GU_{2,2}\rightarrow\bbG_m\) is the similitude character,
and its kernel is \(\U_{2,2}\).
Put
\[
  \GU_{2,2}^{+}(\bbA_\bbQ) = \{g\in\GU_{2,2}(\bbA_\bbQ)\mid \nu(g)_\infty>0\}.
\]
For \(t\in\bbG_m\),
put
\[
  s(t) = \begin{pmatrix}
    tI_2 & 0   \\
    0    & I_2
  \end{pmatrix}.
\]
Then \(\nu(s(t))=t\).
For \(g\in\GU_{2,2}^{+}(\bbA_\bbQ)\),
put
\[
  \pr_{\U}(g) =s(\nu(g))^{-1}g\in\U_{2,2}(\bbA_\bbQ).
\]
By \cite[Lemma~5.1]{Takeda2026Congruences},
restriction to \(\U_{2,2}(\bbA_\bbQ)\) identifies level-one cusp forms on \(\GU_{2,2}^{+}(\bbA_\bbQ)\) with trivial similitude character with \(\calS_{\tau_{k,j}}(\U_{2,2})\).
Under the inverse of this restriction map,
a form \(G\in\calS_{\tau_{k,j}}(\U_{2,2})\) is extended by
\[
  \widetilde G(g) =G(\pr_{\U}(g)).
\]
Since
\[
  \GU_{2,2}(\bbA_\bbQ) = \GU_{2,2}(\bbQ)\GU_{2,2}^{+}(\bbA_\bbQ),
\]
we regard \(\widetilde G\) as a cuspidal automorphic form on \(\GU_{2,2}(\bbA_\bbQ)\).

Let \(\pi_G\) be the automorphic representation generated by \(G\).
Choose an irreducible cuspidal constituent \(\widetilde\pi_G\) of the automorphic representation generated by
\(\widetilde G\) such that the restriction of \(\widetilde\pi_G\) to \(\U_{2,2}(\bbA_\bbQ)\) contains \(\pi_G\).
Then \(\widetilde\pi_{G,\ell}\) is unramified for every \(\ell\nmid D_E\).
In particular,
since \(p\nmid D_E\),
the local representation \(\widetilde\pi_{G,p}\) is unramified.
We may therefore apply the construction of Skinner \cite[Theorem~B]{Skinner2012Galois} to \(\widetilde\pi_G\).

Fix an isomorphism \(\iota_p:\overline{\bbQ}_p\simeq\bbC\).
By \cite[Theorem~B]{Skinner2012Galois},
there exists a continuous semisimple representation
\[
  R_G=R_{G,\iota_p}: \Gal_E\rightarrow \GL_4(\overline{\bbQ}_p)
\]
with the following properties.
Let \(\Sigma(\widetilde{\pi}_G)\) be the finite set of rational primes \(\ell\) such that either \(\ell\mid D_E\) or \(\widetilde{\pi}_{G,\ell}\) is ramified.
Then \(R_G\) is unramified at all finite places of \(E\) not lying above \(\Sigma(\widetilde{\pi}_G)\cup\{p\}\).
In the level-one situation considered here,
\(R_G\) is unramified outside the places of \(E\) above \(pD_E\).

Moreover,
if \(w\nmid pD_E\) is a finite place of \(E\) above a rational prime \(\ell\),
then
\[
  \iota_p\det\left(1-R_G(\Frob_w)X\right) = L\left( X,\BC_w(\pi_{G,\ell})\otimes|\det|_w^{-3/2} \right)^{-1}.
\]
Here \(\BC_w(\pi_{G,\ell})\) denotes the local base change of \(\pi_{G,\ell}\) to \(\GL_4(E_w)\),
and \(\Frob_w\) denotes geometric Frobenius.

Finally,
assume that \(p\nmid D_E\) and that \(\widetilde{\pi}_{G,p}\) is unramified.
Then,
for every place \(v\mid p\) of \(E\),
the representation \(R_G|_{\Gal_{E_v}}\) is crystalline.

For a representation \(R\) of \(\Gal_E\),
we write \(R^c\) for the conjugate representation
\[
  R^c(\sigma)=R(c\sigma c^{-1}),
  \qquad \sigma\in\Gal_E,
\]
where \(c\in\Gal_\bbQ\setminus\Gal_E\) is a lift of the non-trivial element of \(\Gal(E/\bbQ)\).
We define
\[
  R_G^\natural = R_G\cdot\omega_E^{3-k-j/2}.
\]
Then we can easily check that
\[
  (R_G^\natural)^\vee \simeq (R_G^\natural)^c\cdot\omega_E^{2k+j-3}.
\]
\begin{lem}
  \label{lem:RG-crys-E}
  Assume that \(p\nmid D_E\) and \(p>2k+j-2\).
  Then \(R_G^\natural\) is unramified outside the places of \(E\) above \(pD_E\),
  and is short crystalline at every place \(v\mid p\) of \(E\).
\end{lem}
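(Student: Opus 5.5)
The plan is to read both assertions off the properties of \(R_G\) from \cite[Theorem~B]{Skinner2012Galois} recalled above. Twisting by the global character \(\omega_E^{3-k-j/2}\) changes neither property in an essential way. That character is unramified outside \(p\) and crystalline at every \(v\mid p\).

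\textbf{Unramifiedness.} I first record the level of \(\widetilde\pi_G\): it is unramified at every \(\ell\nmid D_E\), so \(\Sigma(\widetilde\pi_G)\) consists only of the primes dividing \(D_E\). Skinner's theorem then shows that \(R_G\) is unramified at every finite place of \(E\) not above \(pD_E\). Since \(\omega_E\) is unramified away from \(p\), the same holds for \(R_G^\natural\).

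\textbf{Crystallinity at \(p\).} Because \(p\nmid D_E\) and \(\widetilde\pi_{G,p}\) is unramified, \(R_G|_{\Gal_{E_v}}\) is crystalline for every \(v\mid p\). Any integral power of \(\omega_E\) is crystalline, and tensor products of crystalline representations are crystalline. Hence \(R_G^\natural|_{\Gal_{E_v}}\) is crystalline. The prime \(p\) is unramified in \(E\), so \(E_v/\bbQ_p\) is unramified and the Hodge--Tate weights are defined as in Definition~\ref{dfn:crys-rep} for each embedding.

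\textbf{Hodge--Tate weights.} This is the main step. I will determine \(\HT(R_G^\natural|_{\Gal_{E_v}})\) from the weight \(\tau_{k,j}\) via Skinner's compatibility at \(p\). The Hodge--Tate weights are read off from the infinitesimal character of \(\pi_{G,\infty}\), which is
\[
  \tfrac{2k+j-3}{2}(f_1-f_4)+\tfrac{j+1}{2}(f_2-f_3),
\]
as in the proof of Proposition~\ref{prop:spin-lift}. After the shift by \(\omega_E^{3-k-j/2}\), which corresponds to the \(|\det|^{-3/2}\) normalization together with the twist, I expect
\[
  \HT(R_G^\natural)=\{0,\,-(k-2),\,-(k+j-1),\,-(2k+j-3)\}
\]
for each embedding. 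This matches \(\rho_F\), and it is consistent with the conjugate self-duality \((R_G^\natural)^\vee\simeq (R_G^\natural)^c\omega_E^{2k+j-3}\), because the set is stable under \(h\mapsto -(2k+j-3)-h\). I will pin down the sign convention by checking this symmetry together with the requirement that \(0\) occur. If an overall shift is uncertain, the duality relation forces the weights to be symmetric about \(-(2k+j-3)/2\), and regularity then fixes them to the set above.

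\textbf{Conclusion.} Since \(p>2k+j-2\), we have \(2k+j-3\le p-2\), so all weights lie in \([-(p-2),0]\). By the remark after Definition~\ref{dfn:short-crys}, \(R_G^\natural\) is therefore short crystalline at every \(v\mid p\). The only delicate point is the normalization of the Hodge--Tate weights in Skinner's theorem, that is, the precise twist and the geometric versus arithmetic convention.
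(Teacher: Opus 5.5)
Your proposal is correct and follows the same route as the paper: unramifiedness and crystallinity come from Skinner's Theorem~B and survive the integral Tate twist by \(\omega_E^{3-k-j/2}\). Short crystallinity then follows because the Hodge--Tate weights \(\{0,-(k-2),-(k+j-1),-(2k+j-3)\}\) lie in \([-(p-2),0]\); the paper simply asserts these weights ``with our normalization.'' Your fallback argument is slightly loose, since symmetry plus regularity alone does not fix the set. What fixes it is the gaps \(k-2,\,j+1,\,k-2\) from the infinitesimal character together with the centre \(-(2k+j-3)/2\); when \(p\) splits you also need the equality of weights at \(v\) and \(v^c\), because the duality swaps these two places. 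This does not affect the main argument.
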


\begin{proof}
  By the construction of Skinner recalled above,
  \(R_G\) is unramified outside the places above \(pD_E\).
  Since the cyclotomic character is unramified outside the places above \(p\),
  the twist \(R_G^\natural=R_G\cdot\omega_E^{3-k-j/2}\) is also unramified outside the places above \(pD_E\).

  Let \(v\mid p\).
  Since \(p\nmid D_E\) and \(\widetilde{\pi}_{G,p}\) is unramified,
  The theorem of Skinner gives that \(R_G|_{\Gal_{E_v}}\) is crystalline.
  Tate twists preserve crystallinity,
  so \(R_G^\natural|_{\Gal_{E_v}}\) is crystalline.

  With our normalization,
  the Hodge--Tate weights of \(R_G^\natural|_{\Gal_{E_v}}\) are \(\{-2k-j+3,-k-j+1,-k+2,0\}\).
  Hence they are contained in \([-(p-2),0]\).
  By the sufficient criterion following Definition~\ref{dfn:short-crys},
  \(R_G^\natural|_{\Gal_{E_v}}\) is short crystalline.
\end{proof}
Let \(\overline{R}_G^{\natural,ss} \in \frakA_\frakP(\Gal_E)\) be the semisimplified residual representation of \(R_G^\natural\).

Let \(\ell\ne p\) be a rational prime which splits in \(E\),
and write \(\ell=ww^c\).
Assume that \(\ell\nmid D_E\).
After choosing \(w\),
the split local group is identified with \(\U_{2,2}(\bbQ_\ell)\simeq \GL_4(\bbQ_\ell)\).
Let \(s_w(G)\in\GL_4(\bbC)\) be the corresponding Satake parameter.
We define the normalized split \(L\)-polynomial by
\[
  P_w^\natural(X,G) = \det\left(1-\ell^{k+j/2-3}s_w(G)X\right).
\]
In terms of the local base change,
this is
\[
  P_w^\natural(X,G) = L\left( \ell^{k+j/2-3}X, \BC_w(\pi_{G,\ell})\otimes|\det|_w^{-3/2} \right)^{-1}.
\]
With the convention for the standard \(L\)-factor of \(\U_{2,2}\),
the rational \(L\)-polynomial at \(\ell\) factors as
\[
  L_\ell^\natural(X,G,\St) = P_w^\natural(X,G)\,P_{w^c}^\natural(X,G).
\]
After identifying coefficients by \(\iota_p\),
we have
\[
  \det\left(1-R_G^\natural(\Frob_w)X\right) = P_w^\natural(X,G).
\]

We put
\[
  \calB_{f,E} = \overline{\rho}_f^{ss}|_{\Gal_E} + \overline{\omega}_E^{-k+2} + \overline{\omega}_E^{-k-j+1} \in
  \frakA_\frakP(\Gal_E).
\]

\begin{prop}
  \label{prop:res-class-E}
  If \(G\equiv_{ev}[f]^2_{j+4}\pmod{\frakP}\),
  then
  \[
    \overline{R}_G^{\natural,ss}=\calB_{f,E}.
  \]
\end{prop}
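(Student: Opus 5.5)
The plan is the Brauer--Nesbitt/Chebotarev argument: compare characteristic polynomials of Frobenius on a density-one set of places of \(E\), then conclude equality in \(\frakA_\frakP(\Gal_E)\).

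Both \(\overline{R}_G^{\natural,ss}\) and \(\calB_{f,E}\) are semisimple four-dimensional \(\kappa_\frakP\)-representations of \(\Gal_E\), unramified outside the places above \(pD_E\). The former is unramified there by Lemma~\ref{lem:RG-crys-E}. For the latter, \(\rho_f\) and \(\omega\) are unramified outside \(p\). By Brauer--Nesbitt and Chebotarev, it therefore suffices to show
\[
  \det\left(1-\overline{R}_G^{\natural}(\Frob_w)X\right) \equiv \det\left(1-\calB_{f,E}(\Frob_w)X\right) \pmod{\frakP}
\]
for all places \(w\) of \(E\) of degree one over a rational prime \(\ell\) split in \(E\) with \(\ell\nmid pD_E\). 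Such places have Dirichlet density one among places of \(E\), since inert primes give degree-two places. This lets us work only at split primes, where the local group is \(\GL_4\).

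Fix such \(\ell=ww^c\). The right-hand side equals \(L_\ell(X,f)(1-\ell^{k-2}X)(1-\ell^{k+j-1}X)=L_\ell(X,f)A_\ell(X)\), because \(\Frob_w\) restricts to \(\Frob_\ell\) and \(\overline{\omega}(\Frob_\ell)=\ell^{-1}\). On the left, \(\det(1-R_G^\natural(\Frob_w)X)=P_w^\natural(X,G)\).

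The key step is to extract the single \(w\)-factor from the Hecke eigenvalue congruence. The congruence \(G\equiv_{ev}[f]^2_{j+4}\) is for the Hecke algebra of \(\GU_{2,2}(\bbQ_\ell)\). At a split prime this is essentially the spherical Hecke algebra of \(\GL_4(\bbQ_\ell)\times\GL_1(\bbQ_\ell)\). Its eigenvalues determine the Satake parameter \(s_w(G)\) (with its normalization \(\ell^{k+j/2-3}\)) through the elementary symmetric functions, i.e. through the Hecke operators \(T_{w,i}\) attached to \(\mathrm{diag}(\varpi_w I_i,I_{4-i})\). Hence the congruence of eigenvalues gives \(P_w^\natural(X,G)\equiv P_w^\natural(X,[f]^2_{j+4})\) coefficientwise, once integrality of the normalized coefficients is checked. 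So I would compute \(P_w^\natural(X,[f]^2_{j+4})\) directly from the Klingen--Eisenstein induction: the Satake parameter of \([f]^2_{j+4}\) at \(w\) is that of the parabolic induction from \(\GL_1\times\U_{1,1}\times\GL_1\)-type data. Using the standard \(L\)-polynomial formula already quoted in the proof of Theorem~\ref{thm:harder} and choosing the right half, this gives
\[
  P_w^\natural(X,[f]^2_{j+4})=L_\ell(X,f)(1-\ell^{k-2}X)(1-\ell^{k+j-1}X).
\]
The alternative route would take the rational identity \(L_\ell^\natural=P_wP_{w^c}\) and try to split it, but squares of congruent polynomials do not determine the factors. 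I therefore prefer the direct \(w\)-Hecke operator route.

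The main obstacle is precisely this splitting at \(w\). One must confirm two things. First, that \(\equiv_{ev}\) on \(\GU_{2,2}\) controls the individual \(w\)-Satake polynomial, not just the symmetric product over \(w\) and \(w^c\). Second, that the Eisenstein parameter at \(w\) lands in the expected half, with \(\ell^{k-2}\) and \(\ell^{k+j-1}\) attached to \(w\) and not swapped with their duals. The second point should follow from the identity \((R_G^\natural)^\vee\simeq(R_G^\natural)^c\omega_E^{2k+j-3}\) and the self-duality of \(L_\ell(X,f)A_\ell(X)\) under \(X\mapsto \ell^{-(2k+j-3)}X^{-1}\). In fact this self-duality makes the \(w\) and \(w^c\) factors of the Eisenstein series equal, which removes the ambiguity. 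The similitude central character is harmless, since \(\widetilde G\) has trivial similitude character.
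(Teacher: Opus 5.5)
Your proposal is correct and follows essentially the same route as the paper. At places $w$ above split primes $\ell\nmid pD_E$, the Satake isomorphism for $\U_{2,2}(\bbQ_\ell)\simeq\GL_4(\bbQ_\ell)$ converts $G\equiv_{ev}[f]^2_{j+4}$ into $P_w^\natural(X,G)\equiv P_w^\natural(X,[f]^2_{j+4})$, the explicit split Euler factor of the Klingen--Eisenstein lift identifies the latter with $\det(1-\calB_{f,E}(\Frob_w)X)$, and Chebotarev with Brauer--Nesbitt finishes; your extra care about isolating the single $w$-factor rather than the product $P_w^\natural P_{w^c}^\natural$ is a point the paper passes over by simply invoking the Satake isomorphism at $w$.
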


\begin{proof}
  Let \(\ell\nmid pD_E\) be a rational prime which splits in \(E\),
  and write \(\ell=ww^c\).
  Under the Satake isomorphism for \(\U_{2,2}(\bbQ_\ell)\simeq\GL_4(\bbQ_\ell)\),
  the congruence \(G\equiv_{ev}[f]^2_{j+4}\pmod{\frakP}\) implies
  \[
    P_w^\natural(X,G) \equiv P_w^\natural(X,[f]^2_{j+4}) \pmod{\frakP}.
  \]
  Since
  \[
    \det\left(1-R_G^\natural(\Frob_w)X\right) = P_w^\natural(X,G),
  \]
  reducing modulo \(\frakP\) gives
  \[
    \det\left(1-\overline{R}_G^{\natural,ss}(\Frob_w)X\right) \equiv P_w^\natural(X,G) \pmod{\frakP}.
  \]

  On the other hand,
  the explicit split Euler factor of the Hermitian Klingen--Eisenstein lift gives
  \[
    P_w^\natural(X,[f]^2_{j+4}) \equiv \det\left( 1- \left( \overline{\rho}_f^{ss}|_{\Gal_E} +
      \overline{\omega}_E^{-k+2} + \overline{\omega}_E^{-k-j+1} \right)(\Frob_w)X \right) \pmod{\frakP}.
  \]
  Therefore,
  for every rational prime \(\ell\nmid pD_E\) which splits in \(E\),
  and for every choice of \(w\mid\ell\),
  we have
  \[
    \det\left(1-\overline{R}_G^{\natural,ss}(\Frob_w)X\right) = \det\left(1-\calB_{f,E}(\Frob_w)X\right).
  \]

  Let \(L/E\) be a finite Galois extension through which the two residual representations factor,
  and let \(L'/\bbQ\) be a finite Galois extension containing the Galois closure of \(L/\bbQ\) and \(E\).
  By the Chebotarev density theorem applied to \(L'/\bbQ\),
  Frobenius elements at places \(w\) of \(E\) lying above rational primes that split in \(E\) are sufficient to determine the semisimple characters on \(\Gal(L/E)\).
  Hence the equality of the characteristic polynomials at all such \(w\) implies equality of the semisimplified residual representations.
  Therefore \(\overline{R}_G^{\natural,ss}=\calB_{f,E}\).
\end{proof}

\begin{lem}
  \label{lem:no-onedim}
  Assume that \(p\) is unramified in \(E\) and that \((p-1)\nmid(j+1)\).
  Let \(\Delta\) be a one-dimensional \(p\)-adic representation of \(\Gal_E\).
  Assume that \(\Delta\) satisfies
  \[
    \Delta^c \simeq \Delta^\vee\otimes\omega_E^{3-2k-j}.
  \]
  Let \(\overline{\Delta}\) be the residual representation of \(\Delta\).
  Then
  \[
    \overline{\Delta} \ne \overline{\omega}_E^{-k+2} \quad \text{and} \quad \overline{\Delta} \ne
    \overline{\omega}_E^{-k-j+1}.
  \]
\end{lem}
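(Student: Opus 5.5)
The plan is to restrict the duality relation to inertia at a place \(v\mid p\) of \(E\). Since \(p\) is unramified in \(E\), we have \(E_v=\bbQ_p\) (split case) or the unramified quadratic extension of \(\bbQ_p\) (inert case). In both cases \(\overline{\omega}_E|_{I_v}\) is the fundamental character of level one, of exact order \(p-1\).

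Suppose, for contradiction, that \(\overline{\Delta}=\overline{\omega}_E^{a}\) with \(a\in\{-k+2,\,-k-j+1\}\). Reducing the relation \(\Delta^c\simeq\Delta^\vee\otimes\omega_E^{3-2k-j}\) modulo \(\frakP\) gives
\[
  \overline{\Delta}^c = \overline{\Delta}^{-1}\,\overline{\omega}_E^{3-2k-j}.
\]
The cyclotomic character extends to \(\Gal_\bbQ\), so \(\overline{\omega}_E^c=\overline{\omega}_E\). Hence \(\overline{\Delta}^c=\overline{\omega}_E^{a}\), and the relation becomes
\[
  \overline{\omega}_E^{2a}=\overline{\omega}_E^{3-2k-j}.
\]
Substituting the two values of \(a\):
\begin{itemize}
  \item For \(a=-k+2\), this reads \(\overline{\omega}_E^{\,j+1}=1\).
  \item For \(a=-k-j+1\), this reads \(\overline{\omega}_E^{-(j+1)}=1\), which is the same condition.
\end{itemize}

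Restricting to \(I_v\), where \(\overline{\omega}_E\) has order exactly \(p-1\), we obtain \((p-1)\mid(j+1)\). This contradicts the hypothesis, so \(\overline{\Delta}\ne\overline{\omega}_E^{-k+2}\) and \(\overline{\Delta}\ne\overline{\omega}_E^{-k-j+1}\).

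The only point needing care is justifying that \(\overline{\omega}_E|_{I_v}\) surjects onto \(\bbF_p^\times\). This holds because \(E_v(\zeta_p)/E_v\) is totally ramified of degree \(p-1\) when \(E_v/\bbQ_p\) is unramified. We also need that conjugation by \(c\) fixes \(\overline{\omega}_E\), which is immediate from the restriction of a \(\Gal_\bbQ\)-character. Note that \(j\) even makes \(j+1\) odd, so the hypothesis is automatic for \(p>j+2\); no other input from earlier in the paper is required.
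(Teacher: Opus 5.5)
Your proof is correct and is essentially the paper's argument: reduce the duality relation modulo \(\frakP\), use \(\overline{\omega}_E^{\,c}=\overline{\omega}_E\) to get \(\overline{\omega}_E^{\pm(j+1)}=1\), and conclude because \(\overline{\omega}_E\) has order \(p-1\). Your inertia argument at \(v\mid p\) makes explicit where the unramifiedness of \(p\) in \(E\) is used, which the paper leaves implicit; incidentally, since \(p-1\) is even and \(j+1\) is odd, the divisibility hypothesis in fact holds for every odd \(p\).
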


\begin{proof}
  Reducing modulo \(\frakP\),
  we obtain
  \[
    \overline{\Delta}^{\,c} = \overline{\Delta}^{\,\vee} \otimes \overline{\omega}_E^{3-2k-j}.
  \]
  Since \(\overline{\omega}_E\) is invariant under conjugation by \(c\),
  if \(\overline{\Delta}=\overline{\omega}_E^{-k+2}\),
  then
  \[
    \overline{\omega}_E^{-k+2} = \overline{\omega}_E^{k-2} \otimes \overline{\omega}_E^{3-2k-j} =
    \overline{\omega}_E^{1-k-j}.
  \]
  Hence
  \[
    \overline{\omega}_E^{j+1}=1.
  \]
  Since \(\overline{\omega}_E\) has order \(p-1\),
  this implies \((p-1)\mid(j+1)\),
  contradicting the assumption.

  Similarly,
  if \(\overline{\Delta}=\overline{\omega}_E^{-k-j+1}\),
  then the same argument gives \(\overline{\omega}_E^{j+1}=1\).
  Again,
  this implies \((p-1)\mid(j+1)\),
  contradicting the assumption.
\end{proof}

We also need the following vanishing result for extensions between the two character constituents.
Put \(\theta_1=\overline\omega^{-k+2}\) and \(\theta_2=\overline\omega^{-k-j+1}\).

\begin{lem}
  \label{lem:paired-character-bc-plus-selmer}
  Assume that \(p>j+2\) and \(p\nmid D_E\).
  Then
  \[
    H^1_{\calF_E} \left( \bbQ, \Hom(\theta_2\overline\chi_E,\theta_1) \right)^{+} =0
  \]
  and
  \[
    H^1_{\calF_E} \left( \bbQ, \Hom(\theta_1\overline\chi_E,\theta_2) \right)^{+} =0.
  \]
\end{lem}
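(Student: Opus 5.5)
The plan is to show that the BC involution \(\tau_\bbQ\) of Definition~\ref{dfn:bc-extension-sign} acts on both coefficient modules by \(-1\), so that the \(+\)-eigenspaces vanish for purely formal reasons, with no arithmetic input. This matches the hypotheses: unlike Lemmas~\ref{lem:omega-odd-selmer} and~\ref{lem:chiE-selmer}, the statement assumes no \(L\)-value or class-number condition, only \(p>j+2\) and \(p\nmid D_E\). First I will identify the coefficient modules. We have \(\theta_1\theta_2^{-1}=\overline\omega^{j+1}\), so
\[
  \Hom(\theta_2\overline\chi_E,\theta_1)=\overline\omega^{j+1}\overline\chi_E,
  \qquad
  \Hom(\theta_1\overline\chi_E,\theta_2)=\overline\omega^{-(j+1)}\overline\chi_E .
\]
In both cases put \(S=\theta_1\), \(T=\theta_2\overline\chi_E\) (respectively \(S=\theta_2\), \(T=\theta_1\overline\chi_E\)) and \(\mu=\theta_1\theta_2\overline\chi_E\). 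Then \(T\simeq S^\vee\otimes\mu=D_{\mu,\bbQ}(S)\), and this is the identification defining \(\tau_\bbQ\). The hypothesis \(p>j+2\) guarantees \(\overline\omega^{j+1}\neq1\), so \(S\not\simeq T\) and the two constituents are genuinely distinct.

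The main step is an explicit computation of \(\tau_\bbQ\) on cocycles. An extension \(X\) of \(T\) by \(S\) is given by
\[
  g\mapsto\begin{pmatrix} S(g) & c(g)\,T(g)\\ 0 & T(g)\end{pmatrix}
\]
with \(c\) a cocycle valued in \(S T^{-1}\). Its image under \(\tau_\bbQ\) is \(X^\vee\otimes\mu\), brought back to upper-triangular form by conjugating with the antidiagonal matrix \(w=\bigl(\begin{smallmatrix}0&1\\1&0\end{smallmatrix}\bigr)\). Using \(S^{-1}\mu=T\) and \(T^{-1}\mu=S\), the inverse-transpose produces the off-diagonal entry \(-c(g)T(g)\) in the same position. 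So \(\tau_\bbQ(c)=-c\) as classes.

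Equivalently, in the language of Definition~\ref{dfn:self-duality-sign}, the pairing \(X\times X\to\mu\) induced by \(S\otimes T\to\mu\) is forced to be alternating rather than symmetric on a non-split extension. This is the one-dimensional analogue of the formula \(A^\dagger=\tr(A)\id_V-A\) in Corollary~\ref{cor:bc-plus-rho-selmer}: there the adjoint with respect to an alternating pairing contains a \(-A\) term, and here the trace part is absent, leaving \(A^\dagger=-A\).

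Granting \(\tau_\bbQ=-1\), any \(x\) with \(\tau_\bbQ(x)=x\) satisfies \(2x=0\), hence \(x=0\) because \(p\neq2\); this uses \(p>j+2\ge4\). The local conditions of \(\calF_E\) are \(\tau_\bbQ\)-stable, as noted in Definition~\ref{dfn:bc-extension-sign}, so \(H^1_{\calF_E}(\bbQ,\cdot)^{+}=0\) for both modules. The hypothesis \(p\nmid D_E\) is used only to ensure that \(\calF_E\) is defined and that \(\overline\chi_E\) is a well-defined nontrivial character mod \(\frakP\).

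I expect the main obstacle to be pinning down the sign convention precisely. The outcome \(\tau_\bbQ=-1\) rather than \(+1\) depends on how the identification \(T\simeq D_{\mu,\bbQ}(S)\) is fixed and on whether the BC sign of Definition~\ref{dfn:bc-extension-sign} is taken with respect to the symmetric or the alternating normalization. I will check it by specializing the two-dimensional computation of Corollary~\ref{cor:bc-plus-rho-selmer}, where scalar \(A\) is \(+\)-invariant and traceless \(A\) is \(-\)-invariant, to the rank-one, off-diagonal situation, and make sure the conventions agree. If the convention instead gave \(\tau_\bbQ=+1\), the fallback would be to bound the full groups. For \(\overline\omega^{-(j+1)}\overline\chi_E\) this is a negative twist with trivial finite condition at \(p\), to be handled via Herbrand--Ribet on \(\bbQ(\zeta_p,\sqrt{D_E})\) as in Lemma~\ref{lem:omega-odd-selmer}. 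For \(\overline\omega^{j+1}\overline\chi_E\) one would need a Poitou--Tate argument. Both would require extra \(L\)-value hypotheses (such as \(\frakP\nmid L(-j,\chi_E)\)), which the statement does not assume. That absence is strong evidence that the sign argument is the intended route.
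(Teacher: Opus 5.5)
Your proposal is correct and is essentially the paper's argument. The paper takes a symmetric matrix \(J\) representing a BC-sign \(+1\) self-duality of the extension, uses \(\theta_1^2\ne\lambda\) (with \(\lambda=\theta_1\theta_2\overline\chi_E\)) to normalize it to \(\bigl(\begin{smallmatrix}0&1\\1&d\end{smallmatrix}\bigr)\), and reads off from the lower-right entry that the off-diagonal cocycle is \(\frac{d}{2}(\theta_1-\theta_2\overline\chi_E)\), a coboundary; this is your observation that a non-split extension carries only an alternating duality, i.e.\ \(\tau_\bbQ=-1\), so the \(+\)-part dies because \(p\ne2\). One small correction: \(S\not\simeq T\) means \(\overline\omega^{j+1}\ne\overline\chi_E\), which follows from the ramification of \(\overline\chi_E\) at \(q\mid D_E\) (as \(p\nmid D_E\)) rather than from \(\overline\omega^{j+1}\ne1\), though your cocycle computation of \(\tau_\bbQ\) does not need it.
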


\begin{proof}
  Put \(\lambda=\theta_1\theta_2\overline\chi_E=\overline\omega^{-2k-j+3}\overline\chi_E\).
  We prove the first assertion.
  Let
  \[
    0\rightarrow\theta_1 \rightarrow X \rightarrow\theta_2\overline\chi_E \rightarrow0
  \]
  represent a class in the displayed extension group with BC sign \(+1\).

  Choose a basis \(e_1,e_2\) of \(X\) such that \(e_1\) spans \(\theta_1\) and the image of \(e_2\) spans \(\theta_2\overline\chi_E\).
  The action on \(X\) admits a matrix \(J\) representing a self-duality of BC sign \(+1\) and satisfying
  \begin{equation}\label{eq:char-ext}
    {}^t\rho_X(g)J\rho_X(g)=\lambda(g)J.
  \end{equation}

  Since \(\theta_1^2\ne\lambda\),
  the upper-left entry of \(J\) is zero.
  After rescaling \(e_2\),
  we may write
  \begin{align*}
    J         & = \begin{pmatrix}
                    0 & 1 \\
                    1 & d
                  \end{pmatrix},
    \\
    \rho_X(g) & = \begin{pmatrix}
                    \theta_1(g) & u(g)                       \\
                    0           & \theta_2\overline\chi_E(g)
                  \end{pmatrix}.
  \end{align*}
  The lower-right entry of \eqref{eq:char-ext} gives
  \[
    u(g)=\frac d2(\theta_1(g)-\theta_2\overline\chi_E(g)).
  \]
  Replacing \(e_2\) by \(e_2-\frac d2e_1\) makes \(u(g)=0\) for every \(g\),
  so the extension splits.
  The second assertion follows by interchanging \(\theta_1\) and \(\theta_2\).
\end{proof}

\begin{lem}
  \label{lem:char-block}
  Put \(\delta_1=\overline\omega_E^{-k+2}\) and \(\delta_2=\overline\omega_E^{-k-j+1}\).
  Let \(X\) be a two-dimensional \(K_\frakP\)-linear representation of \(\Gal_E\).
  Assume the following conditions.
  \begin{enumerate}
    \item \(p>j+3\),
          \(p\) is unramified in \(E\),
          \(\Tame(-j-1)\) holds,
          and \(\frakP\nmid\zeta(-j-1)\).

    \item \(X\) is unramified outside the places above \(pD_E\) and short crystalline at the places above \(p\).

    \item \(X\) is conjugate self-dual with multiplier \(\omega_E^{-2k-j+3}\) and BC sign \(+1\).

    \item The representation \(X\) admits a \(\Gal_E\)-stable lattice \(T\) such that
          \[
            (T/\frakP T)^{ss}=\delta_1+\delta_2.
          \]
  \end{enumerate}
  Then \(X\) is not absolutely irreducible.
\end{lem}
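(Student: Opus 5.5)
We argue by contradiction: suppose \(X\) is absolutely irreducible. Since \((T/\frakP T)^{ss}=\delta_1+\delta_2\) with \(\delta_1\ne\delta_2\) (because \(\delta_1\delta_2^{-1}=\overline\omega_E^{j+1}\) is non-trivial, as \(p-1>j+1\) and \(p\) is unramified in \(E\)), Lemma~\ref{lem:lattice-selmer} with \(L=E\) applies. After changing the lattice and passing to a residual subquotient, it produces a non-split extension \(0\to\delta_a\to Y\to\delta_b\to0\) with \(\{a,b\}=\{1,2\}\). Its class is a non-zero element of \(H^1_{\calF_E}(E,\delta_a\delta_b^{-1})\), that is, of \(H^1_{\calF_E}(E,\overline\omega_E^{\pm(j+1)})\). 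We then use the conjugate self-duality of \(X\) with BC sign \(+1\), following Bella\"iche--Chenevier. The multiplier is \(\omega_E^{-2k-j+3}\), and \(\delta_1\delta_2=\overline\omega_E^{-2k-j+3}\), so \(D_{\mu,E}(\delta_b)=\delta_a\). Hence the duality involution \(\tau_E\) of Definition~\ref{dfn:bc-extension-sign} acts on \(H^1(E,\Hom(\delta_b,\delta_a))\). Invoking the lattice result of \cite{Bellaiche2009Families} (the sign version of \cite[Corollary~7.5]{Atobe2023HarderII}), we arrange the lattice so that the extension is self-dual, and hence its class lies in the \(+1\)-eigenspace \(H^1_{\calF_E}(E,\Hom(\delta_b,\delta_a))^{+}\).

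The next step is to descend to \(\bbQ\). Since \(p\ne2\), restriction identifies \(H^1(E,\overline\omega_E^{m})\) with \(H^1(\bbQ,\overline\omega^m)\oplus H^1(\bbQ,\overline\omega^m\overline\chi_E)\), the decomposition being into eigenspaces for the action of \(c\). This is compatible with the \(\calF_E\) local conditions, since the conditions at places above \(D_E\) are relaxed and \(p\) is unramified in \(E\). As in the proof of Lemma~\ref{lem:paired-character-bc-plus-selmer}, \(\tau_E\) interchanges the two descriptions up to the sign. We claim that the BC \(+1\) part corresponds exactly to the \(\overline\chi_E\)-isotypic piece, with the \(c\)-invariant piece carrying sign \(-1\), or the reverse. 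We would verify this by writing the hermitian form \(J=\begin{pmatrix}0&1\\ \epsilon&0\end{pmatrix}\) on an upper-triangular model, exactly as in Lemma~\ref{lem:paired-character-bc-plus-selmer}, and tracking how \(c\) acts.

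If the \(+\) part lands in \(H^1_{\calF_E}(\bbQ,\Hom(\theta_b\overline\chi_E,\theta_a))^{+}\), Lemma~\ref{lem:paired-character-bc-plus-selmer} already kills it, using \(p>j+2\). Otherwise it lands in \(H^1_{\calF_E}(\bbQ,\overline\omega^{\pm(j+1)})\). For the exponent \(a=-j-1\), which is odd with \(1-p<a<0\) because \(p>j+3\), Lemma~\ref{lem:omega-odd-selmer} together with \(\Tame(-j-1)\) gives vanishing, provided \(\frakP\nmid L(0,\widetilde\omega^{j+1})\). Since \(L(0,\widetilde\omega^{j+1})\equiv\zeta(-j-1)\) modulo \(\frakP\) by Kummer congruences, this follows from \(\frakP\nmid\zeta(-j-1)\). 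For the opposite exponent \(j+1>0\), we would use the self-duality argument of Lemma~\ref{lem:paired-character-bc-plus-selmer} to swap the direction of the extension, or else apply Lemma~\ref{lem:partitioned-lattice-selmer} to choose the direction freely, so that only the negative exponent needs to be handled.

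Every case produces a contradiction, so \(X\) is not absolutely irreducible. The main obstacle is the sign bookkeeping in the second paragraph. We must show that the BC \(+1\) condition on an extension over \(E\) forces its descended class into precisely the Selmer groups already shown to vanish, and we must also justify that the extension produced by Lemma~\ref{lem:lattice-selmer} can be taken to carry the \(+1\) sign. We would handle both points by an explicit \(2\times2\) matrix computation mirroring the proof of Lemma~\ref{lem:paired-character-bc-plus-selmer}.
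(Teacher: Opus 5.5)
Your architecture is the paper's: a lattice lemma, BC sign \(+1\), the Shapiro splitting into the \(\overline\omega^{-j-1}\)- and \(\overline\chi_E\overline\omega^{-j-1}\)-summands, then Lemma~\ref{lem:paired-character-bc-plus-selmer} and Lemma~\ref{lem:omega-odd-selmer} with \(a=-j-1\). There is, however, a genuine gap: you never obtain the extension in the direction that gives the negative exponent.

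Lemma~\ref{lem:lattice-selmer} does not let you choose which of \(\delta_1,\delta_2\) is the subobject, and neither of your two remedies works.
\begin{itemize}
\item The duality \(D_{\mu,E}\) swaps \(\delta_1\) and \(\delta_2\) \emph{and} reverses arrows. It therefore sends an extension of \(\delta_b\) by \(\delta_a\) to another extension of \(\delta_b\) by \(\delta_a\). This is exactly why \(\tau_E\) acts on \(H^1(E,\Hom(\delta_b,\delta_a))\), as you note yourself, and it can never swap the direction.
\item Lemma~\ref{lem:partitioned-lattice-selmer} only asserts a cross extension in one of the two directions, unspecified.
\end{itemize}

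The positive direction cannot be rescued by any Selmer vanishing.
\begin{itemize}
\item Since \(2\le j+1\le p-2\), we have \(H^1_f(\bbQ_p,K_\frakP(j+1))=H^1(\bbQ_p,K_\frakP(j+1))\) and \(H^2(\bbQ_p,\calO_\frakP(j+1))=0\). Hence the residual condition at \(p\) is all of \(H^1(\bbQ_p,\overline\omega^{j+1})\).
\item The global Euler characteristic formula for the odd character \(\overline\omega^{j+1}\) gives \(H^1(\Gal_{\bbQ,\{p\}},\overline\omega^{j+1})\ne0\). All these classes satisfy the \(\calF_E\)-conditions.
\item So \(H^1_{\calF_E}(\bbQ,\overline\omega^{j+1})\ne0\) unconditionally. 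A sign-\(+1\) class in \(H^1_{\calF_E}(E,\overline\omega_E^{j+1})\), which is exactly a class restricted from this group, yields no contradiction.
\end{itemize}

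The missing input is Ribet's lattice lemma \cite[Proposition~2.1]{Ribet1976Modular}, which is what the paper uses. For an absolutely irreducible two-dimensional \(X\) with \((T/\frakP T)^{ss}=\delta_1+\delta_2\) and \(\delta_1\ne\delta_2\), there is a stable lattice whose reduction is non-split with \(\delta_2\) as subobject. The class then lies in \(H^1_{\calF_E}(E,\overline\omega_E^{-j-1})\).

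You still need sign \(+1\) for that particular lattice. The paper quotes \cite[Proposition~1.8.10(i)]{Bellaiche2009Families}. You can also see it directly: since duality preserves the direction of residual extensions, it fixes both endpoints of the segment of homothety classes of stable lattices. Hence it fixes every stable lattice up to scaling, and the symmetric form reduces to a perfect symmetric form on \(T/\frakP T\).

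Your proposed \(2\times 2\) computation then settles the hedge in your second paragraph. Take \(c\) a complex conjugation and let \(v\) be the cocycle. The symmetric form forces \(g\mapsto v(cgc^{-1})+v(g)\) to be a coboundary. Because \(\overline\omega^{-j-1}(c)=-1\), this says the class is \(c\)-invariant for the \(\Gal_\bbQ\)-module \(\overline\omega^{-j-1}\), i.e.\ it is restricted from \(H^1_{\calF_E}(\bbQ,\overline\omega^{-j-1})\). So the sign-\(+1\) part is the untwisted summand, and Lemma~\ref{lem:omega-odd-selmer} does the real work; the paper simply kills both summands.
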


\begin{proof}
  Suppose that \(X\) is absolutely irreducible.
  By Ribet's lattice lemma \cite[Proposition~2.1]{Ribet1976Modular},
  after replacing \(T\) by another \(\Gal_E\)-stable lattice,
  we may arrange that \(T/\frakP T\) fits into the following non-split exact sequence:
  \[
    0\rightarrow \delta_2 \rightarrow T/\frakP T \rightarrow \delta_1 \rightarrow 0.
  \]
  Its class satisfies the \(\calF_E\)-local conditions.

  Since \(\calO_\frakP\) is a domain,
  \cite[Proposition~1.8.10(i)]{Bellaiche2009Families} shows that conjugate duality acts trivially on \(\Ext^1_{\kappa_\frakP[\Gal_E]}(\delta_1,\delta_2)\).
  Hence,
  under the identification
  \[
    \Ext^1_{\kappa_\frakP[\Gal_E]}(\delta_1,\delta_2) \simeq H^1\left(E,\overline\omega_E^{-j-1}\right),
  \]
  the extension class is a non-zero element of \(H^1_{\calF_E} \left( E,\overline\omega_E^{-j-1} \right)^{+}\).

  The Shapiro decomposition is compatible with these involutions and therefore induces
  \[
    H^1_{\calF_E} \left(E,\overline\omega_E^{-j-1}\right)^{+} \simeq H^1_{\calF_E} \left( \bbQ, \overline\omega^{-j-1}
    \right)^{+} \oplus H^1_{\calF_E} \left( \bbQ, \overline\chi_E\overline\omega^{-j-1} \right)^{+}.
  \]
  Since \(\overline\chi_E\overline\omega^{-j-1} = \Hom \left( \theta_1\overline\chi_E, \theta_2 \right)\),
  Lemma~\ref{lem:paired-character-bc-plus-selmer} shows that the second summand vanishes.
  Since the extension class is non-zero,
  we obtain
  \[
    H^1_{\calF_E} \left( \bbQ, \overline\omega^{-j-1} \right)^{+} \ne0.
  \]
  In particular,
  \[
    H^1_{\calF_E} \left( \bbQ, \overline\omega^{-j-1} \right) \ne0.
  \]

  We now apply Lemma~\ref{lem:omega-odd-selmer} with \(a=-j-1\).
  The condition \(\Tame(-j-1)\) holds by assumption.
  Moreover,
  since \(p>j+3\),
  the generalized Kummer congruences and the assumption \(\frakP\) does not divide \(\zeta(-j-1)\) imply \(\frakP\nmid L(0,\widetilde\omega^{j+1})\).
  Lemma~\ref{lem:omega-odd-selmer} therefore gives
  \[
    H^1_{\calF_E} \left( \bbQ, \overline\omega^{-j-1} \right) = 0,
  \]
  which is a contradiction.
\end{proof}

\begin{prop}
  \label{prop:RG-irred}
  Let \(k\geq 4\) and \(j\geq 2\) be integers with \(j\) even.
  Let \(G \in \calS_{\tau_{k,j}}(\U_{2,2})\) be a Hecke cusp eigenform on \(\U_{2,2}\).
  Let \(R_G^\natural\) be the \(p\)-adic Galois representation of \(\Gal_E\) constructed above.
  Let \(f\) be a primitive elliptic cusp form of weight \(2k+j-2\).
  After enlarging the coefficient field,
  let \(K\) be a number field containing the Hecke fields of \(f\) and \(G\),
  and over which \(R_G^\natural\) is realized.
  Let \(\frakP\) be a prime ideal of \(K\).
  Assume the following conditions.
  \begin{enumerate}
    \item \label{itm:stable-congruence}
          \(G\equiv_{ev}[f]^2_{j+4}\pmod{\frakP}\).

    \item \label{itm:p-range} The prime \(p=p_\frakP\) satisfies \(p>2k+j-2\) and is unramified in \(E\).

    \item \label{itm:mult-free} After a suitable choice of a lattice,
          the image of \(\Gal_{\bbQ(\zeta_{p^\infty})}\) under the Galois representation attached to \(f\) contains \(\SL_2(\calO_\frakP)\).

    \item \label{itm:tame} We have \(\Tame(-j-1)\).

    \item \label{itm:char-special} \(\frakP\) does not divide \(\zeta(-j-1)\).
  \end{enumerate}
  Then \(R_G^\natural\) is absolutely irreducible.
\end{prop}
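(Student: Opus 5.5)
We argue by contradiction and organize everything around the residual decomposition of Proposition~\ref{prop:res-class-E}. By hypothesis~(\ref{itm:stable-congruence}) we have \(\overline{R}_G^{\natural,ss}=\calB_{f,E}=\overline\rho_f|_{\Gal_E}+\delta_1+\delta_2\), with \(\delta_1=\overline\omega_E^{-k+2}\) and \(\delta_2=\overline\omega_E^{-k-j+1}\). Hypothesis~(\ref{itm:mult-free}) implies that \(\overline\rho_f\) is absolutely irreducible even on \(\Gal_{\bbQ(\zeta_p)}\), hence on \(\Gal_E\): the fields \(E\) and \(\bbQ(\zeta_p)\) are linearly disjoint since \(p\nmid D_E\), and the image of \(\Gal_{E(\zeta_{p^\infty})}\) still contains \(\SL_2\) modulo \(\frakP\). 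The three residual constituents are pairwise non-isomorphic, because \(\delta_1\ne\delta_2\) follows from \((p-1)\nmid(j+3)\), and this holds since \(p>2k+j-2>j+3\). Suppose \(R_G^\natural\) is reducible over \(\overline{\bbQ}_p\), and decompose it into absolutely irreducible pieces. Conjugate self-duality \((R_G^\natural)^\vee\simeq(R_G^\natural)^c\omega_E^{2k+j-3}\) permutes these pieces. Each piece is crystalline at \(p\) with Hodge--Tate weights in \(\{-2k-j+3,-k-j+1,-k+2,0\}\), and is unramified outside \(pD_E\) by Lemma~\ref{lem:RG-crys-E}.

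Next we match residual constituents to pieces. A piece whose reduction contains \(\overline\rho_f|_{\Gal_E}\) has dimension at least \(2\). We then case-split on the dimensions of the pieces.

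\emph{Case (a): there is a one-dimensional piece \(\Delta\).} If \(\Delta\) is fixed by the duality, then \(\Delta^c\simeq\Delta^\vee\omega_E^{3-2k-j}\), and Lemma~\ref{lem:no-onedim} shows that \(\overline\Delta\) is neither \(\delta_1\) nor \(\delta_2\). The condition \((p-1)\nmid(j+1)\) holds because \(p>j+2\). Since \(\overline\Delta\) cannot be \(\overline\rho_f\), this is a contradiction. If instead \(\Delta\) is swapped with another piece \(\Delta'\), then the pair accounts for \(\delta_1,\delta_2\) and the rest is a two-dimensional self-dual piece lifting \(\overline\rho_f\). In that situation the characters \(\Delta\) and \(\Delta'\) are crystalline Hecke characters of \(E\) unramified outside \(pD_E\) with \(\overline\Delta=\delta_i\). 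I would rule this out through Hodge--Tate weights. Since \(\Delta\cdot(\Delta^c)\) must equal \(\omega_E^{3-2k-j}\) up to finite order, a residual comparison combined with the weight sets forces a configuration like that of Lemma~\ref{lem:no-onedim}. Alternatively one can note that in this case \(R_G^\natural\) contains \(\Delta\oplus\Delta'\), and that this sum should itself be viewed as a two-dimensional conjugate self-dual block with BC sign \(+1\), after which we proceed as in case (b). I expect the conjugate-pair subcase to be the main obstacle. My first attempt would be to show, via the Arthur parameter of \(G\) in Mok's classification, that the \(\GL_1\) pieces occur as \(\mu\boxplus\mu\)-type conjugate self-dual characters. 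Because Mok's parameters are built from conjugate self-dual summands, swapped pairs cannot arise, and every piece is fixed by the duality.

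\emph{Case (b): a two-dimensional piece \(X\) with residual semisimplification \(\delta_1+\delta_2\), the complementary piece lifting \(\overline\rho_f\).} The duality must fix \(X\), since its complement is distinguished by its reduction. Hence \(X\) is conjugate self-dual with multiplier \(\omega_E^{-2k-j+3}\). Its BC sign is inherited from the sign of \(R_G^\natural\), which is \(+1\) by Bella\"iche--Chenevier, since \(\pi_G\) is cohomological on a unitary group in an even number of variables. If \(X\) is absolutely irreducible, Lemma~\ref{lem:char-block} gives a contradiction, using hypotheses (\ref{itm:p-range}), (\ref{itm:tame}) and (\ref{itm:char-special}) together with \(p>j+3\). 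If \(X\) is reducible, we return to case (a).

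\emph{Case (c): a three-dimensional piece together with a one-dimensional piece.} The one-dimensional piece falls under case (a). Since cases (a)--(c) exhaust all reducible decompositions, each yields a contradiction, and \(R_G^\natural\) is absolutely irreducible.
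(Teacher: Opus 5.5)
There is a genuine gap, and it sits exactly where you flag it. Your overall architecture matches the paper's: argue by contradiction, remove one-dimensional pieces with Lemma~\ref{lem:no-onedim}, and remove a two-dimensional piece with residual semisimplification \(\delta_1+\delta_2\) by Lemma~\ref{lem:char-block} with BC sign \(+1\). The \((3,1)\) case is fine, because the duality preserves dimensions and so the lone character is self-paired. The \((2,2)\) case is fine when \(X\) is absolutely irreducible.

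The problem is the type \((2,1,1)\) configuration
\[
  R_G^\natural\simeq Y\oplus\Delta\oplus\Delta',\qquad \overline\Delta=\delta_1,\quad \overline{\Delta'}=\delta_2 .
\]
This is not a side case. The duality sends a piece reducing to \(\delta_1\) to one reducing to \((\delta_1^{-1})^c\,\overline\omega_E^{-(2k+j-3)}=\delta_2\neq\delta_1\). So in type \((2,1,1)\) the two characters are always exchanged, and Lemma~\ref{lem:no-onedim}, whose hypothesis is \(\Delta^c\simeq\Delta^\vee\otimes\omega_E^{3-2k-j}\), never applies to them. The paper's proof disposes of all one-dimensional pieces in one sentence via Lemma~\ref{lem:no-onedim}, tacitly treating each piece as individually conjugate self-dual. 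You are right that this needs an argument, but none of your three suggestions supplies one.

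\emph{Suggestion (1), Hodge--Tate weights.} In the swapped case the relation is \(\Delta\cdot(\Delta')^{c}=\omega_E^{3-2k-j}\), not a relation on \(\Delta\Delta^c\). Do the Hodge--Tate computation with \(p\) unramified in \(E\), \(p-1>2k+j-3\), and inertia at \(p\) acting on \(\overline\Delta\) through \(\overline\omega^{-k+2}\). This forces every Hodge--Tate weight of \(\Delta\) to be \(-k+2\). Hence \(\Delta=\omega_E^{-k+2}\psi\) and \(\Delta'=\omega_E^{-k-j+1}(\psi^c)^{-1}\), with \(\psi\) of finite order and unramified outside \(D_E\). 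This is fully compatible with the duality. For \(\psi=1\) and \(Y=\rho_f|_{\Gal_E}\) it is exactly the representation matching the split Euler factors of \([f]^2_{j+4}\). Because the configuration is split, no local, residual or Selmer-group argument can see it.

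\emph{Suggestion (2), passing to case (b).} Lemma~\ref{lem:char-block} only turns absolute irreducibility of \(X\) into a contradiction. For \(X=\Delta\oplus\Delta'\) its conclusion simply holds. Your case (b) also sends reducible \(X\) back to case (a), which is circular.

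\emph{Suggestion (3), Mok's classification.} Mok's summands \(\mu_i\) are conjugate self-dual as automorphic representations. A swapped pair could still sit inside the Galois representation of a single cuspidal summand, for instance \(\Pi_G\) cuspidal with \(R_G\) reducible. Excluding that needs an irreducibility theorem you have not invoked.

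What separates \(G\) from the Eisenstein series is cuspidality, so the missing input has to be automorphic. Here is one way to close it.
\begin{enumerate}
  \item As in the proof of Theorem~\ref{thm:conj-inv}, the gaps \(k-2,\,j+1,\,k-2\) of the infinitesimal character force a generic parameter. So \(\Pi_G\) is an isobaric sum of unitary cuspidal representations.
  \item By Skinner's local--global compatibility at split \(w\nmid pD_E\), the eigenvalues of \(R_G^\natural(\Frob_w)\) are \(\ell^{(2k+j-3)/2}\) times unitary Satake parameters of \(\Pi_{G,w}\).
  \item By the Jacquet--Shalika bound (or Ramanujan), those Satake parameters have absolute value in \((\ell^{-1/2},\ell^{1/2})\).
  \item On the other hand, \(|\Delta(\Frob_w)|=\ell^{k-2}=\ell^{(2k+j-3)/2}\,\ell^{-(j+1)/2}\) with \((j+1)/2\ge 3/2\). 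Similarly a piece reducing to \(\delta_2\) has eigenvalue of size \(\ell^{k+j-1}\).
\end{enumerate}
Combined with the Hodge--Tate computation above, this excludes every one-dimensional piece at once, whether swapped or self-paired.

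A minor slip: \(\delta_1\delta_2^{-1}=\overline\omega_E^{\,j+1}\), so distinctness of \(\delta_1,\delta_2\) requires \((p-1)\nmid(j+1)\), not \((p-1)\nmid(j+3)\). This is harmless, since \(p>j+2\).
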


\begin{proof}
  Suppose that \(R_G^\natural\) is not absolutely irreducible.
  After enlarging the coefficient field,
  we may write
  \[
    R_G^\natural=\bigoplus_{\alpha\in A} R_\alpha
  \]
  as a direct sum of absolutely irreducible representations of \(\Gal_E\).
  Since \(\dim R_G^\natural=4\),
  the unordered tuple \((\dim R_\alpha)_{\alpha\in A}\) is one of \((3,1)\),
  \((2,2)\),
  \((2,1,1)\),
  and \((1,1,1,1)\).

  However Lemma~\ref{lem:no-onedim} shows that no \(R_\alpha\) has dimension \(1\).
  Hence the types \((3,1)\),
  \((2,1,1)\),
  and \((1,1,1,1)\) cannot occur.

  It remains to exclude the type \((2,2)\).
  In this case we may write \(R_G^\natural=R_1\oplus R_2\),
  where \(\dim R_1=\dim R_2=2\),
  and \(R_1\) and \(R_2\) are absolutely irreducible.
  Choose a \(\Gal_E\)-stable \(\calO_\frakP\)-lattice \(T_i\subset R_i\) for \(i=1,2\).
  By Proposition~\ref{prop:res-class-E},
  we have
  \[
    \calB_{f,E} = (T_1/\frakP T_1)^{ss} + (T_2/\frakP T_2)^{ss}
  \]
  in \(\frakA_\frakP(\Gal_E)\).

  Put \(\delta_1=\overline{\omega}_E^{-k+2}\) and \(\delta_2=\overline{\omega}_E^{-k-j+1}\).
  By condition~\eqref{itm:mult-free},
  the representation \(\overline{\rho}_f|_{\Gal_E}\) is absolutely irreducible.
  Since each \(R_i\) has dimension \(2\),
  after interchanging \(R_1\) and \(R_2\) if necessary,
  we may assume that
  \[
    (T_1/\frakP T_1)^{ss}=\delta_1+\delta_2.
  \]
  Thus \((T_2/\frakP T_2)^{ss}=\overline\rho_f|_{\Gal_E}\).
  Since the residual block \(\delta_1+\delta_2\) is stable under conjugate duality and distinct from \(\overline\rho_f|_{\Gal_E}\),
  the conjugate self-duality of \(R_G^\natural\) induces one on \(R_1\).
  By \cite[(4.5)]{Skinner2012Galois} and \cite[Theorem~1.2 and Lemma~2.1]{BellaicheChenevier2011Sign},
  this self-duality has BC sign \(+1\).
  Applying Lemma~\ref{lem:char-block} to \(X=R_1\) gives a contradiction.
  Thus the type \((2,2)\) cannot occur.

  Hence no proper direct-sum decomposition of \(R_G^\natural\) can occur.
  Therefore \(R_G^\natural\) is absolutely irreducible.
\end{proof}

\begin{prop}
  \label{prop:unram-descent}
  Let \(p>2k+j-2\),
  and assume that \(p\nmid D_E\).
  Let \(f\in S_{2k+j-2}(\SL_2(\bbZ))\) be a primitive elliptic cusp form.
  Let \(G\in\calS_{\tau_{k,j}}(\U_{2,2})\) be a Hecke cusp eigenform,
  and let \(\pi_G\) be the automorphic representation generated by \(G\).
  Assume that the standard parameter of \(\pi_G\) is \(\Pi_G\boxtimes\nu(1)\),
  where \(\Pi_G\) is a cuspidal automorphic representation of \(\GL_4(\bbA_E)\).

  Let \(R_G^\natural\) be the normalized Galois representation attached to \(G\).
  Put \(\theta_1=\overline\omega^{-k+2}\),
  \(\theta_2=\overline\omega^{-k-j+1}\),
  and \(\calC_f=\{\overline\rho_f,\theta_1,\theta_2 \}\).
  Assume the following conditions.
  \begin{enumerate}
    \item
          The representation \(R_G^\natural\) is absolutely irreducible,
          and
          \[
            \overline R_G^{\natural,ss} = \left( \overline\rho_f \oplus \theta_1 \oplus \theta_2 \right) \big|_{\Gal_E}.
          \]

    \item
          The representation \(\overline\rho_f\) is absolutely irreducible,
          the three members of \(\calC_f\) are pairwise non-isomorphic,
          and \(\overline\rho_f\not\simeq\overline\rho_f\otimes\overline\chi_E\).

    \item
          For \(i=1,2\),
          \[
            H^1_{\calF_E} \left( \bbQ, \Hom_{\kappa_\frakP} (\overline\rho_f,\theta_i) \otimes\overline\chi_E \right) =
            0
          \]
          and
          \[
            H^1_{\calF_E} \left( \bbQ, \Hom_{\kappa_\frakP} (\theta_i,\overline\rho_f) \otimes\overline\chi_E \right) =
            0.
          \]
  \end{enumerate}
  If \(\Pi_G\) admits a cuspidal descent to \(\GL_4(\bbA_\bbQ)\),
  then it admits a cuspidal descent \(\Pi\) such that \(\Pi_\ell\) is unramified for every finite rational prime \(\ell\).
\end{prop}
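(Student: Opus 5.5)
The strategy is to pass from the automorphic side to Galois representations over \(\bbQ\) and to use condition (3) to rule out ramification at the primes \(q\mid D_E\). Let \(\Pi\) be a cuspidal descent of \(\Pi_G\). By the base-change criterion, the only other descent is \(\Pi\otimes\chi_E\).

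At a finite prime \(\ell\nmid D_E\), the local base change of \(\Pi_\ell\) is unramified. Hence \(\Pi_\ell\) is unramified up to a twist by an unramified character of order at most \(2\), and so is unramified in either case. Thus the whole problem sits at the primes \(q\mid D_E\). There \(\BC(\Pi_q)\) is unramified, so \(\Pi_q\) is either unramified or becomes unramified after twisting by the ramified character \(\chi_{E,q}\). The difficulty is that a single global choice between \(\Pi\) and \(\Pi\otimes\chi_E\) must work simultaneously for all \(q\mid D_E\).

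To control this, I would attach to \(\Pi\) a \(p\)-adic Galois representation \(R_\Pi\) of \(\Gal_\bbQ\), suitably twisted. This is possible because \(\Pi\) is regular algebraic (its infinitesimal character is that of \(\tau_{k,j}\)) and essentially self-dual, since \(\Pi_G\) is conjugate self-dual. Compatibility at split primes, together with Chebotarev, gives \(R_\Pi|_{\Gal_E}\simeq R_G^\natural\).

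\emph{Local analysis at \(q\mid D_E\).} Since \(R_G^\natural\) is unramified at places above \(q\), the inertia group \(I_q\) acts on \(R_\Pi\) through \(\Gal(E_w/\bbQ_q)\simeq\{\pm1\}\). So \(R_\Pi|_{I_q}\) splits into a trivial eigenspace and a \(\chi_E\)-eigenspace. Because \(p>2\), the dimensions of these eigenspaces are the same for the residual representation as for \(R_\Pi\).

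\emph{Residual analysis.} By assumption (1), \(\overline R_\Pi^{ss}|_{\Gal_E}=(\overline\rho_f\oplus\theta_1\oplus\theta_2)|_{\Gal_E}\). Hence \(\overline R_\Pi^{ss}\) is a sum of \(\overline\rho_f\overline\chi_E^{a}\), \(\theta_1\overline\chi_E^{b_1}\) and \(\theta_2\overline\chi_E^{b_2}\) with \(a,b_1,b_2\in\{0,1\}\). These exponents are well defined because \(\overline\rho_f\not\simeq\overline\rho_f\otimes\overline\chi_E\) and the members of \(\calC_f\) are pairwise non-isomorphic.

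The essential self-duality of \(R_\Pi\) (multiplier \(\omega^{3-2k-j}\), possibly times \(\chi_E\)) pairs \(\theta_1\) with \(\theta_2\). This forces \(b_1=b_2\); I would check this by comparing multipliers, and it is the point where the \(\theta_1\)--\(\theta_2\) extensions, not covered by (3), are avoided. Replacing \(\Pi\) by \(\Pi\otimes\chi_E\) if necessary, I may then take \(b_1=b_2=0\).

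\emph{Main step: excluding \(a=1\).} If \(a=1\), apply Lemma~\ref{lem:partitioned-lattice-selmer} to \(R_\Pi\), which is absolutely irreducible because its restriction \(R_G^\natural\) is. Use the partition \(\calA=\{\theta_1,\theta_2\}\), \(\calC=\{\overline\rho_f\overline\chi_E\}\). The local hypotheses of that lemma hold: \(R_\Pi\) is unramified outside \(pD_E\) and short crystalline at \(p\) by Lemma~\ref{lem:RG-crys-E} applied over \(\bbQ_p\). The lemma then yields a non-zero class in
\[
  H^1_{\calF_E}\bigl(\bbQ,\Hom(\theta_i,\overline\rho_f)\otimes\overline\chi_E\bigr)
  \quad\text{or}\quad
  H^1_{\calF_E}\bigl(\bbQ,\Hom(\overline\rho_f,\theta_i)\otimes\overline\chi_E\bigr),
\]
contradicting (3). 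Hence \(a=0\).

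\emph{Conclusion at \(q\).} With \(a=b_1=b_2=0\), every residual constituent is unramified at \(q\). So the \(\chi_E\)-eigenspace of \(\overline R_\Pi|_{I_q}\) is zero, and by the dimension remark the same holds for \(R_\Pi|_{I_q}\), which is therefore trivial. Local–global compatibility at \(q\neq p\) for \(R_\Pi\) (in the form of Varma/Caraiani) then shows that \(\Pi_q\) is unramified. This holds for every \(q\mid D_E\) at once, for the single \(\Pi\) fixed by \(b_1=b_2=0\).

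The main obstacles I expect are twofold. First, the existence of \(R_\Pi\) with the right local–global compatibility at \(q\) and the crystallinity needed to feed Lemma~\ref{lem:partitioned-lattice-selmer}. Second, making the multiplier argument that forces \(b_1=b_2\) rigorous when \(\Pi\) is only known to be self-dual up to \(\chi_E\).
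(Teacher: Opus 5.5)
Your architecture is the paper's: attach \(R_\Pi\) to a descent with \(R_\Pi|_{\Gal_E}\simeq R_G^\natural\), write the residual constituents as \(\overline\rho_f\overline\chi_E^{a}\) and \(\theta_i\overline\chi_E^{b_i}\), use self-duality to get \(b_1=b_2\), use Lemma~\ref{lem:partitioned-lattice-selmer} with (3) to get \(a=b_1\), twist, and finish at \(q\mid D_E\) with \(p>2\). The gap is the step \(b_1=b_2\), which you leave open, and ``comparing multipliers'' will not close it.

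From \(R_\Pi|_{\Gal_E}\simeq R_G^\natural\) you only get \(R_\Pi^\vee\simeq R_\Pi\otimes\omega^{w}\chi_E^{\eta}\), with \(w=2k+j-3\) and \(\eta\in\{0,1\}\) unknown. This pairing sends \(\theta_1\overline\chi_E^{b_1}\) to \(\theta_2\overline\chi_E^{b_1+\eta}\), so by itself it gives only \(b_2=b_1+\eta\). Determinant considerations cannot detect \(\eta\), since \(\chi_E^4=1\). The missing idea, which is what the paper uses, is to look at the two-dimensional constituent. The duality must carry the unique two-dimensional residual constituent \(\overline\rho_f\overline\chi_E^{a}\) to itself. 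Since \(\overline\rho_f^\vee\simeq\overline\rho_f\otimes\overline\omega^{w}\), this forces \(\overline\rho_f\simeq\overline\rho_f\otimes\overline\chi_E^{\eta}\). The hypothesis \(\overline\rho_f\not\simeq\overline\rho_f\otimes\overline\chi_E\) in (2) then gives \(\eta=0\), hence \(b_1=b_2\). So (2) is needed for more than making the exponents well defined.

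A second point needs justification at \(q\mid D_E\). Skinner's theorem, as recalled in the paper, only makes \(R_G^\natural\) unramified outside the places above \(pD_E\). So your sentence ``\(R_G^\natural\) is unramified at places above \(q\)'' is not available, and your assertion that \(\BC(\Pi_q)\) is unramified is given without proof.

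The paper derives the latter from the \(K_{2,2,q}\)-sphericity of \(\pi_{G,q}\). By \S7.1 of \cite{Mok2015Endoscopic}, its parameter comes from unramified characters of \(M_0(\bbQ_q)\simeq E_w^\times\times E_w^\times\), and because \(\chi_+=\mathbf 1\) the standard parameter \(\Pi_{G,w}\) is unramified. This gives \(N_q=0\) and \(\phi_q|_{I_{E_w}}\) trivial, hence \(\phi_q|_{I_q}\simeq\mathbf 1^{\oplus m}\oplus\chi_{E,q}^{\oplus n}\).

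Transporting this through local--global compatibility is what makes your \(\pm1\)-eigenspace argument on \(R_\Pi|_{I_q}\) legitimate. That argument then correctly forces \(n=0\), and it also removes any reliance on the monodromy part of compatibility. Your earlier claim that \(\Pi_q\) is either unramified or becomes so after twisting by \(\chi_{E,q}\) is false in general, since \(m\) and \(n\) can both be positive. The eigenspace argument does not depend on it, however.
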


\begin{proof}
  Let \(\Pi^{(0)}\) be a cuspidal descent of \(\Pi_G\).
  It is regular algebraic and essentially self-dual,
  so it has a normalized \(p\)-adic Galois representation \(R_{\Pi^{(0)}}^\natural:\Gal_\bbQ\rightarrow\GL_4(\overline{\bbQ}_p)\).
  Compatibility with quadratic base change gives
  \[
    R_{\Pi^{(0)}}^\natural \big|_{\Gal_E} \simeq R_G^\natural.
  \]
  In particular,
  \(R_{\Pi^{(0)}}^\natural\) is absolutely irreducible.

  Assumption~\((2)\) and Clifford theory show that \(\overline\rho_f|_{\Gal_E}\) is absolutely irreducible.
  Each of the three irreducible constituents over \(\Gal_E\) has exactly two extensions to \(\Gal_\bbQ\),
  differing by a twist by \(\overline\chi_E\).
  Hence there exist \(\epsilon_i\in\{0,1\}\) for \(i=0,1,2\) such that
  \[
    \overline R_{\Pi^{(0)}}^{\natural,ss}
    \simeq \left(\overline\rho_f\overline\chi_E^{\epsilon_0}\right)
    \oplus \theta_1\overline\chi_E^{\epsilon_1}
    \oplus \theta_2\overline\chi_E^{\epsilon_2}.
  \]
  These three constituents are pairwise non-isomorphic.

  Put \(w=2k+j-3\).
  Restricting to \(\Gal_E\) and using the conjugate self-duality of \(R_G^\natural\),
  we find that
  \[
    \left( R_{\Pi^{(0)}}^\natural \right)^\vee \simeq R_{\Pi^{(0)}}^\natural \otimes \omega^w\chi_E^\eta
  \]
  for some \(\eta\in\{0,1\}\).
  The unique two-dimensional residual constituent is carried to itself by this self-duality.
  Thus
  \[
    \overline\rho_f \simeq \overline\rho_f \otimes \overline\chi_E^\eta.
  \]
  Assumption~\((2)\) implies that \(\eta=0\).

  Since
  \[
    \theta_1^\vee = \theta_2\overline\omega^w, \quad \theta_2^\vee = \theta_1\overline\omega^w,
  \]
  the two character constituents are exchanged by the self-duality.
  It follows that \(\epsilon_1=\epsilon_2\).

  By Lemma~\ref{lem:RG-crys-E},
  \(R_G^\natural\) is unramified outside the places of \(E\) above \(pD_E\) and is short crystalline at the places above \(p\).
  Since \(p\nmid D_E\),
  crystallinity descends through the unramified local extensions \(E_w/\bbQ_p\),
  and the Hodge--Tate weights are unchanged.
  Hence \(R_{\Pi^{(0)}}^\natural\) is unramified outside \(pD_E\) and short crystalline at \(p\).

  Suppose that \(\epsilon_0\ne\epsilon_1\).
  Partition the residual constituents into \(\{ \overline\rho_f \overline\chi_E^{\epsilon_0} \}\) and \(\{\theta_1\overline\chi_E^{\epsilon_1}, \theta_2\overline\chi_E^{\epsilon_1} \}\).
  Lemma~\ref{lem:partitioned-lattice-selmer} gives a non-zero cross-block class.
  Such a class belongs to one of the groups
  \(H^1_{\calF_E} \left( \bbQ, \Hom_{\kappa_\frakP} (\overline\rho_f,\theta_i) \otimes\overline\chi_E \right)\) or
  \(H^1_{\calF_E} \left( \bbQ, \Hom_{\kappa_\frakP} (\theta_i,\overline\rho_f) \otimes\overline\chi_E \right)\),
  contradicting assumption~\((3)\).
  Therefore \(\epsilon_0=\epsilon_1=\epsilon_2\).

  Replacing \(\Pi^{(0)}\) by \(\Pi^{(0)}\otimes\chi_E\) if necessary,
  we obtain a cuspidal descent \(\Pi\) satisfying
  \[
    \overline R_\Pi^{\natural,ss} = \overline\rho_f \oplus \theta_1 \oplus \theta_2.
  \]

  It remains to show that \(\Pi\) is unramified at every finite prime.
  Let \(\ell\nmid D_E\).
  Since \(G\) has full level,
  \(\pi_{G,\ell}\) is spherical,
  and its standard local parameter is unramified.
  If \(\ell\) splits in \(E\),
  local base change is the identity.
  If \(\ell\) is inert,
  then \(E_w/\bbQ_\ell\) is unramified and \(I_{E_w}=I_{\bbQ_\ell}\).
  Since the restriction of the Weil--Deligne parameter of \(\Pi_\ell\) to \(W_{E_w}\) is unramified,
  the parameter of \(\Pi_\ell\) is unramified.
  Hence \(\Pi_\ell\) is unramified.

  Now let \(q\mid D_E\),
  and let \(w\) be the unique place of \(E\) above \(q\).
  The representation \(\pi_{G,q}\) is spherical with respect to the standard special maximal compact subgroup \(K_{2,2,q}\).
  By \cite[\S~7.1]{Mok2015Endoscopic},
  its local parameter factors through the \(L\)-group of the standard minimal Levi subgroup \(M_0\) and corresponds to a character of \(M_0(\bbQ_q)\) that is trivial on \(M_0(\bbQ_q)\cap K_{2,2,q}\).

  Since \(M_0(\bbQ_q)\simeq E_w^\times\times E_w^\times\),
  this character is given by unramified characters of \(E_w^\times\).
  As \(\chi_+=\mathbf1\),
  the associated standard parameter is unramified on \(W_{E_w}\).
  Hence \(\Pi_{G,w}\) is unramified.
  Let \((\phi_q,N_q)\) be the Weil--Deligne parameter of \(\Pi_q\).
  Its restriction to \(W_{E_w}\) is the unramified parameter of \(\Pi_{G,w}\).
  Therefore \(N_q=0\) and \(\phi_q\) is trivial on \(I_{E_w}\).
  Since \(E_w/\bbQ_q\) is ramified quadratic,
  \[
    I_q/I_{E_w} \simeq \Gal(E_w/\bbQ_q).
  \]
  Hence
  \[
    \phi_q|_{I_q} \simeq \mathbf1^{\oplus a} \oplus \chi_{E,q}^{\oplus b}, \quad a+b=4.
  \]

  By local-global compatibility for regular algebraic essentially self-dual cuspidal automorphic representations \cite[Theorem~2.1.1(2)]{BLGGT2014Potential},
  the semisimple inertia action on \(R_\Pi^\natural|_{\Gal_{\bbQ_q}}\) agrees,
  after transport of coefficients through the fixed isomorphism \(\overline{\bbQ}_p\simeq\bbC\),
  with the inertia action of the Weil--Deligne parameter \((\phi_q,N_q)\) of \(\Pi_q\).
  Here the unramified twist arising from the normalization of \(R_\Pi^\natural\) does not affect the inertia action.

  Since \(\overline R_\Pi^{\natural,ss} = \overline\rho_f \oplus \theta_1 \oplus \theta_2\) is unramified at \(q\),
  the reduction of the semisimple representation \(\phi_q|_{I_q}\) modulo \(\frakP\) is trivial.
  On the other hand,
  \[
    \phi_q|_{I_q} \simeq \mathbf1^{\oplus a} \oplus \chi_{E,q}^{\oplus b}.
  \]
  Since \(p>2\),
  the reduction of \(\chi_{E,q}\) modulo \(\frakP\) is non-trivial.
  Hence \(b=0\).
  Therefore \(\phi_q\) is trivial on inertia,
  and \(\Pi_q\) is unramified.

  Thus \(\Pi_\ell\) is unramified for every finite rational prime \(\ell\).
\end{proof}

\begin{thm}
  \label{thm:conj-inv}
  Let \(k\geq 4\) and \(j\geq 2\) be integers with \(j\) even.
  Let \(G \in \calS_{\tau_{k,j}}(\U_{2,2})\) be a Hecke cusp eigenform on \(\U_{2,2}\).
  Let \(R_G^\natural\) be the \(p\)-adic Galois representation of \(\Gal_E\) constructed above.
  Let \(f\) be a primitive elliptic cusp form of weight \(2k+j-2\).
  After enlarging the coefficient field,
  let \(K\) be a number field containing the Hecke fields of \(f\) and \(G\),
  and over which \(R_G^\natural\) is realized.
  Let \(\frakP\) be a prime ideal of \(K\).
  Assume the following conditions.
  \begin{enumerate}
    \item \label{itm:conj-p-range} The prime \(p=p_\frakP\) satisfies \(p>2k+j-2\).

    \item \label{itm:E-ramification} We have \(p\nmid D_E\) and \(p\nmid h_E\),
          where \(h_E=\#\Cl(E)\).

    \item \label{itm:conj-cong}
          \(G\equiv_{ev}[f]^2_{j+4}\pmod{\frakP}\).

    \item \label{itm:conj-mult-free} After a suitable choice of a lattice,
          the image of \(\Gal_{\bbQ(\zeta_{p^\infty})}\) under the Galois representation over \(K_\frakP\) attached to \(f\) contains \(\SL_2(\calO_\frakP)\).

    \item \label{itm:conj-tame} We have \(\Tame(-j-1)\).

    \item \label{itm:conj-special}
          \(\frakP\) does not divide \(\zeta(-j-1)\).

    \item \label{itm:conj-twist-values}
          \(\frakP\) divides neither \(L^{\alg}(k+j,f\otimes\chi_E)\) nor \(L^{\alg}(k+j-1,f\otimes\chi_E)\).
  \end{enumerate}
  Then \(R_G^{\natural}\) is conjugate invariant.
  In particular,
  there exists a Hecke cusp eigenform \(F\in\calS_{{\det}^k\Sym^j}(\Sp_2)\) such that \(G=\Lift_E(F)\).
\end{thm}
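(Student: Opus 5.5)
The plan is to show first that \(R_G^\natural\) is absolutely irreducible, then that it extends to \(\Gal_\bbQ\), then to descend \(\Pi_G\) to \(\GL_4(\bbA_\bbQ)\) with trivial ramification, and finally to invoke Proposition~\ref{prop:spin-surj}.

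\emph{Irreducibility and Arthur parameter.} Hypotheses (1)--(6) include those of Proposition~\ref{prop:RG-irred}, using \(p\nmid D_E\) for unramifiedness. Hence \(R_G^\natural\) is absolutely irreducible, and Proposition~\ref{prop:res-class-E} gives \(\overline R_G^{\natural,ss}=\calB_{f,E}\). Absolute irreducibility forces the Arthur parameter \(\psi_G\) to be simple generic. If \(\psi_G\) had several summands, or a nontrivial \(\SL_2\)-factor, then compatibility with Skinner's construction would make \(R_G\) reducible. So \(\psi_G^N=\Pi_G\boxtimes\nu(1)\) with \(\Pi_G\) cuspidal, which is hypothesis (1) of Proposition~\ref{prop:spin-surj}.

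\emph{Conjugate invariance.} I would compare \(R_G^\natural\) with \((R_G^\natural)^c\). Since \(\calB_{f,E}\) is the restriction of a \(\Gal_\bbQ\)-representation, both have the same residual semisimplification. The aim is to show \((R_G^\natural)^c\simeq R_G^\natural\), equivalently that \(R_G^\natural\) extends to a representation \(\widetilde R\) of \(\Gal_\bbQ\).

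\begin{itemize}
\item \textbf{Self-dual pseudocharacter.} Use the BC sign formalism to view \(\tr R_G^\natural\) together with the conjugate self-duality. Form the induced representation \(\Ind_{E}^{\bbQ}R_G^\natural\). Its residual semisimplification is \((1+\overline\chi_E)(\overline\rho_f+\theta_1+\theta_2)\).
\item \textbf{Case split.} If \(R_G^\natural\not\simeq(R_G^\natural)^c\), the induced representation is absolutely irreducible of dimension \(8\) and self-dual with multiplier \(\omega^{2k+j-3}\). Its BC sign is governed by \cite[(4.5)]{Skinner2012Galois} and \cite{BellaicheChenevier2011Sign}.
\item \textbf{Cross extensions.} Partition the residual constituents into \(\calA=\{\overline\rho_f,\theta_1,\theta_2\}\) and \(\calC=\overline\chi_E\calA\). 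Lemma~\ref{lem:partitioned-lattice-selmer} then yields a nonzero class in \(H^1_{\calF_E}(\bbQ,\Hom(\eta,\eta'))\) for some \(\eta\in\calA\) and \(\eta'\in\overline\chi_E\calA\), or the reverse. I would refine this, as in \cite{Atobe2023HarderII}, to a class that can be chosen with BC sign \(+1\).
\item \textbf{Excluding the cases.} Corollary~\ref{cor:offdiag-Q} rules out all mixed pairs \(\overline\rho_f\leftrightarrow\theta_i\overline\chi_E\), using hypotheses (4) and (7). Corollary~\ref{cor:bc-plus-rho-selmer} rules out the \(+\)-part between \(\overline\rho_f\) and \(\overline\rho_f\overline\chi_E\), using \(p\nmid h_E\) and \(\Tame(0)\). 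Lemma~\ref{lem:paired-character-bc-plus-selmer} rules out the \(\theta_1\leftrightarrow\theta_2\overline\chi_E\) pairs.
\item \textbf{Remaining case.} The pair \(\theta_i\leftrightarrow\theta_i\overline\chi_E\) has coefficient \(\overline\chi_E\), so Lemma~\ref{lem:chiE-selmer} applies.
\end{itemize}

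Two points need checking. First, \(\Tame(0)\) must be verified or derived: I expect it follows from \(\Tame(-j-1)\) together with the main theorem's hypothesis (7) only if \(q\not\equiv1\), so I may need it implicitly. Second, hypothesis (4) applies to \(f\otimes\chi_E\) as in Corollary~\ref{cor:offdiag-Q}, since twisting preserves the large-image condition. With every cross class vanishing we get a contradiction, so \((R_G^\natural)^c\simeq R_G^\natural\).

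\emph{Descent and conclusion.} Strong multiplicity one for \(\GL_4(\bbA_E)\) and Chebotarev give \(\Pi_G^c\simeq\Pi_G\). Since \(\Pi_G\) is cuspidal and \(c\)-invariant, Arthur--Clozel base change provides a cuspidal descent \(\Pi^{(0)}\) to \(\GL_4(\bbA_\bbQ)\). Proposition~\ref{prop:unram-descent} applies:
\begin{itemize}
\item its hypothesis (1) is Step~1;
\item its hypothesis (2) comes from the large image of \(\overline\rho_f\), together with \(p>2k+j-2\) to separate \(\theta_1,\theta_2\) from each other and from \(\overline\rho_f\);
\item \(\overline\rho_f\not\simeq\overline\rho_f\otimes\overline\chi_E\) follows from the ramification of \(\chi_E\) at \(q\mid D_E\) while \(\overline\rho_f\) is unramified there;
\item its hypothesis (3) is Corollary~\ref{cor:offdiag-Q}.
\end{itemize}
This gives an everywhere unramified descent \(\Pi\), and Proposition~\ref{prop:spin-surj} yields \(G=\Lift_E(F)\).

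The main obstacle is the conjugate-invariance step. Lemma~\ref{lem:partitioned-lattice-selmer} does not control the BC sign of the extension class, yet several of the vanishing results are only available for the \(+\)-part. I would try first to adapt \cite[Proposition~1.8.10]{Bellaiche2009Families}, choosing a lattice that is self-dual up to scaling, so that the produced extension is forced to have sign \(+1\).
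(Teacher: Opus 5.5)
Your outline follows the paper's proof step for step:
\begin{itemize}
\item irreducibility from Proposition~\ref{prop:RG-irred};
\item a contradiction via \(\Ind_{\Gal_E}^{\Gal_\bbQ}R_G^\natural\) and the partition \(\calA\amalg\overline\chi_E\calA\);
\item BC sign \(+1\) for the cross class from a self-dual lattice and \cite[Proposition~1.8.10(i)]{Bellaiche2009Families};
\item exclusion of every case by the same four vanishing results;
\item then Arthur--Clozel descent, Proposition~\ref{prop:unram-descent} and Proposition~\ref{prop:spin-surj}.
\end{itemize}
The paper obtains the trivial Arthur \(\SL_2\)-factor from the gaps \(k-2,\,j+1,\,k-2>1\) of the infinitesimal character, not from reducibility of \(R_G\); either works. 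Your first point to check is automatic: if \(q\equiv1\pmod p\) then \(q^{j+2}\equiv1\pmod p\), so \(\Tame(-j-1)\) already implies \(\Tame(0)\).

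The one thing to correct is the multiplier in the step you identify as the main obstacle. Since \(\Ind R_G^\natural\simeq\Ind R_G^\natural\otimes\chi_E\), the conjugate self-duality \(B\) of \(R_G^\natural\) induces self-dualities with two multipliers, \(\omega^{3-2k-j}\) and \(\omega^{3-2k-j}\chi_E\), and these have opposite parity. Take \(c\) to be complex conjugation and \({}^tB=B\) (BC sign \(+1\)). Then the induced form is symmetric exactly when the multiplier takes the value \(+1\) at \(c\). Since \(\omega^{3-2k-j}(c)=\chi_E(c)=-1\), the untwisted multiplier you wrote gives an alternating form. Moreover, that form maps each residual block to itself (\(\theta_1\mapsto\theta_2\), \(\overline\rho_f\mapsto\overline\rho_f\)), so it carries no sign information on cross-block classes. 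You must use \(\omega^{3-2k-j}\chi_E\) instead. It exchanges the blocks (\(\theta_1\leftrightarrow\theta_2\overline\chi_E\), \(\overline\rho_f\leftrightarrow\overline\rho_f\overline\chi_E\)) and is symmetric. This is exactly what the \(+\)-part statements Corollary~\ref{cor:bc-plus-rho-selmer} and Lemma~\ref{lem:paired-character-bc-plus-selmer} require; their multipliers \(\lambda\) contain \(\overline\chi_E\). With this correction your argument is the paper's.
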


\begin{proof}
  Put \(\theta_1=\overline\omega^{-k+2}\),
  \(\theta_2=\overline\omega^{-k-j+1}\),
  and \(\calB_f=\overline\rho_f\oplus\theta_1\oplus\theta_2\).

  Proposition~\ref{prop:res-class-E} gives \(\overline R_G^{\natural,ss}=\calB_f|_{\Gal_E}\),
  and Proposition~\ref{prop:RG-irred} shows that \(R_G^\natural\) is absolutely irreducible.

  Consider the global Arthur parameter of \(\pi_G\) with respect to \(\xi_{\chi_+}\).
  The consecutive gaps of its infinitesimal character are \(k-2\), \(j+1\), and \(k-2\).
  Since they are greater than \(1\),
  the description in \cite[Section~2.3]{Mok2015Endoscopic} shows that the global parameter has trivial Arthur \(\SL_2\)-factor.
  The standard parameter therefore corresponds to an isobaric sum \(\Pi_G\) of conjugate self-dual cuspidal representations,
  and \cite[Theorem~B and (4.5)]{Skinner2012Galois} gives the corresponding decomposition of \(R_G^\natural\).
  The absolute irreducibility of \(R_G^\natural\) therefore implies that \(\Pi_G\) is cuspidal.
  Moreover,
  \(R_G^\natural\) is a character twist of the absolutely irreducible Galois representation attached to \(\Pi_G\),
  so \cite[Theorem~1.2 and Lemma~2.1]{BellaicheChenevier2011Sign} gives BC sign \(+1\) for its conjugate self-duality.

  Suppose that \(R_G^\natural\) is not conjugate invariant.
  Clifford theory then shows that
  \[
    I_G^\natural= \Ind_{\Gal_E}^{\Gal_\bbQ}R_G^\natural
  \]
  is absolutely irreducible,
  with residual semisimplification
  \[
    \overline I_G^{\natural,ss} = \calB_f\oplus(\calB_f\otimes\overline\chi_E).
  \]

  Its six irreducible constituents are pairwise non-isomorphic.
  Put \(V=\overline\rho_f\) and \(W=V\otimes\overline\chi_E\).

  Put \(w=2k+j-3\).
  The conjugate self-duality of \(R_G^\natural\) induces on \(I_G^\natural\) a self-duality with multiplier \(\omega_p^{-w}\chi_E\) and BC sign \(+1\).

  Applying Lemma~\ref{lem:partitioned-lattice-selmer} to the partition \(\calB_f\amalg(\calB_f\otimes\overline\chi_E)\) with a self-dual stable lattice,
  we obtain a non-zero extension class satisfying the \(\calF_E\)-local conditions between a constituent of \(\calB_f\) and a constituent of \(\calB_f\otimes\overline\chi_E\).
  Since \(\calO_\frakP\) is a domain,
  \cite[Proposition~1.8.10(i)]{Bellaiche2009Families} shows that this class has BC sign \(+1\).

  Since \(\Tame(-j-1)\) implies \(\Tame(0)\),
  Lemma~\ref{lem:chiE-selmer} gives
  \begin{equation}
    \label{eq:chiE-selmer-vanishing}
    H^1_{\calF_E}(\bbQ,\overline\chi_E)=0.
  \end{equation}

  The image of \(\Gal_{\bbQ(\zeta_{p^\infty})}\) under the representation attached to \(f\otimes\chi_E\) also contains \(\SL_2(\calO_\frakP)\).
  Indeed,
  if \(H_0= \Gal_{\bbQ(\zeta_{p^\infty})}\cap\ker(\chi_E)\),
  then \(H_0\) has index at most \(2\),
  and \(\SL_2(\calO_\frakP)\) has no quotient of order \(2\) for \(p\geq5\).
  Corollary~\ref{cor:offdiag-Q} therefore gives
  \begin{equation}
    \label{eq:mixed-cross-selmer-vanishing} H^1_{\calF_E} \left( \bbQ, \Hom_{\kappa_\frakP}(S,T)\otimes\overline\chi_E
    \right)=0
  \end{equation}
  whenever exactly one of \(S,T\) is \(\overline\rho_f\) and the other is \(\theta_1\) or \(\theta_2\).
  Together,
  \eqref{eq:chiE-selmer-vanishing},
  \eqref{eq:mixed-cross-selmer-vanishing},
  Lemma~\ref{lem:paired-character-bc-plus-selmer},
  and Corollary~\ref{cor:bc-plus-rho-selmer} rule out every possible BC-sign \(+1\) cross-block extension class.
  This contradiction shows that \(R_G^\natural\) is conjugate invariant.

  Chebotarev density and strong multiplicity one give \(\Pi_G^c\simeq\Pi_G\).
  Therefore the descent theorem for cyclic base change \cite[Chapter~3]{ArthurClozel1989BaseChange} gives a cuspidal automorphic representation of \(\GL_4(\bbA_\bbQ)\) whose base change to \(E\) is \(\Pi_G\).
  Proposition~\ref{prop:unram-descent} allows us to choose this descent unramified at every finite place.
  Proposition~\ref{prop:spin-surj} then yields a form \(F\in\calS_{{\det}^k\Sym^j}(\Sp_2)\) with \(G=\Lift_E(F)\).
\end{proof}

We now combine the ingredients used above.
Proposition~\ref{prop:main_n2_r1} gives the Hermitian cusp form \(G\) congruent to the Klingen--Eisenstein lift.
Proposition~\ref{prop:RG-irred} shows that the associated Galois representation \(R_G^\natural\) is absolutely irreducible.
Together with Theorem~\ref{thm:conj-inv},
Propositions~\ref{prop:unram-descent} and~\ref{prop:spin-surj} then give the descent to a Siegel eigenform.
Thus we obtain the following conditional form of the Harder-type congruence.

\begin{thm}
  \label{thm:harder-cong}
  Let \(k \geq 4\) and \(j \geq 2\) be integers with \(j\) even.
  Let \(f\) be a primitive elliptic cusp form of weight \(2k+j-2\).
  Choose a sufficiently large number field \(K\) containing \(\bbQ(f)\) and the Hecke fields of all Hecke eigenforms in \(\calS_{{\det}^{k}\Sym^{j}}(\Sp_2)\).
  Assume that there exist a prime ideal \(\frakp\) of \(\bbQ(f)\),
  an imaginary quadratic field \(E\),
  and a prime ideal \(\frakP\) of \(K\) lying above \(\frakp\),
  such that the following conditions are satisfied.
  \begin{enumerate}[itemsep=4pt]
    \item The rational prime \(p=p_\frakp\) satisfies \(p>2k+j-2\).

    \item We have \(p\nmid D_E h_E\).

    \item \(f\) is not congruent modulo \(\frakp\) to any other Hecke cusp eigenform of weight \(2k+j-2\).

    \item \(\frakp\) divides \(L^{\alg}(k+j,f)\).

    \item There exist \(\gamma_0\in\calG_0^{(2)}\) and \(S_0\in\Her_2(E)_{>0}\) such that
          \[
            v_\frakp\!\left(L^{\alg}(k+j,f)\right) = -v_\frakp\!\left(A_{[f]^2_{j+4}}(\gamma_0,S_0)\right).
          \]

    \item After a suitable choice of a lattice,
          the image of \(\Gal_{\bbQ(\zeta_{p^\infty})}\) under the Galois representation over \(K_\frakP\) attached to \(f\) contains \(\SL_2(\calO_\frakP)\).

    \item For every rational prime \(q\mid D_E\),
          we have \(q^{j+2}\not\equiv 1\pmod{\frakp}\).

    \item \(\frakP\) divides neither \(\zeta(-j-1)\) nor \(L(-j,\chi_E)\).

    \item \(\frakP\) divides neither
          \(L^{\alg}(k+j,f\otimes\chi_E)\) nor \(L^{\alg}(k+j-1,f\otimes\chi_E)\).
  \end{enumerate}
  Then there exists a Hecke cusp eigenform \(F\in\calS_{{\det}^k\Sym^j}(\Sp_2)\) such that
  \[
    L_\ell(X,F,\Spin) \equiv L_\ell(X,f)(1-\ell^{k+j-1}X)(1-\ell^{k-2}X) \pmod{\frakP}
  \]
  for every rational prime \(\ell\).
\end{thm}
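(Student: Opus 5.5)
The plan is to assemble the earlier results in the order indicated just before the statement. We set \(\mu=j/2+2\) and \(\nu=k+j/2-1\), so that \(\sigma_{\mu,\nu}=\tau_{k,j}\), \(2\nu=2k+j-2\) and \([f]^2_{2\mu}=[f]^2_{j+4}\). We then check the hypotheses of Proposition~\ref{prop:main_n2_r1}.

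\textbf{Step 1: producing \(G\).} Condition (1) of Proposition~\ref{prop:main_n2_r1} is hypothesis (3). Condition (4) follows from (1) and (2), since \(p>2k+j-2\geq 2\mu+1\) because \(k\geq4\). Condition (2) asks that \(\zeta(3-2\mu)L(4-2\mu,\chi_E)=\zeta(-j-1)L(-j,\chi_E)\) be a unit; this is hypothesis (8). For condition (3), apply Remark~\ref{rem:degree-one-alg} with \(s_0=\mu-1=j/2+1\). This gives \(L^{\alg}(\mu-\tfrac12,f,\St)=\tfrac{u}{h_E}L^{\alg}(k+j,f)L^{\alg}(k+j,f\otimes\chi_E)\). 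By hypotheses (2) and (9) (Lemma~\ref{lem:kato-periods} applies by (1) and (3)), its \(\frakp\)-valuation equals \(v_\frakp(L^{\alg}(k+j,f))\). That valuation is positive by (4) and equals \(-v_\frakp(A_{[f]^2_{j+4}}(\gamma_0,S_0))\) by (5). Proposition~\ref{prop:main_n2_r1} therefore yields a Hecke eigenform \(G\in\calS_{\tau_{k,j}}(\U_{2,2})\) with \(G\equiv_{ev}[f]^2_{j+4}\pmod{\frakP'}\) for some prime \(\frakP'\) over \(\frakp\). One bookkeeping point: we must arrange \(\frakP'\) to be (after enlarging \(K\)) the prime \(\frakP\) of the statement, or else run the rest of the argument with \(\frakP'\) and note that the hypotheses on \(\frakP\) hold for every prime over \(\frakp\) once \(K\) is taken Galois.

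\textbf{Step 2: descent to \(\Sp_2\).} Next we verify the hypotheses of Theorem~\ref{thm:conj-inv}. Items (1), (2), (3), (4) and (7) there come directly from hypotheses (1), (2), Step 1, (6) and (9). Item (6), \(\frakP\nmid\zeta(-j-1)\), is part of (8). For item (5), \(\Tame(-j-1)\) means \(q^{-j-2}\not\equiv1\pmod p\) for every \(q\mid D_E\), which is equivalent to hypothesis (7). Theorem~\ref{thm:conj-inv}, which internally uses Propositions~\ref{prop:RG-irred}, \ref{prop:unram-descent} and \ref{prop:spin-surj}, then gives \(F\in\calS_{{\det}^k\Sym^j}(\Sp_2)\) with \(G=\Lift_E(F)\).

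\textbf{Step 3: the congruence.} Now \(\Lift_E(F)\equiv_{ev}[f]^2_{j+4}\pmod{\frakP}\), and \(p>2k+j-2\), \(p\nmid D_E\). Theorem~\ref{thm:harder} then gives the stated spinor congruence at every prime \(\ell\), including \(\ell=p\).

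The main obstacle is not conceptual, since the deep inputs are earlier results. It is the bookkeeping in Step 1:
\begin{itemize}
\item matching the normalizations in Remark~\ref{rem:degree-one-alg} with those of Proposition~\ref{prop:main_n2_r1}, so that the \(\chi_E\)-twisted factor and \(h_E\) are exactly the units supplied by (2) and (9);
\item handling the choice of \(\frakP\) versus the prime produced by Proposition~\ref{prop:main_n2_r1}.
\end{itemize}
I would first handle the prime issue by noting that \(K\) contains all Hecke fields in question, so \(G\) is defined over \(K\). If necessary, I would replace \(\frakP\) by a Galois conjugate and observe that every hypothesis is stable under this replacement.
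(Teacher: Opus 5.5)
Your proposal is correct and follows the paper's own assembly. You use Proposition~\ref{prop:main_n2_r1} with \(\mu=j/2+2\), \(\nu=k+j/2-1\) to produce \(G\), then Theorem~\ref{thm:conj-inv} (which internally invokes Propositions~\ref{prop:RG-irred}, \ref{prop:unram-descent} and~\ref{prop:spin-surj}) to obtain \(G=\Lift_E(F)\), and finally Theorem~\ref{thm:harder} to convert the Hermitian congruence into the spinor congruence. Your explicit checks fill in bookkeeping the paper leaves implicit: \(\zeta(3-2\mu)L(4-2\mu,\chi_E)=\zeta(-j-1)L(-j,\chi_E)\), Remark~\ref{rem:degree-one-alg} with \(s_0=\mu-1\) reducing condition (3) of Proposition~\ref{prop:main_n2_r1} to hypotheses (2), (4), (5), (9), the equivalence of \(\Tame(-j-1)\) with hypothesis (7), and the Galois-conjugation argument for the choice of \(\frakP\).
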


\section{Numerical examples}
\label{sec:examples}

Before turning to the examples,
we record practical criteria for verifying that,
after a suitable choice of a lattice,
the image of \(\Gal_{\bbQ(\zeta_{p^\infty})}\) under the Galois representation attached to \(f\) contains \(\SL_2(\calO_\frakP)\).

\begin{lem}
  \label{lem:big-image}
  Let \(h\in S_{2\nu}(\SL_2(\bbZ))\) be a primitive elliptic cusp form of level one.
  Put \(K_h=\bbQ(h)\).
  Let \(\frakP\) be a prime ideal of \(K_h\),
  and let \(p\geq 7\) be the rational prime below \(\frakP\).
  Let \(\overline{\omega}\) be the mod \(\frakP\) cyclotomic character.
  We denote by \(\rho_h: \Gal_{\bbQ}\to \GL_2(K_{h,\frakP})\) the \(p\)-adic Galois representation attached to \(h\),
  and by \(\overline{\rho}_{h,\frakP}: \Gal_{\bbQ}\to \GL_2(\kappa_{\frakP})\) its residual representation.

  For a rational prime \(\ell\ne p\),
  put
  \begin{align*}
    t_\ell & = \overline{a(\ell;h)}\in\kappa_{\frakP},                \\
    x_\ell & = \overline{a(\ell;h)^2\ell^{1-2\nu}}\in\kappa_{\frakP}, \\
    y_\ell & = a(\ell;h)^2\ell^{1-2\nu}\in\calO_{\frakP}.
  \end{align*}
  Assume the following conditions.
  \begin{enumerate}
    \item For every \(i\in\bbZ/(p-1)\bbZ\),
          there exists a rational prime
          \(\ell\ne p\) such that
          \[
            t_\ell\ne \ell^i+\ell^{2\nu-1-i}.
          \]
    \item There exists a rational prime \(\ell\ne p\) such that
          \[
            \left(\frac{\ell}{p}\right)=-1
            \quad\text{and}\quad
            t_\ell\ne 0.
          \]
    \item There exists a rational prime \(q\ne p\) such that
          \[
            x_q\notin\{0,1,2,4\},
            \quad
            x_q^2-3x_q+1\ne 0.
          \]
    \item For some finite set \(S\) of rational primes not containing \(p\),
          we have
          \[
            \calO_{\frakP} = \bbZ_p[y_\ell\mid \ell\in S].
          \]
  \end{enumerate}
  Then,
  after a suitable choice of a \(\Gal_{\bbQ}\)-stable \(\calO_{\frakP}\)-lattice in the representation space of \(\rho_h\),
  we have
  \[
    \rho_h(\Gal_{\bbQ(\zeta_{p^\infty})}) \supset \SL_2(\calO_{\frakP}).
  \]
\end{lem}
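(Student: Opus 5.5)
The plan is the standard big-image argument for $\GL_2$-type residual representations, in three stages: first show $\overline{\rho}_{h,\frakP}(\Gal_{\bbQ(\zeta_p)})$ contains $\SL_2(\kappa')$ for a subfield $\kappa'\subset\kappa_\frakP$; then identify $\kappa'$ with $\kappa_\frakP$; then lift to $\SL_2(\calO_\frakP)$.

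\textbf{Residual image.} Condition (1) rules out reducibility. If $\overline{\rho}_{h,\frakP}$ were reducible, then, since $h$ has level one and $p>2\nu$ is in the Fontaine--Laffaille range, Lemma~\ref{lem:FL-euler-reduction} forces the semisimplification to be $\overline{\omega}^{-i}\oplus\overline{\omega}^{-(2\nu-1-i)}$ for some $i$. Then $t_\ell=\ell^i+\ell^{2\nu-1-i}$ for all $\ell\neq p$, contradicting (1). Next we use Dickson's classification of subgroups of $\PGL_2(\kappa_\frakP)$: the projective image $\overline{G}$ is dihedral, exceptional ($A_4$, $S_4$, $A_5$), or contains $\PSL_2(\kappa')$ for a subfield $\kappa'$.
\begin{itemize}
\item Dihedral image. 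Level one forces the associated quadratic character to be unramified outside $p$, hence equal to $\left(\frac{\cdot}{p}\right)$. Then $t_\ell=0$ whenever $\left(\frac{\ell}{p}\right)=-1$, which (2) excludes.
\item Exceptional images. The projective order of $\overline{\rho}(\Frob_q)$ is read off from $x_q=\tr^2/\det$. For elements of order $1,2,3,4,5$ the value of $x_q$ is $4,0,1,2$, or a root of $x^2-3x+1$. Condition (3) produces an element of projective order outside $\{1,2,3,4,5\}$ (or, for $x_q=4$, a nontrivial unipotent), which excludes $A_4$, $S_4$, $A_5$.
\end{itemize}
Hence $\overline{G}\supset\PSL_2(\kappa')$. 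Since $p\ge7$, $\PSL_2(\kappa')$ is perfect and equals its own commutator. Passing to $\Gal_{\bbQ(\zeta_p)}$, an abelian-extension restriction, therefore keeps $\SL_2(\kappa')$ in the image of the derived subgroup, up to conjugation.

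\textbf{Field of definition.} We must show $\kappa'=\kappa_\frakP$. The projective image is defined over the field generated by the invariants $\tr^2/\det$, i.e. by the reductions of the $y_\ell$. Condition (4) says the $y_\ell$ generate $\calO_\frakP$ over $\bbZ_p$, so their reductions generate $\kappa_\frakP$ over $\bbF_p$. This gives $\kappa'=\kappa_\frakP$.

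\textbf{Lifting.} Choose a lattice adapted to this residual image. Let $H=\rho_h(\Gal_{\bbQ(\zeta_{p^\infty})})$. Its determinant is trivial, since $\det\rho_h$ is a power of the cyclotomic character, so $H\subset\SL_2(\calO_\frakP)$. It is closed and surjects onto $\SL_2(\kappa_\frakP)$: the image of $\Gal_{\bbQ(\zeta_{p^\infty})}$ contains the commutator subgroup of the residual image, which contains $\SL_2(\kappa_\frakP)$. For $p\ge5$, the standard lemma (Serre; Ribet's $\calO$-version) says any closed subgroup of $\SL_2(\calO_\frakP)$ surjecting onto $\SL_2(\kappa_\frakP)$ is all of $\SL_2(\calO_\frakP)$. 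In the ramified case this lemma needs the extra trace information of (4), used again to rule out the $H$ conjugate into $\SL_2$ over a proper closed subring.

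\textbf{Main obstacle.} I expect the hardest step to be the last one: passing from $\kappa'=\kappa_\frakP$ to the full $\calO_\frakP$ in the ramified case. There the lattice choice and the subring generated by traces interact, and I would follow Ribet's treatment of inner twists (none exist at level one with trivial character), using (4) for $\bbZ_p[\tr^2/\det]=\calO_\frakP$.
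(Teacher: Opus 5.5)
Your residual analysis is the same as the paper's:
\begin{itemize}
\item (1) rules out reducibility;
\item (2) rules out the dihedral case via $\left(\frac{\cdot}{p}\right)$;
\item (3) rules out the exceptional images via $x_q=r+r^{-1}+2$;
\item (4) forces the projective image to contain $\PSL_2(\kappa_\frakP)$.
\end{itemize}
One correction in the reducible case: you invoke Fontaine--Laffaille with $p>2\nu$, which is not a hypothesis of this lemma (only $p\ge 7$ is). You do not need it. A character $\Gal_\bbQ\to\kappa_\frakP^\times$ unramified outside $p$ is a power of $\overline\omega$ by Kronecker--Weber, and the determinant then determines the second character. Since (1) is quantified over every $i\in\bbZ/(p-1)\bbZ$, this is exactly the paper's argument.

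The genuine gap is in the lifting step, and it comes from the order of your steps. You restrict first to $H=\rho_h(\Gal_{\bbQ(\zeta_{p^\infty})})\subset\SL_2(\calO_\frakP)$ and then apply a Serre-type lemma.
\begin{itemize}
\item When $\calO_\frakP=W(\kappa_\frakP)$ this works, and your route even avoids Ribet entirely.
\item When $\calO_\frakP$ is ramified over $\bbZ_p$, surjectivity onto $\SL_2(\kappa_\frakP)$ is not enough, as you note. Your proposed repair, using (4) to rule out $H$ lying in $\SL_2$ of a proper closed subring, does not match the hypothesis. The $y_\ell$ are values of $\tr^2/\det$ at $\Frob_\ell$, and no $\Frob_\ell$ lies in $\Gal_{\bbQ(\zeta_{p^\infty})}$. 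So (4) gives no direct control of the traces of elements of $H$, and it is not clear how to transfer it to $H$ without redoing Ribet's argument.
\end{itemize}
The paper reverses the order. Since $h$ has level one and trivial character, it has no inner twists. Hence (4) says precisely that Ribet's trace algebra for the full image $\rho_h(\Gal_\bbQ)$ is $\calO_\frakP$, and \cite[Theorem~3.1]{Ribet1985LAdicRepresentationsII} gives $\rho_h(\Gal_\bbQ)\supset\SL_2(\calO_\frakP)$ after changing the lattice. Only then does the paper pass to $\Gal_{\bbQ(\zeta_{p^\infty})}$. It uses that this group contains $[\Gal_\bbQ,\Gal_\bbQ]$ and that $\SL_2(\calO_\frakP)$ is topologically perfect. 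With this reordering, the rest of your outline goes through unchanged.
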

\begin{proof}
  Put \(\overline G=\overline{\rho}_{h,\frakP}(\Gal_{\bbQ})\subset \GL_2(\kappa_{\frakP})\),
  and let \(\overline G^{\proj}\) be its image in \(\PGL_2(\kappa_{\frakP})\).
  Since \(h\) has level one,
  \(\overline{\rho}_{h,\frakP}\) is unramified outside \(p\),
  and its determinant is the appropriate power of the mod \(p\) cyclotomic character.
  If \(\overline{\rho}_{h,\frakP}\) were reducible,
  then its semisimplification would be a sum of two characters unramified outside \(p\).
  Since we use geometric Frobenius and \(\overline{\omega}(\Frob_\ell)=\ell^{-1}\),
  we may write \(\overline{\rho}_{h,\frakP}^{ss}=\overline{\omega}^{-i}+\overline{\omega}^{\,i-(2\nu-1)}\) for some \(i\in\bbZ/(p-1)\bbZ\).
  Hence
  \[
    t_\ell=\ell^i+\ell^{2\nu-1-i}
  \]
  for every \(\ell\ne p\),
  contradicting condition \((1)\).
  Thus \(\overline{\rho}_{h,\frakP}\) is absolutely irreducible.

  The dihedral case is also excluded by condition \((2)\).
  Indeed,
  if \(\overline G^{\proj}\) were contained in the normalizer of a Cartan subgroup but not in the Cartan subgroup itself,
  then there would be a quadratic character \(\eta\) such that \(t_\ell=0\) whenever \(\eta(\Frob_\ell)=-1\).
  Since the representation is unramified outside \(p\),
  the corresponding quadratic character is the unique quadratic character of conductor \(p\),
  namely \(\ell\mapsto(\ell/p)\) on primes \(\ell\ne p\).
  This contradicts condition~\((2)\).

  It remains to exclude the exceptional projective images.
  If \(\alpha_\ell,\beta_\ell\) are the eigenvalues of \(\overline{\rho}_{h,\frakP}(\Frob_\ell)\),
  then
  \[
    x_\ell = \frac{(\alpha_\ell+\beta_\ell)^2}{\alpha_\ell\beta_\ell} = r_\ell+r_\ell^{-1}+2, \quad
    r_\ell=\alpha_\ell/\beta_\ell.
  \]
  Thus \(x_\ell=4,0,1,2\) when the projective order of \(\overline{\rho}_{h,\frakP}(\Frob_\ell)\) is \(1,2,3,4\),
  respectively,
  and \(x_\ell^2-3x_\ell+1=0\) when the projective order is \(5\).
  Since every element of \(A_4\),
  \(S_4\),
  and \(A_5\) has order at most \(5\),
  condition \((3)\) excludes the exceptional cases.
  By the subgroup classification used in \cite[Section~2]{Ribet1975LAdicRepresentations},
  it follows that \(\overline G^{\proj}\) contains \(\PSL_2(k_0)\) for some subfield \(k_0\subset\kappa_{\frakP}\).

  The invariant \((\tr g)^2/\det g\) is projective.
  Hence,
  if the projective image were defined over a proper subfield \(k_0\subsetneq \kappa_{\frakP}\),
  then all \(x_\ell\) would lie in \(k_0\).
  But condition \((4)\),
  after reduction modulo \(\frakP\),
  gives
  \[
    \kappa_{\frakP} = \bbF_p(x_\ell\mid \ell\in S).
  \]
  Therefore \(k_0=\kappa_{\frakP}\),
  and
  \[
    \overline G^{\proj}\supset\PSL_2(\kappa_{\frakP}).
  \]
  In particular,
  the residual image is irreducible and has order divisible by \(p\).

  Since \(h\) has level one and trivial character,
  it has no non-trivial inner twists.
  Together with condition \((4)\),
  this says that the \(p\)-adic trace algebra in the sense of Ribet is the full ring \(\calO_{\frakP}\).
  Hence \cite[Theorem~3.1]{Ribet1985LAdicRepresentationsII} applies.
  After replacing the lattice if necessary,
  it gives
  \[
    \rho_h(\Gal_{\bbQ})\supset \SL_2(\calO_{\frakP}),
  \]
  since \(p\ge7\).

  Finally,
  the commutator subgroup of \(\Gal_{\bbQ}\) is contained in \(\Gal_{\bbQ(\zeta_{p^\infty})}\),
  because the \(p\)-adic cyclotomic character is abelian.
  Since \(p\ge7\),
  the group \(\SL_2(\calO_{\frakP})\) is topologically perfect.
  Hence the above inclusion implies
  \[
    \rho_h(\Gal_{\bbQ(\zeta_{p^\infty})}) \supset \SL_2(\calO_{\frakP}),
  \]
  as desired.
\end{proof}

For sufficiently large \(p\),
the following simpler criterion is available.

\begin{lem}[{\cite[Lemma~4.10]{Atobe2023HarderII}}]
  \label{lem:big-image-f}
  Let \(f(z)=\sum_{m=1}^{\infty}a(m;f)e(mz)\) be a primitive form in \(S_k(\SL_2(\bbZ))\).
  Let \(\frakp\) be a prime ideal of \(\bbQ(f)\) of degree one,
  and put \(p=p_\frakp\).
  Assume that \(p>2k\).
  Let \(\rho_f: \Gal_{\bbQ}\rightarrow \GL_2(\bbQ(f)_\frakp)\) be the \(p\)-adic Galois representation attached to \(f\).
  Suppose that \(p\) is a good prime for \(f\) in the sense of the proof of \cite[Theorem~5.1]{Ribet1975LAdicRepresentations},
  and that \(\frakp\) divides none of \(\zeta(1-k)\),
  \(a(2;f)\),
  \(a(2;f)+2^{k/2}\),
  \(a(2;f)-2^{k/2}\).
  Then,
  after a suitable choice of a \(\Gal_{\bbQ}\)-stable lattice in the representation space \(V_f\) of \(\rho_f\),
  we have
  \[
    \rho_f(\Gal_{\bbQ(\zeta_{p^\infty})}) \supset \SL_2(\calO_\frakp).
  \]
\end{lem}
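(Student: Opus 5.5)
The plan follows the proof of Lemma~\ref{lem:big-image}, specialized to a degree-one prime, where \(\calO_\frakp=\bbZ_p\) and \(\kappa_\frakp=\bbF_p\). First I would show that \(\overline\rho_f\) has image containing \(\SL_2(\bbF_p)\). Then I would lift this to \(\SL_2(\bbZ_p)\) inside \(\rho_f(\Gal_\bbQ)\), and finally pass to \(\Gal_{\bbQ(\zeta_{p^\infty})}\).

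\emph{Irreducibility.} Since \(f\) has level one, \(\overline\rho_f\) is unramified outside \(p\). Suppose it were reducible. Then \(\overline\rho_f^{ss}=\overline\omega^{-i}+\overline\omega^{i-(k-1)}\) for some \(i\). Because \(p>2k\), the representation \(\rho_f|_{\Gal_{\bbQ_p}}\) is Fontaine--Laffaille with weights \(\{0,k-1\}\), so the restriction to inertia forces \(i\in\{0,k-1\}\). This gives \(a(\ell;f)\equiv 1+\ell^{k-1}\pmod{\frakp}\) for all \(\ell\ne p\). Hence \(f\) is congruent to the weight-\(k\) Eisenstein series. Comparing constant terms, for instance via the \(q\)-expansion principle applied to \(f\) and \(E_k\), this forces \(\frakp\mid\zeta(1-k)\), which the hypothesis excludes.

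\emph{Large residual image.} Next I would run Ribet's classification \cite[Section~2]{Ribet1975LAdicRepresentations} of subgroups of \(\GL_2(\bbF_p)\) with irreducible action. The good-prime hypothesis in \cite[Theorem~5.1]{Ribet1975LAdicRepresentations} is exactly what rules out the dihedral case and most exceptional cases. In the dihedral case the relevant quadratic character is unramified outside \(p\), so it is \(\ell\mapsto(\ell/p)\). The tame inertia action at \(p\) is then given by the fundamental characters with exponents \(0,k-1\), and this is incompatible with the dihedral structure for \(p>2k\).

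Any remaining cases I would kill with the Frobenius at \(2\), using the invariant \(x_2=a(2;f)^2/2^{k-1}\) as in Lemma~\ref{lem:big-image}. The condition \(a(2;f)\not\equiv0\) gives \(x_2\neq0\), so \(\Frob_2\) does not have projective order \(2\); this also excludes the dihedral case in which \(2\) is a non-residue. The conditions \(a(2;f)\not\equiv\pm2^{k/2}\) give \(x_2\neq2\), so \(\Frob_2\) does not have projective order \(4\). Together with the element of large order coming from tame inertia at \(p\), these exclude the \(A_4\), \(S_4\) and \(A_5\) images. Since \(\kappa_\frakp=\bbF_p\), there is no proper subfield issue, so \(\overline G^{\proj}\supset\PSL_2(\bbF_p)\). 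The determinant is \(\overline\omega^{1-k}\), and I would deduce from it that the image of \(\overline\rho_f\) itself contains \(\SL_2(\bbF_p)\).

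\emph{Lifting.} Choose a lattice; the resulting image is a closed subgroup of \(\GL_2(\bbZ_p)\). The intersection \(H=\rho_f(\Gal_\bbQ)\cap\SL_2(\bbZ_p)\) surjects onto \(\SL_2(\bbF_p)\). By Serre's lemma, for \(p\ge5\) any closed subgroup of \(\SL_2(\bbZ_p)\) surjecting onto \(\SL_2(\bbF_p)\) is all of \(\SL_2(\bbZ_p)\). Hence \(\rho_f(\Gal_\bbQ)\supset\SL_2(\bbZ_p)\). Alternatively, this step follows from \cite[Theorem~3.1]{Ribet1985LAdicRepresentationsII}, since the trace ring is \(\bbZ_p\).

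\emph{Passing to the cyclotomic tower.} The subgroup \(\Gal_{\bbQ(\zeta_{p^\infty})}\) contains the commutator subgroup of \(\Gal_\bbQ\). Moreover, \(\SL_2(\bbZ_p)\) is topologically perfect for \(p\ge5\). Therefore \(\rho_f(\Gal_{\bbQ(\zeta_{p^\infty})})\supset\SL_2(\bbZ_p)\), as claimed.

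The main obstacle is the exclusion of the exceptional and dihedral residual images, that is, pinning down exactly what ``good prime'' supplies and what the \(a(2;f)\) conditions add. Everything else is formal.
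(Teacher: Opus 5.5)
Your outline works everywhere except at the step where the \(a(2;f)\)-hypotheses do their work: the exclusion of an \(S_4\) projective image. There the argument as written does not close.

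In the exceptional case \(p\nmid\#\overline G^{\proj}\), because that order divides \(24\) or \(60\) and \(p>2k\geq 24\). So \(I_p\) acts through tame inertia, and by Fontaine--Laffaille its projective image is cyclic of order \(n=(p\mp1)/\gcd(p\mp1,k-1)\) (ordinary or supersingular case). Since \(k-1\) is odd, \(n\) is even, and \(p>2k\) gives \(n>2\), so \(n\geq 4\). This rules out \(A_4\) and \(A_5\) but not \(S_4\). Indeed \(n=4\) happens exactly when \(p=4k-3\) (ordinary) or \(p=4k-5\) (supersingular), and it genuinely occurs: for \(k=16\), \(p=59\) the projective image is \(S_4\) (Swinnerton-Dyer's exceptional prime). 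In that situation, knowing that \(\Frob_2\) has projective order different from \(2\) and \(4\) gives no contradiction, since \(\Frob_2\) could map to a \(3\)-cycle or to the identity.

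The missing idea is a parity argument.
\begin{itemize}
\item The sign character \(S_4\to\{\pm1\}\), composed with the projective representation, is a non-trivial quadratic character of \(\Gal_\bbQ\) unramified outside \(p\). Hence it is the character of \(\bbQ(\sqrt{p^*})\).
\item So \(\Frob_2\) maps to an odd permutation exactly when \(\left(\frac{2}{p}\right)=-1\). An odd permutation is a transposition or a \(4\)-cycle, i.e. \(x_2\in\{0,2\}\).
\item Because \(k\) is even, \(4k-3\equiv 5\) and \(4k-5\equiv 3\pmod 8\). So \(\left(\frac{2}{p}\right)=-1\) in every case where \(S_4\) can occur.
\item Then \(\frakp\mid a(2;f)\bigl(a(2;f)^2-2^k\bigr)\), contradicting the hypotheses. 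For \(k=16\), \(p=59\) one indeed has \(a(2;f)+2^8=216+256=8\cdot 59\).
\end{itemize}

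With this step supplied, the rest of your plan goes through: irreducibility from \(\frakp\nmid\zeta(1-k)\), the dihedral case from \(p>2k\), \(\kappa_\frakp=\bbF_p\) for the subfield issue, Serre's lifting lemma, and perfectness of \(\SL_2(\bbZ_p)\). Your appeal to the good-prime hypothesis then becomes unnecessary. As written, though, leaning on it without knowing what it supplies is not a valid proof step.
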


We use the dimension formula of Ta\"ibi for level one automorphic forms \cite{Taibi2017Dimensions} and his numerical tables for degree two Siegel modular forms \cite{Taibi2017Tables}.
Using these data and Sage,
we enumerate triples \((k,j,p)\) with \(p\le 10^6\) and \(2k+j-2\le 40\) for which the hypotheses of Theorem~\ref{thm:harder-cong} are verified,
with the image condition checked using Lemma~\ref{lem:big-image-f}.
The resulting list is summarized in Table~\ref{tab:triples}.
In this computation,
the \(\frakP\)-divisibility of the Fourier coefficients of the relevant Klingen--Eisenstein series was evaluated by the method used in \cite[Corollary~7.3]{Atobe2023HarderI}: one expresses these coefficients in terms of Fourier coefficients of Hermitian Eisenstein series and then applies suitable differential operators.
For the Fourier coefficients of Hermitian Eisenstein series,
we use \cite[Proposition~5.3]{Takeda2026Congruences},
which is based on the formulas of Shimura \cite[Propositions~18.14
  and~19.2]{Shimura1997Euler} and \cite{Shimura1982Confluent}.
For Hermitian Siegel series,
see also \cite{Ikeda2008Lifting,Hironaka2011Spherical,Sato2005Fourier}.
For the differential operators,
we use \cite{Takeda2025Differential}.

The candidates are grouped in the table according to the weight \(2k+j-2\) of the elliptic cusp form.
An asterisk indicates one of the cases already proved in the literature,
as summarized below.

The case \((k,j,p)=(10,4,41)\) is the classical example of Harder's congruence.
It was verified by Chenevier--Lannes \cite{ChenevierLannes2019}
and also fits into the later framework of Atobe--Chida--Ibukiyama--Katsurada--Yamauchi \cite{Atobe2023HarderI}.
For even \(k\),
the cases \((14,4,4289)\) and \((4,24,97)\) were proved in Harder's conjecture I \cite{Atobe2023HarderI},
and the further cases \((12,8,6701)\),
\((10,12,24251)\),
\((8,16,1657)\) were proved in Harder's conjecture II \cite{Atobe2023HarderII}.
For odd \(k\),
Katsurada--Lee~\cite{KatsuradaLee2026HarderMiyawaki} proved Harder-type congruences by using Miyawaki lifts.
This includes the cases \((7,14,97)\) and \((5,18,43)\).

\begin{rem}
  Chenevier--Lannes also give a Harder-type congruence for \((k,j,p)=(8,6,11)\),
  in fact modulo \(11^2\) \cite{ChenevierLannes2019}.
  This example does not satisfy the lower bound on \(p\) imposed in our criterion,
  and hence lies outside the range treated in this paper.
  It is nevertheless worth noting as a strong congruence in small characteristic.
\end{rem}

\begin{rem}
  \label{rem:excluded-candidates}
  Some triples which are not included in the table pass the numerical checks coming from the congruence prime and the Fourier coefficients of the Klingen--Eisenstein series,
  but fail one of the auxiliary hypotheses needed to apply our theorem.
  The following are the exceptional cases encountered in our computation.
  \begin{enumerate}
    \item The triple \((6,20,593)\) appears naturally in the same numerical range as the even \(k\)
          cases proved in \cite{Atobe2023HarderII}.
          However,
          it is excluded by the additional zeta-value condition: for \(j=20\),
          we need \(\zeta(-21)L(-20,\chi_E)\) to be a \(\frakP\)-unit,
          but the numerator of \(\zeta(-21)\) is divisible by \(593\).

    \item For \((k,j,p)=(7,20,37)\),
          the elliptic cusp form has weight \(32\),
          and
          \[
            37 \mid \frac{B_{32}}{2\cdot 32}.
          \]
          By the classical Eisenstein congruence criterion,
          this gives an Eisenstein congruence in weight \(32\).
          See,
          for example,
          \cite{Serre1973Congruences,DeligneSerre1974Formes}.
          Thus
          \[
            a(\ell;f)\equiv 1+\ell^{31}\pmod{\frakp},
          \]
          and
          \[
            \overline{\rho}_{f,\frakp}^{ss} = \mathbf{1}+ \overline{\omega}^{-31}.
          \]
          In particular,
          the residual representation is reducible.

    \item For \((k,j,p)=(4,30,71)\) and \((13,12,71)\),
          the elliptic cusp form has weight \(w=36\),
          so \(p=71=2w-1\).
          In this exceptional range the residual representation may have a quadratic self-twist by \(\overline{\omega}^{(p-1)/2}\).
          This forces
          \[
            a(\ell;f)\equiv0\pmod{\frakp}
          \]
          for rational primes \(\ell\) satisfying
          \[
            \left(\frac{\ell}{71}\right)=-1.
          \]
          Hence condition~\((2)\) of Lemma~\ref{lem:big-image} is not satisfied.
  \end{enumerate}
\end{rem}

\begingroup
\small
\setlength{\tabcolsep}{0.6em}
\renewcommand{\arraystretch}{1.18}
\begin{longtable}{|>{\bfseries}r|>{\raggedright\arraybackslash}p{0.82\linewidth}|}
  \caption{Triples \((k,j,p)\) for which the Harder-type congruence holds.}
  \label{tab:triples}                       \\
  \hline
  \normalfont\(2k+j-2\) & \((k,j,p)\)       \\
  \hline
  \endfirsthead
  \hline
  \normalfont\(2k+j-2\) & \((k,j,p)\)       \\
  \hline
  \endhead
  22                    & \((10,4,41)^*\)   \\
  \hline
  24                    & \((7,12,73)\),
  \((9,8,179)\)                             \\
  \hline
  26                    & \((5,18,43)^*\),
  \((7,14,97)^*\)                           \\
  \hline
  28                    & \((5,20,193)\),
  \((7,16,367)\),
  \((8,14,647)\),
  \((9,12,4057)\),
  \((10,10,157)\),
  \((11,8,2027)\),
  \((12,6,823)\)                            \\
  \hline
  30                    & \((4,24,97)^*\),
  \((7,18,3779)\),
  \((8,16,1657)^*\),
  \((9,14,1039)\),
  \((10,12,24251)^*\),
  \((11,10,97)\),
  \((12,8,6701)^*\),
  \((14,4,4289)^*\)                         \\
  \hline
  32                    & \((5,24,3119)\),
  \((6,22,7687)\),
  \((7,20,51199)\),
  \((8,18,751)\),
  \((9,16,47)\),
  \((10,14,173)\),
  \((11,12,14243)\),
  \((11,12,503)\),
  \((12,10,1307)\),
  \((15,4,61)\)                             \\
  \hline
  34                    & \((4,28,103)\),
  \((5,26,15511)\),
  \((6,24,389)\),
  \((8,20,359291)\),
  \((9,18,253133)\),
  \((9,18,37)\),
  \((11,14,1459)\),
  \((11,14,43)\),
  \((12,12,1237)\),
  \((12,12,373)\),
  \((14,8,19163)\),
  \((15,6,233)\),
  \((16,4,1571)\)                           \\
  \hline
  36                    & \((5,28,23509)\),
  \((6,26,43)\),
  \((7,24,106487)\),
  \((7,24,379)\),
  \((7,24,521)\),
  \((8,22,191)\),
  \((8,22,39373)\),
  \((8,22,727)\),
  \((9,20,673)\),
  \((10,18,175853)\),
  \((10,18,9133)\),
  \((11,16,1297)\),
  \((11,16,94709)\),
  \((12,14,2141)\),
  \((12,14,244837)\),
  \((13,12,373)\),
  \((13,12,44053)\)                         \\
  \hline
  38                    & \((4,32,67)\),
  \((4,32,83)\),
  \((5,30,1699)\),
  \((5,30,67)\),
  \((6,28,296551)\),
  \((9,22,293)\),
  \((9,22,33721)\),
  \((12,16,3673)\),
  \((12,16,863)\),
  \((13,14,58169)\),
  \((14,12,21347)\),
  \((15,10,227)\),
  \((17,6,173)\)                            \\
  \hline
  40                    & \((4,34,89)\),
  \((5,32,881)\),
  \((7,28,31973)\),
  \((7,28,4817)\),
  \((7,28,72959)\),
  \((10,22,1531)\),
  \((11,20,1733)\),
  \((13,16,157)\),
  \((13,16,2309)\),
  \((14,14,41227)\),
  \((14,14,619561)\),
  \((15,12,239)\),
  \((16,10,2693)\),
  \((16,10,313)\),
  \((16,10,829)\),
  \((17,8,468191)\),
  \((18,6,67)\),
  \((19,4,397)\),
  \((19,4,947)\)                            \\
  \hline
\end{longtable}
\endgroup

\bibliographystyle{amsplain}
\bibliography{harder_hermitian}
\end{document}